\newtheorem{lem}{Lemma}[section]
\newtheorem{thm}[lem]{Theorem}
\newtheorem{conj}[lem]{Conjecture}
\numberwithin{equation}{section}
\numberwithin{figure}{section}
\renewcommand{\leq}{\leqslant}
\renewcommand{\geq}{\geqslant}
\newcommand{\dimB}{\dim_{\rm B}}
\newcommand{\dimH}{\dim_{\rm H}}
\newcommand{\bA}{\mathbf{A}}
\newcommand{\bc}{\mathbf{c}}
\newcommand{\be}{\mathbf{e}}
\newcommand{\Bf}{\mathbf{f}}
\newcommand{\bp}{\mathbf{p}}
\title{Some new classes of directed graph IFSs}     
\author{Graeme Boore }
\begin{document}

\maketitle

\begin{abstract}
It has been shown, see \cite{Paper_GCB_KJF}, that certain $2$-vertex directed graph iterated function systems (IFSs), defined on the unit interval  and satisfying the convex strong separation condition (CSSC), have attractors with components that are not standard IFS attractors where the standard IFSs may be with or without separation conditions. The proof required the multiplicative rational independence of parameters and the calculation of Hausdorff measure. In this paper we present a proof which does not have either of these requirements and so we identify a whole new class of $2$-vertex directed graph IFSs. 

We extend this result to $n$-vertex ($n\geq 2$, CSSC) directed graph IFSs, defined on the unit interval, with no effective restriction on the form of the associated directed graph, subject only to a condition regarding level-$1$ gap lengths. 

We also obtain a second result for $n$-vertex ($n\geq 2$, CSSC) directed graph IFSs, defined on the unit interval, which does require the calculation of Hausdorff measure.
\end{abstract}

\section{Introduction}\label{one}
In this paper we take some first steps towards a classification of (self-similar) directed graph IFSs defined on the unit interval. Directed graph IFSs, also known as graph directed IFSs, are one of the largest classes of deterministic IFSs. Probabilistic IFSs are often constructed by introducing random elements into the machinery of deterministic IFSs, see for example \cite{Book_Barnsley2}, so these results should be of interest across the whole range of IFS theory.

The Cantor set, the Sierpi\'nski triangle and the Menger sponge are well known examples of fractal sets  and each of them can be defined as the attractor of a (self-similar) $1$-vertex directed graph IFS, where the defining contractions are contracting similarities. For this reason it is convenient to use \emph{standard IFS} to mean a (self-similar) $1$-vertex directed graph IFS. Also we will often write \emph{$n$-vertex IFS} as a shortening of (self-similar) $n$-vertex directed graph IFS. 
 
Any $n$-vertex IFS determines a unique ($n$-component) attractor under the terms of the Contraction Mapping Theorem so it is natural to try and classify them by distinguishing between the components of their attractors. This we do in Theorems \ref{P2M} and \ref{P2Q} where we identify many new families of $n$-vertex ($n\geq 2$, CSSC) IFSs, defined on the unit interval, whose attractors have components that are not standard IFS attractors, where the standard IFS may be with or without separation conditions, overlapping or otherwise.  These results considerably extend those of \cite{Paper_GCB_KJF} where it was established that standard IFSs are in fact a proper subclass of  directed graph IFSs. Overall the work of this paper means that we can expect most $n$-vertex ($n\geq 2$, CSSC) IFSs to have attractors with components that are not standard IFS attractors. In particular the categorical nature of Theorem \ref{P2M} suggests that a precise classification is a real possibility. We discuss ways in which further progress may be made in Section \ref{six}. 

These results are also of interest because they provide information about properties that standard overlapping IFS attractors don't have. This is useful because apart from results about their Hausdorff dimension, see for example \cite{Paper_KJF4}, little is known about the structure of the attractors of overlapping IFSs in general. 
%trim=left bottom right top (also need clip).
\begin{figure}[htb]
\begin{center}
\includegraphics[trim = 10mm 185mm 10mm 15mm, clip, scale =0.7]{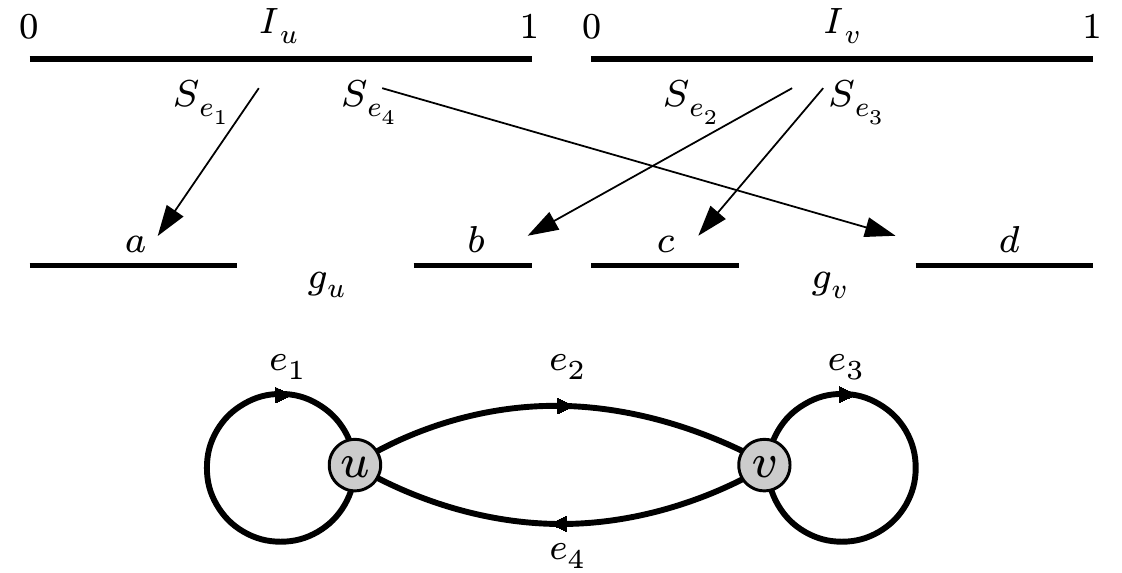}
\end{center}
\caption{A class of $2$-vertex directed graph IFSs defined on the unit interval, the similarities $S_{e_1},\, S_{e_2},\, S_{e_3}$ and $S_{e_4}$ do not reflect  and $\left\{g_u,g_v,a,b,c,d\right\}\subset \mathbb{R^+}$.}
\label{P22VertexUnitInterval}
\end{figure}

Theorem \ref{P2M} applies to one of the simplest types of $2$-vertex IFSs that can be defined on the unit interval as illustrated in Figure \ref{P22VertexUnitInterval}. The attractor of these systems consists of two non-empty compact sets, one at each vertex, which we write as $(F_u, F_v)$. Two of the results we proved in \cite{phdthesis_Boore,Paper_GCB_KJF} about these $2$-vertex IFS attractors are as follows, see \cite[Theorem 3.4.7, Theorem 3.5.8]{phdthesis_Boore}  or \cite[Theorem 4.6, Theorem 7.4]{Paper_GCB_KJF}. 
	
\begin{thm}
\label{2GthmN}
For the $2$-vertex IFS of Figure \ref{P22VertexUnitInterval} with attractor $(F_u, F_v)$ and $s=\dimH F_u$ $=\dimH F_v$, suppose that the following conditions hold
\begin{equation*}
\textup{(1)}  \quad \frac{1-a^s}{b^s} \leq 1, \quad \quad  \textup{(2)}  \quad \frac{(1-b)(1-a^s)}{ba^s}\geq 1.
\end{equation*}
Then
\begin{equation*}
\mathcal{H}^s(F_u) = 1 \quad \textrm{and} \quad \mathcal{H}^s(F_v)=\frac{1-a^s}{b^s}.
\end{equation*}
\end{thm}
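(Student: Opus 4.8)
The plan is to reduce the statement to computing $\mathcal{H}^s(F_u)$, and then to show this value is exactly $1$ by sandwiching it with the natural self-similar measure; the two hypotheses will be needed only for the lower sandwich. First I would record what the Mauldin--Williams framework gives for this $2$-vertex IFS. Let $a,b$ be the contraction ratios of the two maps $S_{e_1},S_{e_2}$ at $u$ (with $e_1$ the loop at $u$ and $e_2$ the edge $u\to v$), and $c,d$ those of the two maps at $v$ (with $e_3$ the edge $v\to u$ and $e_4$ the loop at $v$). Because the similarities are orientation preserving and the CSSC holds, the level-$1$ pieces inside each $F_i$ have disjoint convex hulls, $0<\mathcal{H}^s(F_i)<\infty$, and $\mathcal{H}^s$ is additive over them, so $\mathcal{H}^s(F_u)=a^s\mathcal{H}^s(F_u)+b^s\mathcal{H}^s(F_v)$ and $\mathcal{H}^s(F_v)=c^s\mathcal{H}^s(F_u)+d^s\mathcal{H}^s(F_v)$. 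Since $s=\dimH F_u=\dimH F_v$, the matrix with rows $(a^s,b^s)$ and $(c^s,d^s)$ has spectral radius $1$, which forces $(1-a^s)(1-d^s)=(bc)^s$; the first relation then gives $\mathcal{H}^s(F_v)=\tfrac{1-a^s}{b^s}\mathcal{H}^s(F_u)$, so it suffices to prove $\mathcal{H}^s(F_u)=1$. I would also bring in the natural self-similar probability measures $(\mu_u,\mu_v)$ on $(F_u,F_v)$: their edge weights $r(e)^s\mathcal{H}^s(F_{t(e)})/\mathcal{H}^s(F_{s(e)})$ are ratios of the $\mathcal{H}^s(F_i)$, hence independent of the unknown normalisation, they sum to $1$ at each vertex exactly because $(1-a^s)(1-d^s)=(bc)^s$, and $(\mu_u,\mu_v)=\bigl(\mathcal{H}^s|_{F_u}/\mathcal{H}^s(F_u),\,\mathcal{H}^s|_{F_v}/\mathcal{H}^s(F_v)\bigr)$. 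The key bookkeeping point is that the weight $1-a^s$ on $e_2$ and the weight $\tfrac{(bc)^s}{1-a^s}$ on $e_3$ multiply to $(bc)^s=r(e_2)^sr(e_3)^s$, so along any closed path at $u$ the product of weights equals the product of the $r(e)^s$; hence a cylinder whose path runs from $u$ to $u$ has $\mu_u$-mass $|I|^s$, where $I$ is its convex hull (these hulls are pairwise disjoint, so the cylinder is exactly $F_u\cap I$), while a cylinder whose path runs from $u$ to $v$ has $\mu_u$-mass $\tfrac{1-a^s}{b^s}|I|^s$.

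For the upper bound $\mathcal{H}^s(F_u)\le 1$, which uses neither hypothesis, I would note that every $x\in F_u$ whose coding path visits $u$ infinitely often lies in convex hulls of $u$-to-$u$ cylinders of diameters tending to $0$, each an interval $I$ with $\mu_u(I)=|I|^s$, so $\limsup_{I\ni x}\mu_u(I)/|I|^s\ge 1$ there. The remaining points, whose paths are eventually constant at $v$, form a countable set $E$ (finite images of a single fixed point), so $\mathcal{H}^s(E)=0$. The standard density theorem for Hausdorff measure on $\mathbb{R}$, applied on $F_u\setminus E$, then yields $\mathcal{H}^s(F_u)=\mathcal{H}^s(F_u\setminus E)\le\mu_u(F_u)=1$.

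The substance of the proof is the lower bound $\mathcal{H}^s(F_u)\ge 1$: by the mass distribution principle it is enough to prove $\mu_u(I)\le|I|^s$ for all intervals $I$, together with the companion bound $\mu_v(J)\le|J|^s$ for all intervals $J$. Using that the maps are orientation preserving together with the CSSC, I would reduce as follows: an interval meeting only one level-$1$ piece may be shrunk to its intersection with that piece's convex hull without decreasing the ratio $\mu_u(I)/|I|^s$; an interval inside one such hull is, by self-similarity, equivalent to an interval for $F_u$, or for $F_v$ at the cost of the factor $\tfrac{1-a^s}{b^s}\le 1$, which is hypothesis (1); and what remains are the intervals straddling a level-$1$ gap, for which one writes $\mu_u(I)$ as the sum of its contributions from the two pieces and optimises over the two endpoints. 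Among these, the binding configurations turn out to be the convex hull of the piece $S_{e_2}(F_v)$, whose density ratio is $\tfrac{1-a^s}{b^s}$ and so is $\le 1$ exactly by hypothesis (1), and the family of intervals consisting of all of $S_{e_1}(F_u)$ together with a nested sequence of sub-cylinders of $S_{e_2}(F_v)$ shrinking towards the gap, whose density ratio equals $1$ when the interval degenerates to $[0,1]$ and stays $\le 1$ off that point precisely when $(1-b)(1-a^s)\ge ba^s$, i.e.\ hypothesis (2).

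I expect the main obstacle to be making this last reduction rigorous in the face of its self-reference, since the bound for $F_u$ refers to the bound for $F_v$ and conversely. The way to handle it is to set $M_u=\sup_I\mu_u(I)/|I|^s$ and $M_v=\sup_J\mu_v(J)/|J|^s$, to show these suprema are attained (upper semicontinuity over a compact family of normalised interval positions, the $F_i$ being compact), and then to run a bootstrapping argument off the attained extremal intervals, the CSSC and the identity $(1-a^s)(1-d^s)=(bc)^s$, concluding $M_u=M_v=1$. Granting this, $\mathcal{H}^s(F_u)\ge\mu_u(F_u)=1$, so $\mathcal{H}^s(F_u)=1$, and therefore $\mathcal{H}^s(F_v)=\tfrac{1-a^s}{b^s}$, as required.
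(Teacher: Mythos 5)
Your overall architecture is the right one, and it is essentially the strategy of the proof this paper cites for Theorem \ref{2GthmN} (the result is imported from Boore--Falconer, where it is proved exactly by combining the linear relations $\mathcal{H}^s(F_u)=a^s\mathcal{H}^s(F_u)+b^s\mathcal{H}^s(F_v)$, an upper bound via disjoint cylinder covers, and a mass-distribution lower bound in which conditions (1) and (2) control gap-straddling intervals). However, two steps as written would fail. First, the companion bound $\mu_v(J)\le|J|^s$ for the \emph{normalised} measure $\mu_v=\mathcal{H}^s|_{F_v}/\mathcal{H}^s(F_v)$ is false whenever $(1-a^s)/b^s<1$: by the mass distribution principle it would give $\mathcal{H}^s(F_v)\ge\mu_v(F_v)=1$, contradicting the very conclusion $\mathcal{H}^s(F_v)=(1-a^s)/b^s$ you are trying to prove. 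The correct pair of targets is $\mu_u(I)\le|I|^s$ together with $\mu_v(J)\le\frac{b^s}{1-a^s}\,|J|^s$ --- equivalently, work with the unnormalised restrictions and show $\mathcal{H}^s|_{F_u}(I)\le|I|^s$ and $\mathcal{H}^s|_{F_v}(J)\le|J|^s$; then the factor $(1-a^s)/b^s$ picked up on descending into a $u\to v$ cylinder exactly cancels, and the scheme is internally consistent. Second, the heart of the lower bound is asserted rather than proved: ``the binding configurations turn out to be'' the hull of $S_{e_2}(F_v)$ and the family $[0,\,1-b+bx]$. Even granting that conditions (1) and (2) handle exactly those two families (they do: (2) is equivalent to $a^s+b\le1$, which is precisely what makes $a^s+(1-a^s)x^s\le(1-b+bx)^s$ hold on $(0,1)$), you have said nothing about intervals straddling the level-$1$ gap of $F_v$ --- these feed into $\mu_u$ through every $u\to v$ cylinder and their geometry involves $c,d,g_v$, which do not appear in (1) or (2) --- nor about the one-sided extremal quantities (suprema of $\mu_u([w,1])/(1-w)^s$, $\mu_v([0,z])/z^s$, etc.) to which any two-sided gap-straddling interval reduces. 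Showing that this intertwined system of extremal problems closes up under (1) and (2) alone is the entire technical content of the theorem, and the ``bootstrapping off attained extremal intervals'' is where it must happen; as proposed it is a placeholder.

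Two smaller points. Your upper bound invokes the density theorem with non-centred intervals; the centred-ball version costs a factor $2^s$, so instead cover $F_u\setminus E$ directly by the maximal pairwise disjoint $u$-to-$u$ cylinders of diameter less than $\delta$, whose hulls $I_j$ satisfy $\sum_j|I_j|^s=\sum_j\mu_u(I_j)\le1$, and let $\delta\to0$; this gives $\mathcal{H}^s(F_u)\le1$ cleanly and, as you say, unconditionally. Also note you have swapped $c$ and $d$ relative to Figure \ref{P22VertexUnitInterval} ($e_3$ is the loop at $v$ with ratio $c$, $e_4$ the edge to $u$ with ratio $d$), so your eigenvalue relation should read $(1-a^s)(1-c^s)=(bd)^s$ in the paper's notation; this is harmless for the statement, whose hypotheses and conclusion involve only $a$, $b$ and $s$, but should be fixed for consistency.
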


\begin{thm}
\label{2GthmU}
For the $2$-vertex IFS of Figure \ref{P22VertexUnitInterval} with attractor $(F_u, F_v)$ suppose that conditions (1) and (2) of  Theorem \ref{2GthmN} hold, so that $\mathcal{H}^s(F_u)=1$, and suppose also that the set $\left\{g_u,g_v,a,b,c,d\right\}\subset \mathbb{R^+}$ is multiplicatively rationally independent. 

Then $F_u$ is not the attractor of any standard IFS, defined on $\mathbb{R}$, with or without separation conditions.
\end{thm}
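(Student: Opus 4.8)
The plan is to argue by contradiction. Suppose $F_u=\bigcup_{i=1}^{m}T_i(F_u)$ for similarities $T_1,\dots,T_m$ of $\mathbb{R}$ with ratios $0<r_i<1$ (reflections allowed). The first move is to upgrade this to a \emph{separated} description of $F_u$. By Theorem~\ref{2GthmN} we have $\mathcal{H}^s(F_u)=1$, so $F_u$ is a self-similar set with $0<\mathcal{H}^s(F_u)<\infty$; combining this with Schief's separation theorem (applied, after passing to a minimal generating subfamily, to the hypothetical IFS for $F_u$) lets us assume that $\{T_i\}$ satisfies the open set condition with open set $(0,1)=\mathrm{int}\,\mathrm{conv}(F_u)$, so that the intervals $T_i([0,1])$ of lengths $r_i$ have pairwise disjoint interiors and $\sum_i r_i^{\,s}=1$. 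Note $F_u\neq[0,1]$ because of the level-$1$ gap of length $g_u$, and reflections are harmless from here on since they preserve lengths.

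Next I would extract a metric invariant of $F_u$ and compute it two ways. Let $\mathcal{D}$ be the set of lengths of the bounded components of $[0,1]\setminus F_u$. From the separated standard IFS one reads off, by induction on cylinder depth, that
\[
\mathcal{D}=\bigl\{\,r_{i_1}\cdots r_{i_k}\,\ell \;:\; k\geq 0,\ \ell\in L\,\bigr\},
\]
where $L$ is the finite set of lengths of the level-$1$ gaps $[0,1]\setminus\bigcup_i T_i([0,1])$; equivalently, passing to logarithms, $\log\mathcal{D}$ is a finite union of translates of the additive sub-semigroup of $(-\infty,0]$ generated by $\log r_1,\dots,\log r_m$. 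On the other hand, from the directed-graph IFS of Figure~\ref{P22VertexUnitInterval} one gets
\[
\mathcal{D}=\bigl\{\,g_u\,\pi(p):p\ \text{a walk }u\to u\,\bigr\}\ \cup\ \bigl\{\,g_v\,\pi(p):p\ \text{a walk }u\to v\,\bigr\},
\]
where $\pi(p)$ is the product of the contraction ratios of the edges of $p$. Crucially, the multiplicative rational independence of $\{g_u,g_v,a,b,c,d\}$ makes these two families disjoint and makes distinct edge-exponent vectors give distinct lengths, so the underlying semigroup combinatorics is recorded faithfully.

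The contradiction I am aiming for is that the second description of $\mathcal{D}$ is incompatible with the first. A walk $u\to u$ may loop arbitrarily many times at $v$ before returning, contributing a first-return factor $b\,d^{k}c$ for every $k\geq 0$, so the monoid of walk-products $\{\pi(p):p\ \text{a walk }u\to u\}$ is \emph{not finitely generated}; translating to logarithms and using the rational independence to exclude all numerical coincidences, one shows that $\log\mathcal{D}$ cannot be a finite union of translates of a finitely generated sub-semigroup of $\mathbb{R}$, contrary to the conclusion of the previous paragraph. Making this precise is the main obstacle, because the \emph{groups} generated on the two sides both have finite rank, so one must argue at the level of the semigroup/positivity structure: pass to the rank-$6$ lattice $\langle\log g_u,\log g_v,\log a,\log b,\log c,\log d\rangle$, which forces every $r_i$ and every $\ell\in L$ to be a monomial in $g_u,g_v,a,b,c,d$, then slice $\mathcal{D}$ by the exponents of $\log g_u,\log g_v$ and of $\log d$ and exploit the monoid-closure property of the generators $\log r_i$ to produce a nontrivial $\mathbb{Q}$-linear relation among $\log g_u,\log g_v,\log a,\log b,\log c,\log d$, contradicting the hypothesis.

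A supplementary ingredient that the same circle of ideas supplies, and which I expect to streamline this endgame, is a density comparison. The directed-graph structure gives clean sub-copies of $F_v$ inside $F_u$, e.g.\ $F_u\cap\mathrm{conv}(S_{e_2}(F_v))=S_{e_2}(F_v)$, with $\mathcal{H}^s$-to-(diameter)$^s$ ratio $(1-a^s)/b^s$, which is $\leq 1$ by condition~(1) and (using the rational independence, exactly as in the gap comparison) is $<1$; iterating along the $v$-loop produces such copies at scales tending to $0$. By contrast, for the separated standard IFS every cylinder piece satisfies $\mathcal{H}^s\!\bigl(F_u\cap T_w([0,1])\bigr)=\mathcal{H}^s(T_w(F_u))=r_w^{\,s}=\bigl|T_w([0,1])\bigr|^{s}$, i.e.\ ratio exactly $1$ at all scales. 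Tracking this mismatch near the right-hand endpoint $1$ of $F_u$ — where the directed-graph IFS makes $F_u$ locally a scaled copy of $F_v$ while the standard IFS makes it a scaled copy of $F_u$, and where independence forces $F_u\not\sim F_v$ — gives an independent route to the same incompatibility, and is where the Hausdorff measure computation of Theorem~\ref{2GthmN} is genuinely used.
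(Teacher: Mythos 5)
This theorem is not proved in the present paper: it is quoted from \cite{Paper_GCB_KJF}, where the argument rests on Feng--Wang-type density lemmas (reproduced here as Lemmas \ref{2GlemO} and \ref{2GlemT}) followed by an analysis of the gap-length sets \eqref{G_u}--\eqref{G_v} using the multiplicative rational independence. Measured against that, your first step already has a genuine gap: you cannot invoke Schief's theorem to assume the $\{T_i\}$ satisfy the OSC. Schief requires $\mathcal{H}^{s'}(F_u)>0$ where $s'$ is the \emph{similarity dimension} of the hypothetical system, i.e.\ the root of $\sum_i r_i^{s'}=1$. A priori one only knows $\sum_i r_i^{s}\geq 1$, so $s'\geq s=\dimH F_u$; if the system genuinely overlaps then $s'>s$ and $\mathcal{H}^{s'}(F_u)=0$, and Schief gives nothing (passing to a minimal generating subfamily does not change this). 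Since the theorem is precisely about excluding overlapping standard IFSs, separation for $\{T_i\}$ cannot be assumed; the hypothesis $\mathcal{H}^s(F_u)=1$ is what replaces it, via density arguments of the type in Lemma \ref{2GlemO}, which control an \emph{arbitrary} similarity $S$ with $S(F_u)\subset F_u$ without any separation assumption on the generating family.

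The more serious gap is that your intended contradiction rests on a false premise. You argue that the walk-product description makes $\log G_u$ fail to be a finite union of translates of a finitely generated sub-semigroup of $\mathbb{R}$, because the first-return factors $bc^kd$, $k\geq 0$, generate a non-finitely-generated monoid. But as a subset of $\mathbb{R}^+$ the set of walk products collapses: the paper records in \eqref{G_u} that $G_u = g_u \left\langle 1,a \right\rangle \cup bdg_u\left\langle 1,a,bd,c\right\rangle \cup bg_v\left\langle 1,a,bd,c\right\rangle$, which \emph{is} a finite union of cosets of finitely generated multiplicative semigroups. So the dichotomy you propose does not distinguish the two descriptions of the gap set, and the remaining sketch (``slice by exponents and produce a nontrivial $\mathbb{Q}$-linear relation'') is exactly where the entire proof would have to live; as written it cannot be checked, and its motivation is invalid. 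A smaller point: in your supplementary density argument, strictness of $(1-a^s)/b^s<1$ does not follow from multiplicative rational independence, since $a^s+b^s=1$ is an additive, not a multiplicative, coincidence (indeed the example of Section \ref{four} has $(1-a^s)/b^s=1$).
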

We note that the standard IFSs in the statement of Theorem \ref{2GthmU} may have defining similarities which reflect. Whilst it should be clear from their proofs, we point out here that all the IFSs referred to in the statements of Theorems \ref{P2M}, \ref{P2Q} and \ref{P2T} that follow are assumed to have defining similarities that do not reflect and this is also the case for all the similarities depicted in diagrams in this paper. In fact it is not difficult to adjust Theorems \ref{P2Q} and \ref{P2T} so that they apply if reflecting similarities are allowed and we describe how this may be done in the comment after Theorem \ref{P2T} below. However it doesn't seem likely that this will be as easy to do for Theorem \ref{P2M}.

We prove Theorem \ref{P2M} in Section \ref{three}.  It removes the requirements of multiplicative rational independence of parameters and the calculation of Hausdorff measure from Theorem \ref{2GthmU}. It also  applies to both components and so identifies a whole new class of $2$-vertex IFS attractors. This means that whenever we apply Theorem \ref{2GthmN} we now know that we are calculating the Hausdorff measure of a different class of attractor. The condition $F_u\neq F_v$ is not in fact a restriction here for if  $F_u=F_v$ then the $2$-vertex IFS of Figure \ref{P22VertexUnitInterval} reduces to a standard IFS, see Lemmas \ref{P2G} and \ref{P2R}. In Section \ref{four} we present a specific example of an attractor $(F_u, F_v)$ where Theorem \ref{P2M} applies and for which we also calculate the Hausdorff measure.

\begin{thm}
\label{P2M}
For the attractor $(F_u, F_v)$ of the $2$-vertex IFS of Figure \ref{P22VertexUnitInterval} suppose that $F_u\neq F_v$.

Then neither $F_u$ nor $F_v$ is the attractor of any standard IFS, defined on $\mathbb{R}$, with or without separation conditions.
\end{thm}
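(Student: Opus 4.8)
The plan is to argue by contradiction, and by the obvious symmetry it suffices to treat $F_u$; the case of $F_v$ follows after interchanging the roles of the two vertices. Conjugating by an affine map of $\mathbb{R}$, I may assume $\operatorname{conv}F_u=\operatorname{conv}F_v=[0,1]$, so that $0,1\in F_u\cap F_v$; writing the non-reflecting similarities of the graph IFS as $S_{e_1}(x)=ax$, $S_{e_2}(x)=bx+1-b$, $S_{e_3}(x)=cx$, $S_{e_4}(x)=dx+1-d$ with $a,b,c,d\in(0,1)$, the identities $F_u=S_{e_1}(F_u)\cup S_{e_2}(F_v)$ and $F_v=S_{e_3}(F_u)\cup S_{e_4}(F_v)$ then hold, and the CSSC means the two summands on the first line are separated by the gap $(a,1-b)$ and those on the second by $(c,1-d)$. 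By Lemmas \ref{P2G} and \ref{P2R}, the hypothesis $F_u\neq F_v$ rules out the case in which the system collapses to a standard IFS directly; the task is to rule out $F_u$ being the attractor of \emph{any} standard IFS $\{T_1,\dots,T_m\}$ of non-reflecting contractions of $\mathbb{R}$. So assume $F_u=\bigcup_{i=1}^{m}T_i(F_u)$ and aim to derive $F_u=F_v$, a contradiction.

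The first point is that $F_u$ carries, near its left endpoint $0$, two competing descriptions of its small-scale structure. On the one hand, iterating the identity for $F_u$, the set $S_{e_1}^{\,k}\!\circ S_{e_2}(F_v)$ is a similar copy of $F_v$ lying in the interval $[a^k(1-b),a^k]$ for each $k\ge 0$; these are copies of $F_v$ of diameter $a^k b\to 0$ accumulating at $0$. On the other hand, for every small $\delta>0$ the set $F_u$ is a finite union of pieces $T_{w_1}\!\circ\cdots\circ T_{w_\ell}(F_u)$, each similar to $F_u$ and of diameter comparable to $\delta$, coming from the standard IFS. The strategy is to blow up these two pictures at the same scale and compare the limits.

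Concretely, fix $\delta_j\downarrow 0$, pick $k_j$ with $a^{k_j}\asymp\delta_j$ so that $F_u$ contains a copy $C_j$ of $F_v$ of diameter $\asymp\delta_j$ near $0$, and pick a standard-IFS piece $P_j$ of diameter $\asymp\delta_j$ that meets $C_j$. Rescaling by $\delta_j^{-1}$ about the left endpoint of $C_j$ and using compactness, in the Hausdorff metric, of the family of compact subsets of a fixed ball, pass to a subsequence along which the rescaled $C_j$ converge to a similar copy of $F_v$ and the rescaled $P_j$ converge to a set $P$ that is a similar copy of $F_u$ (the similarity ratios and translations are bounded away from $0$ and $\infty$ after rescaling, so the limiting maps are genuine similarities). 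If one can show that this limiting copy of $F_v$ and the set $P$ agree on an interval meeting the interiors of both convex hulls, then the rigidity coming from the CSSC — two graph-directed sets satisfying the convex strong separation condition that agree on such an interval must be related by a similarity — forces $F_v$ to be similar to $F_u$; since both have convex hull $[0,1]$ and reflections are excluded, the only such similarity is the identity, giving $F_u=F_v$ and the contradiction.

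The step I expect to be the main obstacle is the one where no separation is assumed for the standard IFS: I must exclude the degenerate scenario in which the piece $P_j$ closest to $C_j$ overlaps $C_j$ only in a portion that is small relative to $\operatorname{diam}C_j$, so that the two limiting sets merely touch rather than agreeing on an interval. Handling this needs quantitative control on how thick $F_u$ is at every scale — a lower bound showing that a similar copy of $F_u$ (or of $F_v$) which comes within $O(\delta_j)$ of $C_j$ must in fact contain a definite fraction of $C_j$ — and this rests on the graph IFS being CSSC together with $\dimH F_u=\dimH F_v=s$ holding with the \emph{same} exponent at both vertices, so that copies of $F_u$ and of $F_v$ of comparable diameter carry comparable $\mathcal{H}^s$-mass (needed only qualitatively, with no value computed). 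Once the scales are matched in this way, the remainder is routine bookkeeping with the ratios $a,b,c,d$ and the CSSC gaps.
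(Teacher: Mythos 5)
Your strategy is genuinely different from the paper's, but it has a gap that is not a technicality: it is the whole difficulty of the theorem. The paper's proof (Subsection \ref{three1}) never takes tangent limits; it shows via Lemmas \ref{P2O} and \ref{P2P} that no similar copy of $F_u$ inside $F_u$ or $F_v$ can span the gap between level-$1$ intervals, uses the CSSC to split the hypothetical standard IFS into the two level-$1$ pieces, pulls back by $S_{e_2}^{-1}$ and then repeatedly by $S_{e_3}^{-1}$ (using Lemma \ref{P2subset1} to rule out $F_u\subset F_v$), and derives a contradiction from an infinite strictly decreasing sequence of integers greater than $1$. Your blow-up argument replaces all of this with two assertions that are not established and that I do not believe can be established at the level of generality you need. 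First, the ``rigidity'' step --- that two similar copies of the CSSC attractor components which agree on an interval meeting the interiors of both convex hulls must be related by a global similarity --- is stated without proof, and it is essentially equivalent in difficulty to the theorem itself; nothing in the paper or in the standard literature gives it to you, and local agreement of two self-similar sets on a window does not in general propagate to a global similarity without exactly the kind of combinatorial control the paper builds by hand. Second, your proposed repair of the overlap problem does not work: since the standard IFS $\{T_1,\dots,T_m\}$ carries no separation condition, the generation-$\ell$ pieces may overlap arbitrarily, so the piece $P_j$ of diameter $\asymp\delta_j$ meeting $C_j$ may graze it in a set of arbitrarily small relative diameter, and $C_j$ may instead be covered by many such pieces no single one of which captures a definite fraction. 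Comparable $\mathcal{H}^s$-mass of copies of $F_u$ and $F_v$ (which does hold, by the OSC and Theorem \ref{dimension}) says nothing about where a given piece sits relative to $C_j$, and even a piece overlapping $C_j$ in large measure would give you equality of two sets on a set of positive measure, not agreement on an interval. So the limit configuration you extract is only ``a copy of $F_v$ touched by a copy of $F_u$,'' from which no contradiction follows.

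Two smaller points. You have transcribed the invariance equations incorrectly: in Figure \ref{P22VertexUnitInterval} the loops are $e_1$ at $u$ and $e_3$ at $v$, so $F_v=S_{e_3}(F_v)\cup S_{e_4}(F_u)$, not $S_{e_3}(F_u)\cup S_{e_4}(F_v)$; your copies of $F_v$ accumulating at $0$ inside $F_u$ are nonetheless correctly identified as $S_{e_1}^{k}\circ S_{e_2}(F_v)$. And your intended final contradiction ($F_u=F_v$) is a reasonable target, but note the paper does not reach it either --- it contradicts the existence of the standard IFS directly --- so even the endgame of your argument would need the unproved rigidity statement to close. If you want to salvage a limit-type argument, the place to start is proving something like Lemma \ref{P2O}: that a similarity $S$ with $S(F_u)\subset F_u$ cannot have $S(I_u)$ spanning $I_{g_u}$; that is where the real content lies.
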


The proof of Theorem \ref{P2Q} is given in Subsection \ref{five4}. Condition $(1)$ imposes no effective restriction on the form that the associated directed graph can take since if it doesn't hold then $F_u$ is a standard IFS attractor as we prove in Lemma \ref{P2nv1}. Condition (2) states that the maximum gap length of $F_u$ is not greater than some other specified level-$1$ gap lengths. Its purpose is to prevent similarity maps of $F_u$ from spanning the gaps between level-$1$ intervals (see Lemma \ref{P2nospan}). We discuss the possibility of weakening Condition (2) in Section \ref{six}. In Subsection \ref{five3} we show that $F_u \not\subset F_v$ in Condition (3) is in fact the correct generalisation of the condition $F_u \neq F_v$ of Theorem \ref{P2M}, and we also show that Theorem \ref{P2Q} is easy to apply in practice. Definitions of the terminology and notation used in the statement of this theorem are given in Section \ref{two}. 

\begin{thm}
\label{P2Q}
Let $\bigl( V, E^*, i, t, r, ((\mathbb{R},\left| \ \  \right|))_{v \in V}, (S_e)_{e \in E^1} \bigr)$ be an $n$-vertex ($n\geq 2$, CSSC) IFS, defined on the unit interval, with attractor $(F_u)_{u \in V}$. Let $u \in V$ be fixed and suppose the following conditions hold. 

\medskip

\textup{(1)}  There is some $w \in V$, $w\neq u$, and a simple cycle $\bc_w$ attached to $w$ but not to $u$.

 Let $\bp \in E_{uw}^*$ be a simple path and let $V^\prime$ denote the set of all vertices in the vertex lists of $\bp$ and $\bc_w$. 

\textup{(2)} For each $v \in V^\prime$, $\max G_u \leq \min G_v^1$.

\textup{(3)} For each $v \in V^\prime \setminus \left\{u\right\}$,  $F_u \not\subset F_v$.

\medskip

Then $F_u$ is not the attractor of any standard IFS, defined on $\mathbb{R}$, with or without separation conditions.

\end{thm}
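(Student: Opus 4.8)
The plan is to argue by contradiction. Suppose that $F_u$ were the attractor of a standard IFS $\{T_1,\dots,T_m\}$ of non-reflecting contracting similarities of $\mathbb{R}$, so that $F_u=\bigcup_{j=1}^m T_j(F_u)$; we make no separation assumption on this standard system, so the pieces $T_j(F_u)$ may overlap. Condition (1) is genuinely needed here: by Lemma \ref{P2nv1} its failure would make $F_u$ a standard IFS attractor outright. So fix the simple path $\bp\in E_{uw}^{*}$ and the simple cycle $\bc_w$ of Condition (1); write $S_{\bp}$ and $S_{\bc_w}$ for the compositions of edge maps along $\bp$ and once around $\bc_w$, and set $P_k:=S_{\bp}S_{\bc_w}^{\,k}(F_w)$ for $k\geq 0$. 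Since $S_{\bc_w}$ is a contraction, $(P_k)_{k\geq0}$ is a nested sequence of graph pieces of $F_u$ shrinking to a single point $z\in F_u$. Moreover, peeling $\bc_w$ one edge at a time, between $P_k$ and $P_{k+1}$ there is a finite chain $P_k=Q^{(k)}_0\supset Q^{(k)}_1\supset\dots\supset Q^{(k)}_{\ell}=P_{k+1}$, where $\ell$ is the length of $\bc_w$, each $Q^{(k)}_i$ is a scaled copy of $F_{v_i}$ with $v_i$ the $i$-th vertex of $\bc_w$ (so $v_i\in V^\prime$, and since $\bc_w$ avoids $u$, $v_i\neq u$), each $Q^{(k)}_{i+1}$ is one of the level-$1$ pieces of $Q^{(k)}_i$ in the CSSC decomposition, and $z\in Q^{(k)}_i$ for every $i$ and $k$.

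The engine of the argument is Lemma \ref{P2nospan}: via Condition (2), any similar copy of $F_u$ of ratio strictly less than $1$ that is contained in a copy of some $F_v$, $v\in V^\prime$, cannot span a level-$1$ gap of that copy of $F_v$, hence lies inside a single level-$1$ piece of it. Apply this to the $T$-cylinders $T_{j_1}\cdots T_{j_n}(F_u)$, which are genuine scaled copies of $F_u$. Since $z\in F_u$ is covered by $T$-cylinders of every scale and $P_0$ is separated from the rest of $F_u$ by gaps of positive length, choose a $T$-cylinder $C\ni z$ of diameter small enough that $C\subset P_0$, and let $k$ be maximal with $C\subset P_k$; then $C\subset Q^{(k)}_0$ but $C\not\subset Q^{(k)}_{\ell}$, so there is a largest $i^{*}<\ell$ with $C\subset Q^{(k)}_{i^{*}}$ and $C\not\subset Q^{(k)}_{i^{*}+1}$. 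Now the dichotomy. If $C$ does not span a level-$1$ gap of $Q^{(k)}_{i^{*}}$, then $C$ lies in a single level-$1$ piece of $Q^{(k)}_{i^{*}}$; as $z\in C$ and $z$ lies in the level-$1$ piece $Q^{(k)}_{i^{*}+1}$, disjointness of level-$1$ pieces (CSSC) forces $C\subset Q^{(k)}_{i^{*}+1}$, contradicting the choice of $i^{*}$. Hence $C$ must span a level-$1$ gap of $Q^{(k)}_{i^{*}}$, and then, comparing the length of that gap (at least $\min G_{v_{i^{*}}}^{1}$ after $Q^{(k)}_{i^{*}}$ is normalised to $F_{v_{i^{*}}}$) with the largest gap of the copy of $F_u$, Condition (2) forces the ratio $s_C$ of $C$, viewed as a copy of $F_u$ inside the normalised $F_{v_{i^{*}}}$, to satisfy $s_C\geq1$.

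This degenerate case, a full-size similar copy of $F_u$ appearing inside $F_{v_{i^{*}}}$ with $v_{i^{*}}\in V^\prime\setminus\{u\}$, is exactly what Condition (3) and the hypothesis that $\bc_w$ avoids $u$ are there to forbid, and it is where the overlap in the standard system bites, since one cannot read off the local structure of $F_u$ near $z$ from a single cylinder. I would run the previous paragraph for $T$-cylinders $C\ni z$ of diameter tending to $0$: if one of them lands in the first case we are done, so assume they all span level-$1$ gaps. A short estimate (using $\mathrm{diam}(C)\leq\mathrm{diam}(P_k)$ and the fixed product of cycle-edge ratios relating $P_k$ to $Q^{(k)}_{i^{*}}$) shows the $s_C$ lie in a fixed interval $[1,S_{\max}]$, while the normalised copies $\psi_C(F_u)\subset F_{v_{i^{*}}}$ stay bounded; passing to a subsequence along which $v_{i^{*}}$ is a fixed vertex $v^\prime\in V^\prime\setminus\{u\}$ and the $\psi_C$ converge, one obtains a similarity $\psi$ of ratio $\geq1$ with $\psi(F_u)\subset F_{v^\prime}$. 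Feeding this back into Condition (2) to control the gaps of $\psi(F_u)$, and the CSSC to stop points of $F_{v^\prime}$ lying in those gaps, upgrades it to the containment between $F_u$ and $F_{v^\prime}$ excluded by Condition (3); this is the point of the analysis in Subsection \ref{five3} showing that $F_u\not\subset F_v$ is the correct generalisation of $F_u\neq F_v$. A standard IFS whose similarities may reflect is handled by the same argument after composing the offending cylinder maps with a reflection.

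The main obstacle is precisely this last, degenerate step. Setting up the nested cycle pieces, invoking the no-spanning lemma, and the clean half of the dichotomy are essentially bookkeeping once Condition (1) supplies a cycle away from $u$; but because the hypothetical standard IFS may overlap, one is driven into the limiting argument, and the delicate work is extracting from it an honest set-containment of the kind Condition (3) forbids while keeping careful track of how $F_u$ and $F_{v^\prime}$ are positioned relative to each other near the cycle point $z$.
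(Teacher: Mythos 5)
Your strategy is sound and genuinely different from the paper's. The paper runs an infinite descent: following the same infinite path $\bp\,\bc_w\bc_w\cdots$ that never revisits $u$, it writes each $F_{t(e_k)}$ as a union of $n_k$ similar copies of $F_u$ and uses Lemma \ref{P2nospan} together with Condition (3) to force $1<n_{k+1}<n_k$ for every $k$, which is impossible; your proof instead localises at the limit point $z$ of the nested cycle pieces and analyses one small cylinder at its critical depth. Both arguments turn on the same two ingredients (copies of $F_u$ cannot span level-$1$ gaps, and Condition (3) forbids a full-size copy of $F_u$ inside $F_v$ for $v\neq u$), but yours only ever invokes Conditions (2) and (3) at the vertices visited by $\bc_w$, so if completed it proves a marginally stronger statement. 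The one thing to repair is the step you flag as the main obstacle: it is not an obstacle, and the compactness argument you sketch for it is both unnecessary and, as described, incomplete --- appealing to Condition (2) and the CSSC does not by itself convert ``$\psi(F_u)\subset F_{v'}$ with ratio $\geq 1$'' into the set containment $F_u\subset F_{v'}$. What closes it in one line is the normalisation to the unit interval: write $Q^{(k)}_{i^*}=\phi(F_{v_{i^*}})$ with $\phi$ of ratio $\rho$ and $C=\theta(F_u)$ with $\theta$ of ratio $r_C$; the inclusion of convex hulls $\theta(I_u)\subset\phi(I_{v_{i^*}})$ gives $r_C\leq\rho$ outright (both $I_u$ and $I_{v_{i^*}}$ have length $1$), while your spanning estimate gives $r_C\max G_u\geq\rho\min G^1_{v_{i^*}}\geq\rho\max G_u$, hence $r_C\geq\rho$. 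Therefore $r_C=\rho$, the two hulls coincide, $\theta=\phi$ (two non-reflecting similarities of equal ratio onto the same interval), and $C\subset Q^{(k)}_{i^*}$ becomes $F_u=\phi^{-1}(C)\subset F_{v_{i^*}}$, contradicting Condition (3) for that single cylinder --- no limit over cylinders, no subsequence, and no upgrade step are needed. (The only other detail worth a sentence is that a sufficiently small cylinder containing $z$ does lie in $P_0$, because $F_u\cap S_{\bp}(I_w)=S_{\bp}(F_w)$ by the iterated CSSC and the remaining points of $F_u$ are separated from $S_{\bp}(I_w)$ by level-$|\bp|$ gaps of positive length.)
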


The new class of $2$-vertex IFSs identified in Theorem \ref{P2M} is extended considerably by Theorem \ref{P2Q}. As an example, suppose we add any number of edges (with non-reflecting similarities) to the directed graph of Figure \ref{P22VertexUnitInterval} maintaining the CSSC. Suppose also that all level-$1$ gap lengths are kept equal across both vertices (see Subsection \ref{five2}, Equations \eqref{G_uG_u^1}) and that $F_u \not\subset F_v$ and $F_v \not\subset F_u$, then Theorem \ref{P2Q} applies and neither $F_u$ nor $F_v$ is a standard IFS attractor. 

Finally we state a theorem which is of theoretical interest, although for practical purposes it is superceded by Theorem \ref{P2M}. This is because there are not many self-similar sets for which the Hausdorff measure is known and at the time of writing the only components of  $n$-vertex ($n\geq 2$, CSSC) IFS attractors that we can calculate the Hausdorff measure for are the components of the attractors of $2$-vertex IFSs of the type shown in Figure \ref{P22VertexUnitInterval}. For the Hausdorff measure of the attractors of standard (COSC) IFSs defined on the unit interval, see \cite{Paper_Ayer_Strichartz} and \cite{Paper_Marion}. In fact (as far as I'm aware) the exact Hausdorff measure hasn't been calculated for any self-similar set of non-integral Hausdorff dimension greater than $1$, see \cite{Paper_ZhouFeng}. We outline a proof of Theorem \ref{P2T} in Subsection \ref{five5}.
\begin{thm}
\label{P2T}
Let $\bigl( V, E^*, i, t, r, ((\mathbb{R},\left| \ \  \right|))_{v \in V}, (S_e)_{e \in E^1} \bigr)$ be an $n$-vertex ($n\geq 2$, CSSC) IFS, defined on the unit interval, with attractor $(F_u)_{u \in V}$ and $s=\dimH F_u$. Suppose that the number of edges in the directed graph is minimal. Let $u \in V$ be fixed and suppose the following conditions hold. 

\medskip

\textup{(1)}  There is some $w \in V$, $w\neq u$, and a simple cycle $\bc_w$ attached to $w$ but not to $u$.

 Let $\bp \in E_{uw}^*$ be a simple path and let $V^\prime$ denote the set of all vertices in the vertex lists of $\bp$ and $\bc_w$. 

\textup{(2)} $\mathcal{H}^s(F_{u})=1$ and, for each $v \in V^\prime$ and all $e \in E_v^1$, $\mathcal{H}^s(F_{t(e)})=1$.

\textup{(3)} For each $v \in V^\prime \setminus \left\{u\right\}$,  $F_u \not\subset F_v$.

\medskip

Then $F_u$ is not the attractor of any standard IFS, defined on $\mathbb{R}$, with or without separation conditions.

\end{thm}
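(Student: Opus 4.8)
The plan is to mirror the strategy of Theorem~\ref{P2Q}, with the hypotheses on level-$1$ gap lengths replaced by the Hausdorff-measure normalisation in Condition~(2). Suppose, for a contradiction, that $F_u$ is the attractor of a standard IFS $\{T_1,\dots,T_m\}$ on $\mathbb{R}$ (possibly with reflections, possibly overlapping). Since $\dimH F_u = s$ and $\mathcal{H}^s(F_u)=1$ by Condition~(2), the standard IFS must satisfy $\sum_i \rho_i^s \geq 1$, and because $F_u$ is also the attractor of the $\{S_e\}$ directed-graph system with $\mathcal{H}^s(F_u)=1$, a comparison of $s$-dimensional measures forces the standard IFS to be ``tight'': essentially it must satisfy the open set condition (or at least have overlaps of $\mathcal{H}^s$-measure zero), so that $\sum_i \rho_i^s = 1$ and the pieces $T_i(F_u)$ meet only in measure-zero sets. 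This is where minimality of the number of edges is used: it guarantees that the directed-graph description of $F_u$ is itself ``efficient'' in the sense that the natural covering by cylinders is $\mathcal{H}^s$-optimal, so the two systems can be compared piece by piece. I would record this as a preliminary lemma (analogous to Lemma~\ref{P2nv1} and Lemma~\ref{P2nospan} in the gap-length setting): \emph{under Conditions~(1)--(2), the convex hull of $F_u$ is $[0,1]$, the level-$1$ pieces of $F_u$ tile $[0,1]$ up to $\mathcal{H}^s$-null overlaps, and the same holds at every vertex $v \in V'$ reachable along $\bp$ and around $\bc_w$.}

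With that in hand, the core of the argument is the same ``cylinder-matching'' / ``non-periodicity'' contradiction used for Theorem~\ref{P2M} and Theorem~\ref{P2Q}. The idea is that a standard IFS attractor is, in a suitable sense, \emph{self-similar with a single transition rule}: every sufficiently small piece of $F_u$, when magnified, looks like $F_u$ itself (one of finitely many reflected copies). By contrast, following the simple path $\bp$ from $u$ into $w$ and then looping around the cycle $\bc_w$ (which, by Condition~(1), is not attached to $u$), one produces inside $F_u$ a sequence of rescaled cylinders whose magnified images converge to $F_w$, or to $F_v$ for the various $v$ in the vertex lists of $\bp$ and $\bc_w$. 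If $F_u$ were a standard IFS attractor, each such blow-up limit would have to be a reflected copy of $F_u$; hence $F_w$, and each such $F_v$, would be a reflected copy of $F_u$, and in particular $F_u$ would be contained in (indeed equal, up to reflection, to) each $F_v$. This is exactly what Condition~(3) forbids: for some $v \in V' \setminus \{u\}$ we have $F_u \not\subset F_v$. The measure-theoretic bookkeeping of the first lemma is what upgrades the purely topological ``blow-up limit'' statement into the containment $F_u \subseteq F_v$: the $\mathcal{H}^s$-measures of the matched pieces must agree, and $\mathcal{H}^s(F_u) = \mathcal{H}^s(F_v) = 1$ together with the tiling property pins down the inclusion. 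I would carry this out by first fixing the reflected copy of $F_u$ that a given small cylinder must equal (finitely many choices, pass to a subsequence), then transporting the equality along $\bp$ and around $\bc_w$ using the directed-graph recursion $F_v = \bigcup_{e \in E_v^1} S_e(F_{t(e)})$.

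The steps, in order, would be: (i) establish the preliminary lemma above on convex hulls, tiling up to $\mathcal{H}^s$-null sets, and the normalisation $\sum \rho_i^s = 1$ for any putative standard IFS, using Condition~(2) and minimality of the edge set; (ii) set up the blow-up machinery — define, for each vertex $v \in V'$, a sequence of cylinders in $F_u$ whose $S_e$-image chains realise $F_v$ in the limit, using the path $\bp$ and iterates of the cycle $\bc_w$; (iii) assuming $F_u$ is a standard IFS attractor, show each blow-up limit is a reflected affine copy of $F_u$, and use the measure normalisation from (i) to deduce $F_u \subseteq F_v$ (up to reflection) for every $v \in V'$; (iv) take $v \in V' \setminus \{u\}$ as in Condition~(3) to get the contradiction $F_u \not\subset F_v$ yet $F_u \subseteq F_v$.

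The main obstacle is step~(iii): turning the geometric statement ``a rescaled piece of $F_u$ converges to $F_v$'' into the exact set identity ``$F_v$ is a reflected similar copy of $F_u$'', and then into the inclusion $F_u \subseteq F_v$. The difficulty is that a standard IFS may be \emph{overlapping}, so one cannot simply read off cylinder structure; one has to argue that, because $\mathcal{H}^s(F_u) = 1$ and the overlaps are $\mathcal{H}^s$-null (from step~(i)), the overlapping standard IFS behaves like a non-overlapping one as far as $s$-dimensional measure and the identification of blow-up limits are concerned. Controlling the reflections adds bookkeeping but no essential difficulty — as the remark after Theorem~\ref{P2T} indicates, reflections can be absorbed by allowing a finite symmetry group acting on the limiting copies. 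The role of minimality of the edge set is precisely to rule out pathological directed-graph descriptions in which the natural cylinder covering overshoots $\mathcal{H}^s(F_v)$, which would break the measure-matching in step~(iii); making this rigorous — i.e. showing minimality forces the covering to be $\mathcal{H}^s$-optimal at each $v \in V'$ — is the other delicate point, and I would expect to need the results behind Theorem~\ref{2GthmN} (how $\mathcal{H}^s$ propagates through the directed graph) in a generalised $n$-vertex form.
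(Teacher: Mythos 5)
Your proposal diverges from the paper's argument and has two genuine gaps. First, the ``tightness'' claim in your step (i) is false: from $F_u=\bigcup_i T_i(F_u)$ and $\mathcal{H}^s(F_u)=1$ one only gets $\sum_i \rho_i^s\geq 1$, and nothing forces equality or $\mathcal{H}^s$-null overlaps. For instance the middle-thirds Cantor set $C$ (with $\mathcal{H}^s(C)=1$) is also the attractor of $\{x/3,\,x/3+2/3,\,x/9\}$, where $\sum_i\rho_i^s=1+1/4$ and the first and third pieces overlap in a set of positive $\mathcal{H}^s$-measure. Since the theorem explicitly allows overlapping standard IFSs, you cannot assume the putative system behaves like an OSC system, and everything downstream that relies on ``overlaps are null'' is unsupported. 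Second, and more fundamentally, step (iii) rests on the assertion that every sufficiently small piece of a standard IFS attractor, magnified, is a (reflected) copy of $F_u$, so that the directed-graph cylinders $S_{\bp}\circ S_{\bc_w}^k(F_w)$ sitting inside $F_u$ must coincide with such copies. That is precisely the alignment problem the whole paper is about, and it is not a property of standard IFS attractors: cylinders of the $T_i$-system and cylinders of the $S_e$-system need not be nested in one another in any controlled way, and blow-ups of $F_u$ at arbitrary scales and locations need not resemble $F_u$. You give no mechanism for matching the two cylinder structures, so the deduction ``$F_v$ is a reflected copy of $F_u$, hence $F_u\subseteq F_v$'' does not follow. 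You also misread the role of edge-minimality: it is not a measure-optimality of the covering but a purely combinatorial device.

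The paper's route is different and much more elementary at this stage. It first proves (Lemma \ref{2GlemU}) that under Condition (2) and edge-minimality, any similarity $S$ with $S(F_u)\subset F_v$ has $S(I_u)$ contained in a single level-$1$ interval of $F_v$: if $S(I_u)$ met $m\geq 2$ level-$1$ intervals, Lemma \ref{2GlemT} (which is where $\mathcal{H}^s=1$ is really used) would force $S(F_u)$ to equal the union of those $m$ level-$1$ pieces, and one could then replace those $m$ edges by a single edge carrying $S$, contradicting minimality of $\#E^1$. With this no-spanning property in hand, the proof of Theorem \ref{P2T} is verbatim the induction of Subsection \ref{five4}: decompose $F_u=\bigcup_j S_j(F_u)$, group the maps landing in $S_{e_1}(I_{t(e_1)})$, use Condition (3) to show the group is proper and that no single map fills the whole piece, conjugate by $S_{e_1}^{-1}$, and iterate along $\bp$ and around $\bc_w$ to produce an infinite strictly decreasing sequence of integers $n_k>1$, which is the contradiction. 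If you want to salvage your approach you would need to first prove something like Lemma \ref{2GlemU} anyway, at which point the decreasing-count induction is both simpler and already written down in Subsection \ref{five4}.
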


We can modify Theorems \ref{P2Q} and \ref{P2T} so that they also hold if we allow the $n$-vertex and standard IFSs in their statements to be defined by similarities which may reflect. All we need to do is to change Condition (3) so that it reads
\begin{center}
(3) \emph{For each $v \in V^\prime \setminus \left\{u\right\}$,  $F_u \not\subset F_v$ and $F_u \not\subset R(F_v)$},
\end{center}
where $R$ is the function $R(x)=1-x$ which reflects about $x=1/2$. The proof in Subsection \ref{five4} can now be adjusted accordingly since Lemmas \ref{P2nospan} and \ref{2GlemU} also hold for reflecting similarities. 

\section{Notation and background theory}\label{two}
We often use a notation of the form $(A_c)_{c\in B}$ and $(A)_{c\in B}$ when $B$ is a finite set of $n$ elements as this is just a convenient way of writing down ordered $n$-tuples. That is, if $B$ is ordered as $B=(b_1,b_2,\ldots,b_n)$, then $(A_c)_{c\in B}=(A_{b_1},A_{b_2},\ldots,A_{b_n})$ and $(A)_{c\in B}=(A,A,\ldots,A)$.

Apart from mappings of the form $S^k$, we will use $\circ$ for the composition of mappings throughout. The order of composition is $(S\circ T)(x)=S(T(x))$.

For further background theory, definitions and references to other source material see \cite{phdthesis_Boore}. 

\subsection{Directed graph IFSs}\label{two1} 

We use $\bigl(V,E^*,i,t,r,((X_{v},d_{v}))_{v \in V},(S_e)_{e \in E^1}\bigr)$ to indicate a \emph{directed graph IFS} where $\bigl(V,E^*,i,t\bigr)$ is the associated \emph{directed graph}, $V$ is the set of all vertices, $E^*$ is the set of all finite (directed) paths, $i:E^* \to V$ and $t:E^* \to V$ are the initial and terminal vertex functions. The set of all (directed) edges in the graph, that is the set of paths of length $1$, is written as $E^1$ with $E^1\subset E^*$. $V$ and $E^1$ are always assumed to be finite sets. We use $E^1_u$ to indicate the set of all edges leaving the vertex $u$, $E^k_u$ for the set of all paths of length $k$ leaving the vertex $u$, $E^k_{uv}$ for the set of all paths of length $k$ starting at the vertex $u$ and finishing at $v$ and so on. 

A finite \emph{(directed) path} $\be \in E^*$ is a finite string of consecutive edges so a path of length $k$ can be written as $\be=e_1 \cdots e_k$ for some edges $e_i \in E^1$ with $t(e_i)=i(e_{i+1})$ for $1\leq i < k$. The initial vertex of a path is the initial vertex of its first edge so $i(\be)=i(e_1)$ and similarly $t(\be)=t(e_k)$. A \emph{cycle} is a path with the same initial and terminal vertices. A \emph{loop} is an edge $e\in E^1$ with $i(e)=t(e)$, that is a cycle of length $1$. The \emph{vertex list} of a path $\be=e_1\cdots e_k \in E^*$ is  $v_1v_2v_3\cdots v_{k+1} = i(e_1) t(e_1) t(e_2) \cdots $ $t(e_k)$ and shows the order in which a path visits its vertices. A \emph{simple path}, which is not a cycle, visits no vertex more than once so a path $\bp=e_1\cdots e_k\in E^*$ is simple if its vertex list contains exactly $k+1$ different vertices. A \emph{simple cycle} is a cycle which visits no vertex more than once apart from the initial and terminal vertices which are the same so a cycle $\bc=e_1\cdots e_k\in E^*$ is simple if its vertex list contains exactly $k$ different vertices. We say that \emph{two distinct paths are attached} if their vertex lists contain a common vertex. A \emph{path $\be$ is attached to a vertex $v$} if $v$ is in the vertex list of $\be$. 

We assume the directed graph is strongly connected and that each vertex in the directed graph has at least two edges leaving it, this is to avoid components of the attractor (defined below) that consist of single point sets or are just scalar copies of those at other vertices (see \cite{Paper_Edgar_Mauldin}). The \emph{contraction ratio function} $r:E^*\to (0,1)$ assigns contraction ratios to the finite paths in the graph. To each vertex $v \in V$ is associated the non-empty complete metric space $(X_{v},d_{v})$ and to each directed edge $e\in E^1$ is assigned a contraction $S_{e}:X_{t(e)} \to X_{i(e)}$ which has the contraction ratio given by the function $r(e) = r_{e}$. We follow the convention already established in the literature, see \cite{Book_Edgar2, Paper_Edgar_Mauldin}, that $S_e$ maps in the opposite direction to the direction of the edge $e$ that it is associated with in the graph.  The contraction ratio along a path $\be=e_1e_2\cdots e_k \in E^*$ is defined as $r(\be)=r_{\be}=r_{e_1}r_{e_2}\cdots r_{e_k}$. The ratio $r_{\be}$ is the ratio for the contraction $S_{\be}:X_{t(\be)} \to X_{i(\be)}$ along the path $\be$ where $S_{\be}=S_{e_1}\circ S_{e_2}\circ \cdots \circ S_{e_k}$.

In this paper we are only going to be concerned with $n$-vertex IFSs defined on the unit interval (see below) where $((X_{v},d_{v}))_{v \in V} = ((\mathbb{R}^m,\left| \ \  \right|))_{v \in V}$ with $m=1$ and $(S_e)_{e \in E^1}$ are contracting similarities and not just contractions, however we give the remaining definitions and background results for general $n$-vertex IFSs defined on $m$-dimensional Euclidean space. We use $K(\mathbb{R}^m)$ to denote the set of all non-empty compact subsets of $\mathbb{R}^m$. Using the Contraction Mapping Theorem it can be shown that an $n$-vertex IFS $\bigl( V, E^*, i, t, r, ((\mathbb{R}^m,\left| \ \  \right|))_{v \in V}, (S_e)_{e \in E^1} \bigr)$ determines a unique list of non-empty compact sets $(F_u)_{u \in V} \in (K(\mathbb{R}^m))^n$ which satisfies the invariance equation 
\begin{equation}
\label{Invariance equation3}
 (F_u)_{u \in V} = \biggl( \ \bigcup_{ \substack{e\in E_u^1} }S_e(F_{t(e)}) \ \biggr)_{u \in V},
\end{equation} 
see  \cite[Theorem 1.3.4]{phdthesis_Boore}, \cite[Theorem 4.3.5]{Book_Edgar2} or \cite[Theorem 1]{Paper_Mauldin_Williams}. Under the terms of the Contraction Mapping Theorem $(F_u)_{u \in V}$ is known as the \emph{attractor} of the system and  we call the $n$ non-empty compact sets $F_u$,  $u \in V$, the \emph{components of the attractor}. 

The \emph{open set condition} (OSC) is satisfied if and only if there exist non-empty bounded open sets, $(U_u)_{u\in V}\subset (\mathbb{R}^m)^n$ such that for each $u\in V$
\begin{gather*}
S_e(U_{t(e)}) \subset U_u   \textrm{ for all } e \in E_u^1, \\
\textrm{ and }S_e(U_{t(e)})\cap S_f(U_{t(f)}) = \emptyset \textrm{ for all } e,f\in E_u^1,\textrm{ with } e \neq f.
\end{gather*}
See \cite{Paper_Hutchinson}, \cite{Book_KJF2} or \cite{Book_Edgar2}.

The \emph{convex open set condition} (COSC) is satisfied if and only if the OSC is satisfied for non-empty bounded open sets $(U_u)_{u\in V}\subset (\mathbb{R}^{n})^{\#V}$, where these sets are also convex. See \cite{Paper_Elkes_Keleti_Mathe,Paper_FengWang}. 

The \emph{strong separation condition} (SSC) is satisfied if and only if for each $u\in V$,
\begin{equation*}
S_e(F_{t(e)})\cap S_f(F_{t(f)}) = \emptyset \textrm{ for all } e,f\in E_u^1,\textrm{ with } e \neq f.
\end{equation*}

We write $C(F_u)$ for the convex hull of $F_u$. 

The \emph{convex strong separation condition} (CSSC) is satisfied if and only if for each $u\in V$,
\begin{equation*}
S_e(C(F_{t(e)}))\cap S_f(C(F_{t(f)})) = \emptyset \textrm{ for all } e,f\in E_u^1,\textrm{ with } e \neq f.
\end{equation*}
The CSSC holds for the $2$-vertex IFSs illustrated in Figures \ref{P22VertexUnitInterval}, \ref{Paper2Example}, \ref{P2VertexOneLoop}, \ref{P2VertexSubset} and \ref{CounterExample}.

For a set $A\subset \mathbb{R}^m$, we use the usual notation $\mathcal{H}^s(A)$ for the $s$-dimensional Hausdorff measure, $\dimH A$  and $\dimB A$ for the Hausdorff and box-counting dimension. The next theorem is the main dimension result for $n$-vertex IFSs defined on $\mathbb{R}^m$, see \cite[Theorem 6.9.6]{Book_Edgar2} or \cite[Theorem 3]{Paper_Mauldin_Williams}. For standard IFSs with $n=1$ this is the same as \cite[Theorem 9.3]{Book_KJF2}.

\begin{thm} 
\label{dimension}
Let $\bigl(V,E^*,i,t,r,((\mathbb{R}^m,\left| \ \  \right|))_{v \in V},(S_e)_{e \in E^1}\bigr)$ be an $n$-vertex IFS with attractor $(F_u)_{u \in V}$ where the mappings $(S_e)_{e \in E^1}$ are contracting similarities. Let $\bA(t)$ denote the $n\times n$ matrix 
whose $uv$th entry is
\begin{equation*}
A_{uv}(t) = \sum_{e\in E_{uv}^1} r_e^t,
\end{equation*}
let $\rho\left(\bA(t)\right)$ be the spectral radius of $\bA(t)$, and let $s$ be the unique non-negative real number that is the solution of $\rho\left(\bA(t)\right)=1$.  

If the OSC is satisfied then, for each $u \in V$, $s = \dimH F_u = \dimB F_u$ and $0 < \mathcal{H}^s \left(F_u\right) < +\infty$.
\end{thm}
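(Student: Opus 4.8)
The plan is to run Hutchinson's mass–distribution argument in the directed–graph form of \cite{Paper_Mauldin_Williams,Book_Edgar2}, establishing $\dimB F_u\leq s$ and $\dimH F_u\geq s$ separately and reading off $0<\mathcal H^s(F_u)<+\infty$ on the way. First I would record the Perron--Frobenius input. Strong connectivity means the nonzero pattern of $\bA(t)$ is that of the irreducible adjacency matrix, so for every $t\geq 0$ the spectral radius $\rho(\bA(t))$ is a simple eigenvalue with a strictly positive right eigenvector; moreover $t\mapsto\rho(\bA(t))$ is continuous and strictly decreasing on $[0,\infty)$ (each $r_e<1$ makes $\bA(t)$ entrywise strictly decreasing and strictly smaller off the zero pattern), with $\rho(\bA(0))\geq 2$ because every vertex emits at least two edges, and $\rho(\bA(t))\to 0$ as $t\to\infty$; hence $s$ exists and is unique. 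Fix a strictly positive right eigenvector $(p_v)_{v\in V}$ of $\bA(s)$, so $\sum_{e\in E_u^1}r_e^{\,s}p_{t(e)}=p_u$ for all $u$, and for a path $\be\in E_{uv}^*$ put $q_\be=r_\be^{\,s}p_v/p_u$; then $q_{\be f}=q_\be q_f$ and $\sum_{e\in E_u^1}q_e=1$ for every $u$. For $\delta>0$ let $\Lambda_\delta(u)$ be the set of paths $\be=e_1\cdots e_k$ with $i(\be)=u$ and $r_{e_1\cdots e_{k-1}}\geq\delta>r_\be$: the partial ratios along any infinite path from $u$ decrease to $0$, so $\Lambda_\delta(u)$ is a finite antichain through which each such infinite path passes exactly once, whence $F_u=\bigcup_{\be\in\Lambda_\delta(u)}S_\be(F_{t(\be)})$ and, by induction on depth from the edge identity, $\sum_{\be\in\Lambda_\delta(u)}q_\be=1$.

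For the upper bounds I would use the cover $\{S_\be(F_{t(\be)}):\be\in\Lambda_\delta(u)\}$ of $F_u$ by at most $\#\Lambda_\delta(u)$ sets of diameter at most $D\delta$, where $D=\max_v\operatorname{diam}F_v$. Since $r_\be<\delta$ on $\Lambda_\delta(u)$, one has $\sum_{\be\in\Lambda_\delta(u)}r_\be^{\,s}=\sum_\be q_\be\,p_u/p_{t(\be)}\leq p_u/\min_v p_v$, so $\mathcal H^s_{D\delta}(F_u)\leq D^s p_u/\min_v p_v$ for all $\delta$, giving $\mathcal H^s(F_u)<+\infty$ and $\dimH F_u\leq s$ as $\delta\to 0$. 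And $q_\be\geq(r_{\min}\delta)^s\min_v p_v/p_u$ with $r_{\min}=\min_e r_e$ (as $r_\be\geq r_{\min}\delta$ on $\Lambda_\delta(u)$) forces $\#\Lambda_\delta(u)\leq(r_{\min}\delta)^{-s}p_u/\min_v p_v$, so the cover coming from $\Lambda_{\delta/D}(u)$ shows the $\delta$-covering number of $F_u$ is $O(\delta^{-s})$ and hence $\dimB F_u\leq s$.

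For the lower bound I would introduce the natural measures. Because $\sum_{e\in E_u^1}q_e=1$, the map sending a tuple $(\nu_v)_v$ of Borel probability measures to $\bigl(\sum_{e\in E_u^1}q_e\,(S_e)_*\nu_{t(e)}\bigr)_u$ is a contraction (ratio at most $\max_e r_e$) in the Wasserstein metric, with a unique fixed point $(\mu_u)_u$; one checks $\operatorname{supp}\mu_u=F_u$ and, expanding the self-similarity relation edge by edge until every branch reaches $\Lambda_r(u)$, $\mu_u=\sum_{\be\in\Lambda_r(u)}q_\be\,(S_\be)_*\mu_{t(\be)}$ for every $r>0$. Now invoke the OSC: taking open sets $(U_v)_v$ as in its definition, iterating $\bigcup_{e\in E_u^1}S_e(\overline{U_{t(e)}})\subset\overline{U_u}$ and passing to the limit gives $F_v\subset\overline{U_v}$ for all $v$, a longest–common–prefix reduction to the OSC at a single vertex shows the sets $S_\be(U_{t(\be)})$ with $\be\in\Lambda_r(u)$ are pairwise disjoint, and each contains a ball of radius at least $c_1 r$ and lies in a ball of radius at most $c_2 r$, where $c_1,c_2>0$ depend only on the inradii and circumradii of the finitely many $U_v$ and on $r_{\min}$. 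Given $x\in F_u$ and small $r>0$, the elementary Euclidean packing estimate (e.g.\ \cite[Lemma~9.2]{Book_KJF2}) bounds by some $M=M(m,c_1,c_2)$ the number of $\be\in\Lambda_r(u)$ with $S_\be(\overline{U_{t(\be)}})\cap B(x,r)\neq\emptyset$, while $(S_\be)_*\mu_{t(\be)}(B(x,r))=0$ for all other $\be$ since then $S_\be(F_{t(\be)})\subset S_\be(\overline{U_{t(\be)}})$ misses $B(x,r)$; hence $\mu_u(B(x,r))\leq\sum_\be q_\be\leq M\max_{\be\in\Lambda_r(u)}q_\be\leq M'r^s$. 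By the mass distribution principle $\mathcal H^s(F_u)\geq\mu_u(F_u)/M'=1/M'>0$ and $\dimH F_u\geq s$, so with the previous step $s=\dimH F_u=\dimB F_u$ and $0<\mathcal H^s(F_u)<+\infty$.

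The Perron--Frobenius bookkeeping and the upper bounds are routine; the substantive step, and the only place separation is used, is the lower bound, and I expect the main obstacle to be making the covering estimate genuinely uniform. One must arrange that $F_v\subset\overline{U_v}$, verify that the iterated images $S_\be(U_{t(\be)})$ along a whole cut set are mutually disjoint, and bound the inradii of these sets from below so that only boundedly many can crowd around a ball of the matching radius. This is exactly the point at which the geometry of $\mathbb R^m$, rather than just the combinatorics of the directed graph, enters, and it is what breaks down without a separation condition.
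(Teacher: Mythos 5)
Your sketch is correct: it is the standard Perron--Frobenius/cut-set/mass-distribution argument of Mauldin--Williams and Edgar, with the stopping sets $\Lambda_\delta(u)$ giving the upper bounds and the self-similar measure tuple plus the OSC packing estimate giving $\mathcal{H}^s(F_u)>0$. The paper does not prove this theorem at all --- it quotes it as background, citing \cite[Theorem 6.9.6]{Book_Edgar2} and \cite[Theorem 3]{Paper_Mauldin_Williams} --- and your proposal is essentially a faithful reconstruction of the proof in those references, so there is nothing further to compare.
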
 

We say that an $n$-vertex IFS, $\bigl( V, E^*, i, t, r, ((\mathbb{R},\left| \ \  \right|))_{v \in V}, (S_e)_{e \in E^1} \bigr)$, is \emph{defined on the unit interval} if  its attractor, $(F_u)_{u \in V} \in (K(\mathbb{R}))^n$, is such that $\left\{0, 1\right\}\subset F_u \subset I_u=[0,1]$ for each $u \in V$. Here $I_u$ is the smallest convex set containing $F_u$ with $I_u=C(F_u)$. We use the notation $(I_u)_{u \in V}$ instead of $([0,1])_{u \in V}$ as it is useful for keeping track of the direction of similarities.  

In general, for any set $A\subset \mathbb{R}$, we call $[p,q]$ a \emph{gap interval} of $A$ if $\left\{p,q\right\} \subset A$ and $(p,q)\cap A=\emptyset$. A \emph{gap length} is the length of a gap interval and we use $G(A)$ to indicate the set of all gap lengths of $A$.  

For each $u \in V$, $F_u^k$ denotes the union of the \emph{level-$k$ intervals} of $F_u$ given by
\begin{equation*}
F_u^k=\bigcup_{ \substack{\be\in E_u^k} }S_{\be}(I_{t(\be)}),
\end{equation*}
where we put $F_u^0=I_u$. Some level-$k$ intervals are illustrated in Figure \ref{Paper2Example}. If $S_{\be}(I_{t(\be)})$ is a level-$k$ interval then $S_{\be}(F_{t(\be)})$ is the corresponding \emph{level-$k$ elementary piece}. In general $S_{\be}(F_{t(\be)})$ is an \emph{elementary piece} if $\be \in E^*$. The set of \emph{level-$k$ gap lengths} of $F_u$ is $G_u^k = \left\{ \left|J\right|    :   J \textrm{ is an open interval in } I_u \setminus F_u^k \right\} = G(F_u^k)$. The set of \emph{gap lengths} of $F_u$ is $G_u = \left\{ \left|J\right|    :   J \textrm{ is an open interval in } I_u \setminus F_u \right\}$ $= G(F_u)$. As $F_u=\bigcap_{k=0}^{\infty}F_u^k$ we also have $G_u=\bigcup_{k=1}^{\infty}G_u^k$. For $n$-vertex IFSs, defined on the unit interval, the  CSSC ensures that the sets of gap lengths and level-$k$ gap lengths of each component of the attractor exist and are well-defined, see \cite[Section 2.2]{phdthesis_Boore}.

\subsection{The $2$-vertex IFS of Figure \ref{P22VertexUnitInterval}}

In this subsection we give some basic information regarding the $2$-vertex (CSSC) IFS of Figure \ref{P22VertexUnitInterval}.

In all the figures of this paper lower case letters are used to indicate lengths of intervals and gap intervals, so the parameters $\left\{g_u,g_v,a,b,c,d\right\}\subset \mathbb{R}^+$  in Figure \ref{P22VertexUnitInterval} are such that
\begin{equation*} 
a+g_u+b=c+g_v+d=1.
\end{equation*}

The level-$1$ intervals of $F_u$  are
\begin{equation*}
S_{e_1}(I_u)=[0,a], \quad S_{e_2}(I_v)=[a+g_u,1], 
\end{equation*}
and the level-$1$ intervals of $F_v$ are
\begin{equation*}
S_{e_3}(I_v)=[0,c], \quad S_{e_4}(I_u)=[c+g_v,1].
\end{equation*}

The contracting similarity ratios of the similarities are
\begin{align*}
r_{e_1}=\frac{\left|S_{e_1}(I_u)\right|}{\left|I_u\right|}=a, \quad r_{e_2}=\frac{\left|S_{e_2}(I_v)\right|}{\left|I_v\right|}=b, \\
r_{e_3}=\frac{\left|S_{e_3}(I_v)\right|}{\left|I_v\right|}=c, \quad r_{e_4}=\frac{\left|S_{e_4}(I_u)\right|}{\left|I_u\right|}=d.
\end{align*}
and the similarities are    
\begin{align*}
&S_{e_1}(x)=r_{e_1}x=ax, \quad S_{e_2}(x)=r_{e_2}x+a+g_u=bx+a+g_u, \\
&S_{e_3}(x)=r_{e_3}x=cx, \quad S_{e_4}(x)=r_{e_4}x+c+g_v=dx+c+g_v.
\end{align*}

From the Invariance Equation \eqref{Invariance equation3}
\begin{align}
\label{Invariance equation1}
&F_u=S_{e_1}(F_u) \cup S_{e_2}(F_v), \\
\label{Invariance equation2}
&F_v=S_{e_3}(F_v) \cup S_{e_4}(F_u).
\end{align}

The sets of gap lengths $G_u$ and $G_v$ can be expressed as a finite union of cosets of finitely generated semigroups as  
\begin{gather}
\label{G_u}
G_u = g_u \left\langle 1,a \right\rangle \cup bdg_u\left\langle 1,a,bd,c\right\rangle  \cup  bg_v\left\langle 1,a,bd,c\right\rangle, \\
\label{G_v}
G_v = g_v \left\langle 1,c \right\rangle \cup bdg_v\left\langle 1,a,bd,c\right\rangle  \cup  dg_u\left\langle 1,a,bd,c\right\rangle,
\end{gather}
see \cite[Proposition 2.3.4 and Section 2.4]{phdthesis_Boore}. 

In the next lemma we collect together some basic results about maximum gap lengths that will be referred to later.
\begin{lem}
\label{P2I}
\begin{equation*}
\textup{(a)} \  \max G_u = \max\left\{g_u, bg_v\right\}, \quad \textup{(b)} \  \max G_v = \max\left\{g_v, dg_u\right\}.
\end{equation*}
Let $T: \mathbb{R} \rightarrow \mathbb{R}$ be a contracting similarity with contracting similarity ratio $r_T$, $0<r_T<1$. Then
\begin{equation*}
\textup{(c)} \  \max G(T(F_u)) = \max\left\{r_Tg_u, r_Tbg_v\right\}, \  \textup{(d)} \  \max G(T(F_v)) = \max\left\{r_Tg_v, r_Tdg_u\right\}.
\end{equation*}
\end{lem}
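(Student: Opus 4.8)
The plan is to prove the four statements by systematically reducing (c) and (d) to (a) and (b), and proving (a) and (b) directly from the coset descriptions \eqref{G_u} and \eqref{G_v} of $G_u$ and $G_v$. First I would observe that for a contracting similarity $T(x)=r_T x + k$ (with no reflection, as assumed throughout), the gap intervals of $T(F_u)$ are exactly the images under $T$ of the gap intervals of $F_u$, so $G(T(F_u)) = r_T\, G_u$ as sets of lengths; hence $\max G(T(F_u)) = r_T \max G_u$, and (c) follows immediately from (a), and (d) from (b) in the same way. So the whole content is in parts (a) and (b).

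For part (a), I would work from \eqref{G_u}, namely
\begin{equation*}
G_u = g_u \left\langle 1,a \right\rangle \cup bdg_u\left\langle 1,a,bd,c\right\rangle  \cup  bg_v\left\langle 1,a,bd,c\right\rangle.
\end{equation*}
Here $\langle 1, a\rangle$ denotes the multiplicative semigroup generated by $1$ and $a$ (so its elements are products of the generators, each of which is $\leq 1$), and similarly for $\langle 1, a, bd, c\rangle$; since all generators lie in $(0,1]$, the supremum of each such semigroup is $1$, attained by the empty product (the generator $1$). Therefore the supremum of $g_u\langle 1,a\rangle$ is $g_u$, the supremum of $bdg_u\langle 1,a,bd,c\rangle$ is $bdg_u$, and the supremum of $bg_v\langle 1,a,bd,c\rangle$ is $bg_v$. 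Each of these suprema is actually attained (the identity element $1$ is in each semigroup), so $\max G_u$ exists and equals $\max\{g_u,\, bdg_u,\, bg_v\}$. Since $0<d<1$ gives $bdg_u < bg_u \leq \max\{g_u, bg_v\}$ (indeed $bdg_u < g_u$ because $bd<1$), the term $bdg_u$ is dominated and we get $\max G_u = \max\{g_u, bg_v\}$. Part (b) is entirely symmetric, using \eqref{G_v} and the fact that $bdg_v < g_v$.

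The one point that needs a little care — and which I would flag as the main obstacle, though it is minor — is justifying that the maxima in question are genuinely attained rather than merely being suprema: one must check that $g_u$, $bg_v$ etc. actually occur as gap lengths of $F_u$, i.e. that the element $1$ of each semigroup corresponds to a real gap interval. This is where the CSSC is used (it is quoted in the excerpt that under the CSSC the sets $G_u$ and $G_u^k$ are well-defined), and it is also implicit in the derivation of \eqref{G_u}–\eqref{G_v} in \cite[Proposition 2.3.4]{phdthesis_Boore}; the level-$1$ gaps $g_u$ (in $F_u$, from the geometry $[0,a]\cup[a+g_u,1]$) and, after the first two substitution steps, the gap of length $bg_v$ coming from $S_{e_2}(F_v)$, are explicitly present. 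A secondary routine point is confirming that every gap length of $F_u$ really is $\leq 1$ so that no generator can push the supremum above the listed values — but this is immediate since all of $a,b,c,d,g_u,g_v$ and their relevant products are less than $1$. Beyond these verifications the proof is just the elementary observation about suprema of multiplicatively generated semigroups with generators in $(0,1]$, together with the scaling behaviour of gaps under a similarity.
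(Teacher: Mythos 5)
Your proposal is correct and follows essentially the same route as the paper: parts (a) and (b) are read off from the coset descriptions \eqref{G_u} and \eqref{G_v} (the paper states they ``follow immediately'' from these, which is precisely your semigroup observation that all generators lie in $(0,1]$ so each coset's maximum is its leading coefficient, with $bdg_u<g_u$ and $bdg_v<g_v$ dominated), and parts (c) and (d) follow from $G(T(F_u))=r_T G_u$. Your extra care about attainment of the maxima is a reasonable elaboration of what the paper leaves implicit, but it is not a different argument.
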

\begin{proof}Parts (a) and (b) follow immediately by Equations \eqref{G_u} and \eqref{G_v}. 

Clearly $G\left(T(F_u))\right)=r_TG_u$. This proves (c) and the proof of (d) is similar. 
\end{proof}

If $F_u=F_v$ then the $2$-vertex IFS of Figure \ref{P22VertexUnitInterval} reduces to a standard IFS with $F_u$ the attractor for $\left\{S_{e_1}, S_{e_2}\right\}$, see Lemma \ref{P2R}. It's easy to ascertain whether or not $F_u=F_v$ as the next lemma shows.
\begin{lem}
\label{P2G}
For the attractor $(F_u, F_v)$ of the $2$-vertex IFS of Figure \ref{P22VertexUnitInterval}
\begin{equation*}
F_u=F_v \ \ \text{if and only if} \ \ a=c \  \text{and} \  b=d.
\end{equation*}
\end{lem}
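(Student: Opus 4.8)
The plan is to exploit the symmetry between the two invariance relations. Under the hypothesis $F_u=F_v=:F$, equations \eqref{Invariance equation1} and \eqref{Invariance equation2} become two \emph{different} presentations of the single set $F\subset[0,1]$,
\[
F=aF\cup(bF+a+g_u)\qquad\text{and}\qquad F=cF\cup(dF+c+g_v),
\]
each built from two orientation-preserving similarities whose images have disjoint convex hulls. The idea is to show that, unless these two presentations literally coincide, $F$ is forced to have a gap length that is too large.

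For the implication $a=c,\ b=d\ \Rightarrow\ F_u=F_v$: if $a=c$ and $b=d$ then $g_u=1-a-b=1-c-d=g_v$, and the explicit formulas for the maps give $S_{e_1}=S_{e_3}$ and $S_{e_2}=S_{e_4}$. Hence the pair $(F_v,F_u)$ also satisfies the invariance equation \eqref{Invariance equation3}, so by the uniqueness clause of the Contraction Mapping Theorem $(F_u,F_v)=(F_v,F_u)$, i.e.\ $F_u=F_v$.

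For the converse, assume $F_u=F_v=F$; after possibly interchanging the two vertices — which swaps the two presentations above and interchanges parts (a) and (b) of Lemma \ref{P2I} — we may assume $a\le c$. Consider first $a<c$. Since $c=c\cdot 1\in cF\subseteq F$ while $(a,1-b)$ is a gap of $F$ (because $F\subseteq[0,a]\cup[1-b,1]$ by the first presentation), the point $c$ cannot lie in this gap, forcing $c\ge 1-b$. This nesting lets me manufacture two new gap lengths of $F$: the gap $(c,1-d)$ (from the second presentation) lies inside $[1-b,1]$, on which $F$ equals the $b$-scaled copy $bF+(1-b)$ of itself (first presentation), so pulling it back through $x\mapsto bx+(1-b)$ produces a gap of $F$ of length $g_v/b$; symmetrically the gap $(a,1-b)$ lies inside $[0,c]$, on which $F=cF$, and pulling it back through $x\mapsto cx$ produces a gap of $F$ of length $g_u/c$. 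Now $G(F)=G_u$, so by Lemma \ref{P2I}(a) we have $\max G_u=\max\{g_u,bg_v\}$. From $g_v/b\le\max\{g_u,bg_v\}$ and $b<1$ the maximum cannot be $bg_v$ (otherwise $g_v/b\le bg_v$, impossible), so it equals $g_u$; but then $g_u/c\le g_u$ forces $c\ge 1$, a contradiction. Hence $a=c$. It remains to rule out $b\neq d$ when $a=c$: if, say, $b>d$, then $1-b=a+g_u\in F$, yet $1-b$ lies in the open interval $(a,1-d)$, which contains no point of $F$ since $F\subseteq[0,c]\cup[1-d,1]=[0,a]\cup[1-d,1]$ and $a<1-b<1-d$; this is a contradiction, and $b<d$ is ruled out the same way. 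Therefore $a=c$ and $b=d$.

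The substantive content — and the step needing the most care — is the converse, specifically the production of the auxiliary gap lengths $g_v/b$ and $g_u/c$: it rests on first securing the nesting inequality $c\ge 1-b$ and then tracking the gap intervals through the explicit level-$1$ maps of Figure \ref{P22VertexUnitInterval}. Once those two gap lengths are in hand, Lemma \ref{P2I}(a) closes the argument immediately; the remaining points (that the pulled-back intervals really are gap intervals of $F$, that all intervals involved are non-degenerate because $g_u,g_v>0$, and that $F\cap[0,c]=cF$ and $F\cap[1-b,1]=bF+(1-b)$ exactly) are routine bookkeeping with the formulas already recorded above.
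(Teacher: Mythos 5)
Your proof is correct, but the converse direction takes a genuinely different route from the paper's. The paper exploits the fact that $F_u=F_v$ forces $G_u=G_v$, so that Lemma \ref{P2I}(a),(b) give $\max\{g_u,bg_v\}=\max\{g_v,dg_u\}$; a short case check yields $g_u=g_v$, and then $[a,a+g_u]$ and $[c,c+g_v]$ are each the \emph{unique} maximum-length gap interval of the common set, hence coincide, giving $a=c$ and $b=d$ in one stroke. You instead work entirely at one vertex: after the (legitimate) normalisation $a\le c$, you show $c\ge 1-b$ and then pull the level-$1$ gaps back through $S_{e_2}^{-1}$ and $S_{e_3}^{-1}$ to manufacture gap lengths $g_v/b$ and $g_u/c$ of $F$, which are incompatible with $\max G_u=\max\{g_u,bg_v\}$; the residual case $a=c$, $b\neq d$ is killed by locating the point $1-b$ (or $1-d$) inside a gap of the other presentation. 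Both arguments are sound and both ultimately rest on Lemma \ref{P2I}(a) together with the identities $F\cap[0,c]=cF$ and $F\cap[1-b,1]=bF+(1-b)$ coming from \eqref{Invariance equation1}, \eqref{Invariance equation2} and the CSSC. The paper's version is shorter because comparing the two max-gap formulas immediately yields $g_u=g_v$; yours is longer but has the minor virtue of never invoking $\max G_v$, and its pull-back mechanism foreshadows the expansion arguments used later in the paper. (Your forward direction, via the uniqueness clause of the Contraction Mapping Theorem applied to the swapped pair $(F_v,F_u)$, is a clean alternative to the paper's level-$k$ interval argument.)
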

\begin{proof}(a) \emph{If $a=c$ and $b=d$ then $F_u=F_v$.} 

If $a=c$ and $b=d$ then $g_u=g_v$ with $S_{e_1}=S_{e_3}$ and $S_{e_2}=S_{e_4}$ which means the level-$k$ intervals must be the same at both vertices. That is $F_u^k =F_v^k$ for all $k \in \mathbb{N}\cup\left\{0\right\}$ and so
\begin{equation*}
F_u=\bigcap_{k=0}^\infty F_u^k=\bigcap_{k=0}^\infty F_v^k=F_v,
\end{equation*}
see \cite[Equation 2.2.2]{phdthesis_Boore}. 

\bigskip

(b) \emph{If $F_u=F_v$ then $a=c$ and $b=d$.} 

As $F_u=F_v$ implies $G_u=G_v$ it follows by Lemma \ref{P2I}(a) and (b) that 
\begin{equation*}
\max\left\{g_u, bg_v\right\}=\max G_u =\max G_v = \max\left\{g_v, dg_u\right\},
\end{equation*}
which is enough to prove $g_u=g_v$ and so $a+b=c+d$. 

If $g_u=g_v$ then $\max G_u=g_u>bg_v$ and $[a,a+g_u]$ is the unique maximum length gap interval of $F_u$. Similarly $[c,c+g_v]$ is the unique maximum length gap interval of $F_v$ so that $[a,a+g_u] = [c,c+g_v]$ with $a=c$ and $b=d$.
\end{proof}

We now introduce some convenient notation for a few useful gap intervals that will be needed in Section \ref{three}. Let
\begin{align*}
I_{g_u} &= [a,a+g_u], \\
I_{g_v} &= [c,c+g_v], \\
I_{bg_v} &= [1-bd-bg_v,1-bd], \\
I_{bc^kg_v} &= [a+g_u+bc^{k+1},a+g_u+bc^{k+1}+bc^kg_v], \\
I_{da^kg_u} &= [c+g_v+da^{k+1},c+g_v+da^{k+1}+da^kg_u], \\
I_{b^{i+1}d^{i}g_v} &= [1-b^{i+1}d^{i+1}-b^{i+1}d^{i}g_v,1-b^{i+1}d^{i+1}], \\
I_{b^{i+1}d^{i+1}g_u} &= [1-b^{i+2}d^{i+1}-b^{i+1}d^{i+1}g_u,1-b^{i+2}d^{i+1}].
\end{align*}
Each subscript is the interval length and each of these intervals is indicated by its subscript whenever it appears in a diagram.
  
\section{$F_u \neq F_v$ is sufficient}\label{three}

In this section we prove Theorem \ref{P2M}. First we show, in Lemmas \ref{P2O} and \ref{P2P}, that any (non-reflecting) similarities $S: F_u \rightarrow F_u$ or $S: F_u \rightarrow F_v$ are such that $S(I_u)$ can never span the gap between level-$1$ intervals. This property has a key role to play in the proof of Subsection \ref{three1} and throughout the rest of this paper.

The proofs of Lemmas \ref{P2O} and \ref{P2P} make use of the deceptively rich structure of these simple $2$-vertex systems. We illustrate the main steps in Figures \ref{induction1} and \ref{induction3}. Because $S(I_u)$ spans the gap between level-$1$ intervals, both of these figures are illustrating situations that turn out not to be possible. Also under the right circumstances each parameter can take any value in the range $(0,1)$. All this means that the lengths of image intervals depicted are not always believable but both figures do illustrate all the possible cases topologically and this is what is important for the proofs. 

We are able to prove directly that the various cases of (a) and (b) in Figures \ref{induction1} and \ref{induction3} can't occur. However a direct proof that case (c) can't happen doesn't seem to be possible. This is why we resort to proofs by induction in order to obtain the required contradictions.

\begin{lem}
\label{P2O}
For the $2$-vertex IFS of Figure \ref{P22VertexUnitInterval}, let $S: \mathbb{R} \rightarrow \mathbb{R}$ be a (non-reflecting) contracting similarity with contracting similarity ratio $r$, $0<r<1$, such that $S(F_u) \subset F_u$. 

Then either $S(I_u)\subset S_{e_1}(I_u)$ or $S(I_u)\subset S_{e_2}(I_v)$.
\end{lem}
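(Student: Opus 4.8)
The plan is to argue by contradiction. Suppose $S(I_u)$ is \emph{not} contained in either level-$1$ interval $S_{e_1}(I_u)=[0,a]$ or $S_{e_2}(I_v)=[a+g_u,1]$. Since $S(F_u)\subset F_u$ and $\{0,1\}\subset F_u$, the image $S(I_u)=[S(0),S(1)]$ is an interval whose endpoints lie in $F_u\subset[0,1]$; in particular $S(I_u)\subset[0,1]=I_u$. If $S(I_u)$ avoids both level-$1$ intervals it must sit inside the central gap interval $I_{g_u}=[a,a+g_u]$, but then its endpoints $S(0),S(1)\in F_u\cap(a,a+g_u)=\emptyset$ unless $S(I_u)$ is a single point, which is impossible as $r>0$. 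Alternatively $S(I_u)$ meets both level-$1$ intervals, which forces $S(I_u)\supset I_{g_u}$, i.e. $S(I_u)$ \emph{spans} the level-$1$ gap: $S(0)\leq a$ and $S(1)\geq a+g_u$. This is the only case that needs real work, and it is exactly the situation the paper flags (and illustrates in Figures \ref{induction1}, \ref{induction3}) as the one requiring an induction.

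So assume $S(I_u)$ spans the level-$1$ gap $[a,a+g_u]$. I would exploit the coset-semigroup description of the gap lengths \eqref{G_u} together with Lemma \ref{P2I}: since $S$ is a non-reflecting similarity with ratio $r$ and $S(F_u)\subset F_u$, every gap length of $S(F_u)$ is a gap length of $F_u$, so $r\cdot G_u\subseteq G_u$, and in particular $\max G(S(F_u))=r\max G_u\leq \max G_u$, giving $r<1$ (already known) but more usefully constraining which gaps of $F_u$ the scaled gaps of $S(F_u)$ can land on. The key geometric observation is that $S$ maps the \emph{first} level-$1$ gap of $F_u$ (the gap $I_{g_u}$ of length $g_u$, or $I_{bg_v}$, whichever is larger — by Lemma \ref{P2I}(a) the maximum gap of $F_u$ is $\max\{g_u,bg_v\}$) to a gap of $S(F_u)$ of length $r\max\{g_u,bg_v\}$, and this must coincide with some genuine gap of $F_u$ lying strictly between the point $S(0)\in F_u\cap[0,a]$ and $S(1)\in F_u\cap[a+g_u,1]$. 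Tracking the position and length of this image gap, and of the sub-level intervals immediately to its left and right, against the explicit list of gap intervals $I_{g_u}, I_{bg_v}, I_{bc^kg_v}, I_{b^{i+1}d^ig_v},\dots$ introduced at the end of Section \ref{two}, one sets up an induction: in cases (a) and (b) one gets an immediate numerical contradiction (a length inequality like $r g_u > $ something it must be $\leq$, or a forced equality $S(I_u)\subset S_{e_j}(\cdot)$ after all), while in case (c) the configuration reproduces a smaller copy of itself — $S$ restricted to a sub-interval again spans a level-$1$ gap of a scaled copy of $F_u$ — so that iterating drives the ratio of the spanning interval to the ambient interval below any positive bound, contradicting the fixed positive ratio $r$ of the similarity (equivalently, contradicting that $r\cdot$ (maximal gap) is a fixed positive number which must keep equalling gap lengths from the discrete-looking set \eqref{G_u}).

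The main obstacle, as the authors themselves signal, is case (c): there is no one-step contradiction, and the induction must be organised carefully so that the ``descended'' configuration is genuinely of the same type (a non-reflecting similarity image of $F_u$ spanning a level-$1$ gap inside a rescaled level-$1$ interval), with a strictly smaller scale parameter, so that the process cannot terminate and yet cannot continue forever. Getting the bookkeeping right — which gap of $F_u$ from the list $I_{g_u},I_{bg_v},I_{bc^kg_v},I_{da^kg_u},I_{b^{i+1}d^ig_v},I_{b^{i+1}d^{i+1}g_u}$ the image of the maximal gap of $S(F_u)$ must land on, and why that pins down $S(0)$ and $S(1)$ to the endpoints of a single level-$1$ interval in the limit — is where the ``deceptively rich structure'' of the system is used, and is the step I expect to consume most of the argument.
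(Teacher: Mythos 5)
Your reduction to the spanning case is correct and matches the paper's opening move, as does the first deduction that $I_{g_u}\subset S(I_u)$ forces $g_u\leq \max G(S(F_u))=r\max\{g_u,bg_v\}=rbg_v$ and hence $g_u<bg_v<g_v$. But from that point on the proposal is an outline that defers exactly the part of the argument carrying the mathematical content, and the tool you propose to lean on is not valid. It is not true that $r\cdot G_u\subseteq G_u$, nor that the $S$-image of a gap interval of $F_u$ ``must coincide with some genuine gap of $F_u$'': a gap interval $[p,q]$ of $F_u$ maps to a gap interval of $S(F_u)$, but $(S(p),S(q))$ may contain points of $F_u\setminus S(F_u)$, so it need not be a gap interval of $F_u$ at all. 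All one can extract are containment relations, which is why the paper's trichotomy is phrased as inclusions --- $I_{g_u}\subset S(I_{bg_v})$, or $S(I_{bg_v})\subset(S_{e_2}\circ S_{e_3}^{k})(I_v)$, or $S(I_{bg_v})\subset S_{e_1}(I_u)$ --- rather than as coincidences of gaps.

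Second, eliminating cases (a) and (b) is not a sequence of ``immediate numerical contradictions.'' One must first pin down $S(1)$, showing it lies in $(S_{e_2}\circ S_{e_3}^{k}\circ S_{e_4})(I_u)$ for a fixed $k\in\mathbb{N}$, and then each of the subcases (a)(i), (a)(ii), (b)(i), (b)(ii), (b)(iii) needs its own argument. In particular (b)(i) is a rigidity argument: the inclusions force $r=c^{k}$ and hence the identity $S\circ S_{e_2}=S_{e_2}\circ S_{e_3}^{k}$, and the contradiction comes from comparing the gap intervals immediately to the left of $S_{e_2}(I_v)$ and of $(S_{e_2}\circ S_{e_3}^{k})(I_v)$, which yields the impossible $I_{g_u}\subset S(I_{g_u})$; and (b)(ii) of the inductive step cannot be closed without explicitly invoking the induction hypothesis to place $I_{bc^{k}g_v}$ inside $(S\circ(S_{e_2}\circ S_{e_4})^{i})(I_u)$. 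You also have not formulated the induction statement itself, $P(i)\colon I_{g_u}\subset(S\circ(S_{e_2}\circ S_{e_4})^{i})(I_u)$, which is what makes the descent well-founded and delivers the final contradiction $g_u\leq r(bd)^{i}\to 0$. So while your overall strategy is the right one and agrees with the paper's, the proof as proposed is incomplete: the case analysis you acknowledge would ``consume most of the argument'' is the argument, and the shortcut you suggest for it rests on a false premise.
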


\begin{figure}[ht]
\begin{center}
\includegraphics[trim = 14mm 160mm 15mm 15mm, clip, scale =0.7]{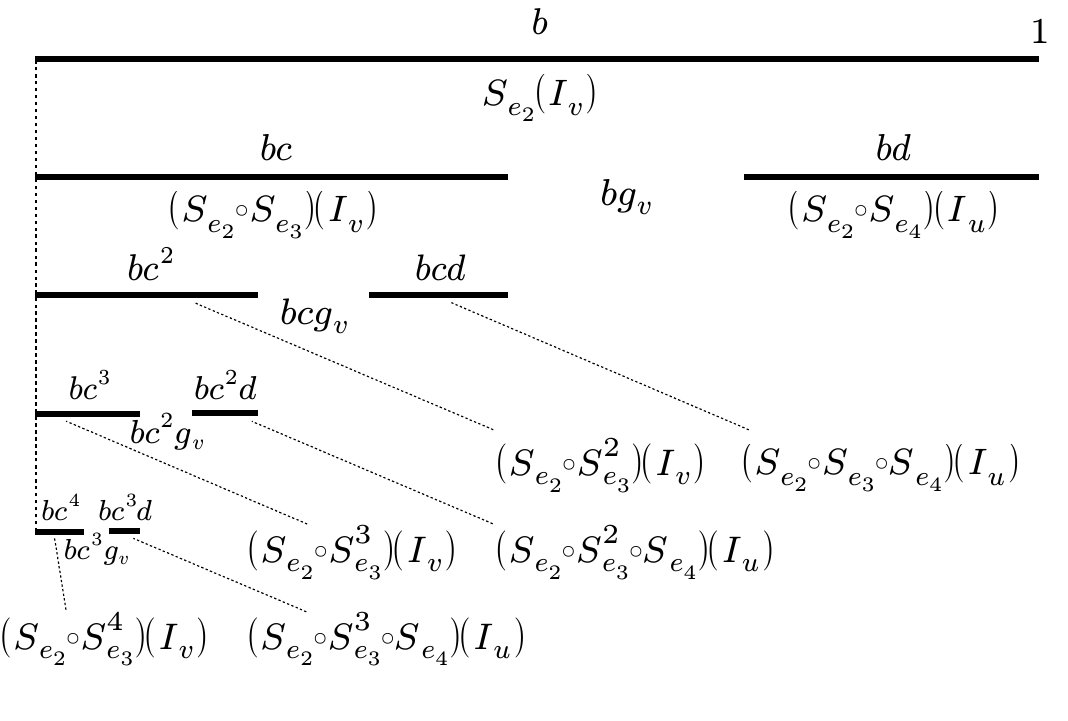}
\end{center}
\caption{The intervals $(S_{e_2}\circ S_{e_3}^{k+1})(I_v)$ and $(S_{e_2}\circ S_{e_3}^k\circ S_{e_4})(I_u)$ for $0\leq k \leq 3$. All the intervals shown are contained in $I_u$.}
\label{P2b}
\end{figure}

\begin{figure}[htb]
\begin{center}
\includegraphics[trim = 15mm 120mm 15mm 15mm, clip, scale =0.7]{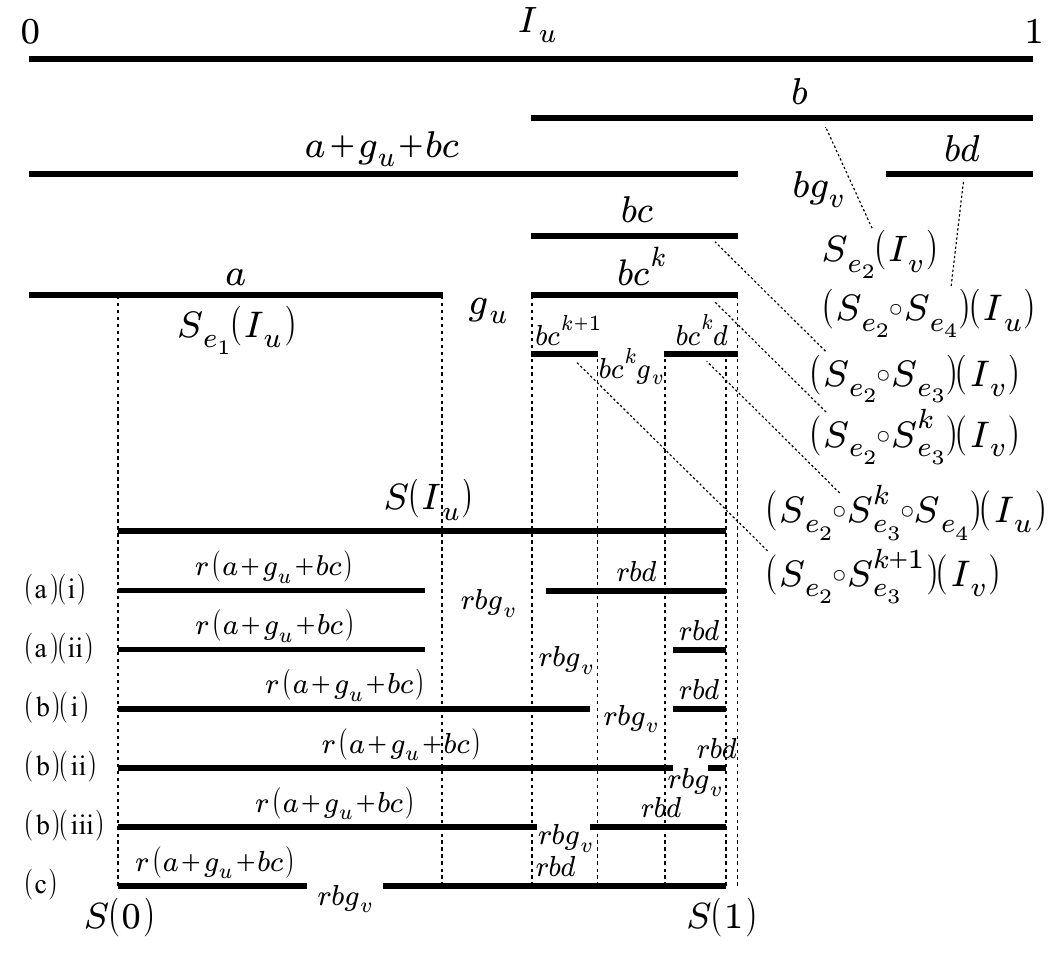}
\end{center}
\caption{The possible cases for Lemma \ref{P2O}. In the diagram $k=1$.}
\label{induction1}
\end{figure}

\begin{proof} For a contradiction we assume that $S(I_u)$ spans the gap between the level-$1$ intervals with $I_{g_u}\subset S(I_u)$, $S(0) \in S_{e_1}(I_u)$ and $S(1) \in S_{e_2}(I_v)$, as illustrated in Figure \ref{induction1}.

As $I_{g_u}\subset S(I_u)$ it follows that
\begin{align*}
g_u &\leq \max G(S(F_u)) \\
&= \max \left\{rg_u, rbg_v\right\} &&(\text{by Lemma \ref{P2I}(c)}), \\
&= rbg_v  &&(\text{as }rg_u < g_u),
\end{align*}
which proves 
\begin{equation}
\label{P2OA}
g_u < bg_v < g_v
\end{equation}
and ensures that $I_{bg_v}$ is the unique maximum length gap interval of $F_u$. 

It must be the case that $S(1) \in (S_{e_2}\circ S_{e_3})(I_v)$ as all gap intervals in  $S(F_u)$ are of shorter length than $I_{bg_v}$.  Also $S(1) > a+g_u$ since there are points of  $F_u$ as close as we like to $1$ on its left and there are no such points immediately to the left of  $a+g_u$. It follows that there exists a $k \in \mathbb{N}$ such that 
\begin{equation*}
a+g_u+bc^{k+1} = (S_{e_2}\circ S_{e_3}^{k+1})(1) < S(1) \leq (S_{e_2}\circ S_{e_3}^{k})(1) =  a+g_u+bc^{k}.
\end{equation*}
For such $k \in \mathbb{N}$
\begin{equation}
\label{S1}
S(1) \in (S_{e_2}\circ S_{e_3}^k\circ S_{e_4})(I_u)
\end{equation}
as should be clear from Figure \ref{P2b}. For the rest of this proof we consider $k$ in \eqref{S1} to be fixed. The example shown in Figure \ref{induction1} has $k=1$.

For $i\in \mathbb{N}$ let $P(i)$ be the statement
\begin{equation*}
I_{g_u}\subset (S\circ (S_{e_2}\circ S_{e_4})^i)(I_u).
\end{equation*}
The self-similar properties of the system permit a proof by induction that $P(i)$ holds for all $i\in \mathbb{N}$, and this is enough for a contradiction as it forces $g_u=0$.

\medskip

\emph{Induction base}. \\
\indent There are just three possible destinations for $I_{bg_v}$ under $S$
\begin{align*}
&\textup{(a)} \quad I_{g_u}\subset S(I_{bg_v}), \\
&\textup{(b)} \quad S(I_{bg_v})\subset  (S_{e_2}\circ S_{e_3}^{k})(I_v), \\
&\textup{(c)} \quad S(I_{bg_v})\subset  S_{e_1}(I_u).
\end{align*}
We now prove that neither (a) nor (b) can happen which leaves (c). This is enough to prove $P(1)$ as should be clear from Figure \ref{induction1}(c). We consider all the possible cases that can arise for (a) and (b) as follows. 

(a)(i) $ I_{g_u}\subset S(I_{bg_v})$ \emph{and} $I_{bc^kg_v} \subset (S\circ S_{e_2}\circ S_{e_4})(I_u) \subset (S_{e_2}\circ S_{e_3}^{k})(I_v)$.

This is the situation shown in Figure \ref{induction1}(a)(i). Here $rbd \leq bc^k$ and it must be the case that
\begin{align*}
bc^kg_v &\leq \max G((S\circ S_{e_2}\circ S_{e_4})(F_u)) \\
&= \max \left\{rbdg_u, rb^2dg_v\right\} &&(\text{by Lemma \ref{P2I}(c)}) \\
&= rb^2dg_v  &&(\text{by } \eqref{P2OA}) \\
&\leq b^2c^kg_v
\end{align*}
which is a contradiction.

(a)(ii) $ I_{g_u}\subset S(I_{bg_v})$ \emph{and} $(S\circ S_{e_2}\circ S_{e_4})(I_u) \subset (S_{e_2}\circ S_{e_3}^{k}\circ S_{e_4})(I_u)$.

Clearly $rbd \leq bc^kd$ so that $r \leq c^k$. As illustrated in Figure \ref{induction1}(a)(ii)
\begin{equation*}
I_{g_u}\cup  (S_{e_2}\circ S_{e_3}^{k+1})(I_v) \cup I_{bc^kg_v} \subset S(I_{bg_v})
\end{equation*}
which implies 
\begin{equation*}
g_u + bc^{k+1} + bc^kg_v \leq rbg_v \leq bc^kg_v
\end{equation*}
and is another contradiction.

(b)(i) $S(I_{bg_v})\subset  (S_{e_2}\circ S_{e_3}^{k})(I_v)$, $(S \circ S_{e_2} \circ  S_{e_4})(I_u) \subset  (S_{e_2}\circ S_{e_3}^{k}\circ S_{e_4})(I_u)$  \emph{and} $I_{bc^kg_v} \subset S(I_{bg_v})$.

In this case $bc^kg_v \leq rbg_v$ and $rbd \leq bc^kd$ which implies $r=c^k$. This forces the following equalities
\begin{align*}
(S \circ S_{e_2} \circ  S_{e_4})(I_u) &= (S_{e_2}\circ S_{e_3}^{k}\circ S_{e_4})(I_u) \\
S(I_{bg_v}) &= I_{bc^kg_v} \\
 (S \circ S_{e_2} \circ S_{e_3})(I_v) &=  (S_{e_2}\circ S_{e_3}^{k+1})(I_v)
\end{align*}
which means that $S \circ S_{e_2} = S_{e_2}\circ S_{e_3}^{k}$. However immediately to the left of  $S_{e_2}(I_v)$ is the gap interval $I_{g_u}$ and immediately to the left of $ (S_{e_2}\circ S_{e_3}^{k})(I_v)$ there is also the gap interval $I_{g_u}$  as shown in Figure \ref{induction1}(b)(i). Therefore
\begin{equation*}
I_{g_u} \subset S(I_{g_u})
\end{equation*}
which is impossible.

(b)(ii) $S(I_{bg_v})\subset  (S_{e_2}\circ S_{e_3}^{k})(I_v)$ \emph{and} $S(I_{bg_v})\subset (S_{e_2}\circ S_{e_3}^{k}\circ S_{e_4})(I_u)$.

As shown in Figure \ref{induction1}(b)(ii), $S(I_{bg_v})\subset (S_{e_2}\circ S_{e_3}^{k}\circ S_{e_4})(I_u)$ means that
\begin{equation*}
S(I_{bg_v}) \cup (S \circ S_{e_2} \circ  S_{e_4})(I_u)  \subset  (S_{e_2}\circ S_{e_3}^{k}\circ S_{e_4})(I_u)
\end{equation*}
and so $rb(g_v + d)\leq bc^kd$ with $r<c^k$. As can be seen from Figure \ref{induction1}, it is also the case that $I_{bc^kg_v}\subset S(I_u)$ which implies
\begin{align*}
bc^kg_v &\leq \max G(S(F_u)) \\
&= \max \left\{rg_u, rbg_v\right\} &&(\text{by Lemma \ref{P2I}(c)}) \\
&= rbg_v  &&(\text{by } \eqref{P2OA}) \\
&< bc^kg_v 
\end{align*}
which is another contradiction.

(b)(iii) $S(I_{bg_v})\subset  (S_{e_2}\circ S_{e_3}^{k})(I_v)$ \emph{and} $S(I_{bg_v})\subset (S_{e_2}\circ S_{e_3}^{k+1})(I_v)$.

The fact that $S(I_{bg_v})\subset (S_{e_2}\circ S_{e_3}^{k+1})(I_v)$ means that, as shown in Figure \ref{induction1}(b)(iii), $I_{bc^kg_v} \subset (S\circ S_{e_2}\circ S_{e_4})(I_u) \subset (S_{e_2}\circ S_{e_3}^{k})(I_v)$. We now obtain a contradiction by applying the proof given in (a)(i).

We have considered all the possible cases for (a) and (b) and shown that none of them can actually occur. This leaves case (c) and $S(I_{bg_v})\subset  S_{e_1}(I_u)$ which implies
\begin{equation*}
I_{g_u}\subset (S\circ (S_{e_2}\circ S_{e_4})^1)(I_u).
\end{equation*}
This proves $P(1)$.

\medskip

\begin{figure}[htb]
\begin{center}
\includegraphics[trim = 12mm 180mm 18mm 15mm, clip, scale =0.7]{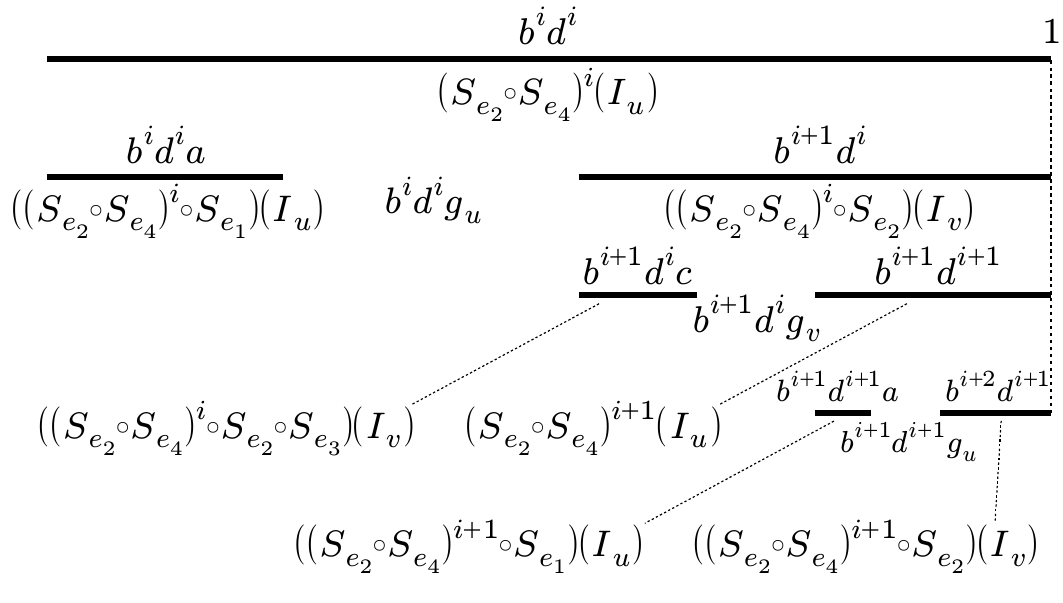}
\end{center}
\caption{The gap intervals $I_{b^{i+1}d^ig_v}$ and $I_{b^{i+1}d^{i+1}g_u}$ together with other subintervals of  $(S_{e_2}\circ S_{e_4})^i(I_u)$. All the intervals shown are contained in $I_u$.}
\label{induction2}
\end{figure}

\emph{Induction hypothesis}.

For $i \in \mathbb{N}$ we assume $P(i)$ is true so that
\begin{equation*}
I_{g_u}\subset (S\circ (S_{e_2}\circ S_{e_4})^i)(I_u).
\end{equation*}

\medskip

\emph{Induction step}.

We follow the proof given for $P(1)$ but use $I_{b^{i+1}d^ig_v}$ in place of $I_{bg_v}$. The gap interval $I_{b^{i+1}d^ig_v}$ is shown in Figure \ref{induction2}. The reader may still consult Figure \ref{induction1} in each of the following cases by simply replacing the lengths $r(a+g_u+bc)$, $rbg_v$ and $rbd$ shown there by $r(1-b^{i+1}d^ig_v-b^{i+1}d^{i+1})$, $rb^{i+1}d^ig_v$ and $rb^{i+1}d^{i+1}$ respectively. These interval lengths, before the map by $S$, are also illustrated in Figure \ref{induction2}. In what follows we replace $S(I_{bg_v})$ and $(S\circ S_{e_2}\circ S_{e_4})(I_u)$ by their counterparts $S(I_{b^{i+1}d^ig_v})$ and $(S\circ (S_{e_2}\circ S_{e_4})^{i+1})(I_u)$.

As before there are just three possible destinations for $I_{b^{i+1}d^ig_v}$ under $S$
\begin{align*}
&\textup{(a)} \quad I_{g_u}\subset S(I_{b^{i+1}d^ig_v}), \\
&\textup{(b)} \quad S(I_{b^{i+1}d^ig_v})\subset  (S_{e_2}\circ S_{e_3}^{k})(I_v), \\
&\textup{(c)} \quad S(I_{b^{i+1}d^ig_v})\subset  S_{e_1}(I_u).
\end{align*}
Again we show that neither (a) nor (b) can happen.

(a)(i) $ I_{g_u}\subset S(I_{b^{i+1}d^ig_v})$ \emph{and} $I_{bc^kg_v} \subset (S\circ (S_{e_2}\circ S_{e_4})^{i+1})(I_u) \subset (S_{e_2}\circ S_{e_3}^{k})(I_v)$.

Here $rb^{i+1}d^{i+1} \leq bc^k$ and
\begin{align*}
bc^kg_v &\leq \max G((S\circ (S_{e_2}\circ S_{e_4})^{i+1})(F_u)) \\
&= \max \left\{rb^{i+1}d^{i+1}g_u, rb^{i+2}d^{i+1}g_v\right\} &&(\text{by Lemma \ref{P2I}(c)}) \\
&=rb^{i+2}d^{i+1}g_v  &&(\text{by } \eqref{P2OA}) \\
&\leq b^2c^kg_v
\end{align*}
which is a contradiction.

(a)(ii) $ I_{g_u}\subset S(I_{b^{i+1}d^ig_v})$ \emph{and} $(S\circ (S_{e_2}\circ S_{e_4})^{i+1})(I_u) \subset (S_{e_2}\circ S_{e_3}^{k}\circ S_{e_4})(I_u)$.

It follows that $rb^{i+1}d^{i+1} \leq bc^kd$ so that $rb^{i}d^{i} \leq c^k$. Also
\begin{equation*}
I_{g_u}\cup  (S_{e_2}\circ S_{e_3}^{k+1})(I_v) \cup I_{bc^kg_v} \subset S(I_{b^{i+1}d^ig_v})
\end{equation*}
so that
\begin{equation*}
g_u + bc^{k+1} + bc^kg_v \leq rb^{i+1}d^ig_v \leq bc^kg_v
\end{equation*}
which is impossible.

(b)(i) $S(I_{b^{i+1}d^ig_v})\subset  (S_{e_2}\circ S_{e_3}^{k})(I_v)$, $(S\circ (S_{e_2}\circ S_{e_4})^{i+1})(I_u) \subset  (S_{e_2}\circ S_{e_3}^{k}\circ S_{e_4})(I_u)$  \emph{and} $I_{bc^kg_v} \subset S(I_{b^{i+1}d^ig_v})$.

Here $bc^kg_v \leq rb^{i+1}d^ig_v$ and $rb^{i+1}d^{i+1} \leq bc^kd$ implies $rb^{i}d^{i}=c^k$ which forces the following equalities
\begin{align*}
(S\circ (S_{e_2}\circ S_{e_4})^{i+1})(I_u) &= (S_{e_2}\circ S_{e_3}^{k}\circ S_{e_4})(I_u) \\
S(I_{b^{i+1}d^ig_v}) &= I_{bc^kg_v} \\
 (S\circ (S_{e_2}\circ S_{e_4})^{i} \circ S_{e_2}\circ S_{e_3})(I_v) &=  (S_{e_2}\circ S_{e_3}^{k+1})(I_v)
\end{align*}
which imply $S\circ (S_{e_2}\circ S_{e_4})^{i} \circ S_{e_2} = S_{e_2}\circ S_{e_3}^{k}$. However immediately to the left of  $((S_{e_2}\circ S_{e_4})^{i} \circ S_{e_2})(I_v)$ is the gap interval $I_{b^{i}d^{i}g_u}$, see Figure \ref{induction2}, and  immediately to the left of $(S_{e_2}\circ S_{e_3}^{k})(I_v)$ is the gap interval $I_{g_u}$, see Figure \ref{induction1}. Therefore it must be the case that
\begin{equation*}
I_{g_u} \subset S(I_{b^{i}d^{i}g_u})
\end{equation*}
which can't happen.

(b)(ii) $S(I_{b^{i+1}d^ig_v})\subset  (S_{e_2}\circ S_{e_3}^{k})(I_v)$ \emph{and} $S(I_{b^{i+1}d^ig_v})\subset (S_{e_2}\circ S_{e_3}^{k}\circ S_{e_4})(I_u)$.

In this case $S(I_{b^{i+1}d^ig_v})\subset (S_{e_2}\circ S_{e_3}^{k}\circ S_{e_4})(I_u)$ implies that
\begin{equation*}
S(I_{b^{i+1}d^ig_v}) \cup (S\circ (S_{e_2}\circ S_{e_4})^{i+1})(I_u)  \subset  (S_{e_2}\circ S_{e_3}^{k}\circ S_{e_4})(I_u)
\end{equation*}
so $rb^{i+1}d^i(g_v + d)\leq bc^kd$ with $rb^id^i<c^k$. Here we need to explicitly invoke the induction hypothesis which states that $I_{g_u}\subset (S\circ (S_{e_2}\circ S_{e_4})^i)(I_u)$. This ensures that $(S\circ (S_{e_2}\circ S_{e_4})^i)(0) \in S_{e_1}(I_u)$ and $(S\circ (S_{e_2}\circ S_{e_4})^i)(1) = S(1)\in  (S_{e_2}\circ S_{e_3}^{k}\circ S_{e_4})(I_u)$ by \eqref{S1}, so that $I_{bc^kg_v} \subset (S\circ (S_{e_2}\circ S_{e_4})^i)(I_u)$. It follows that
\begin{align*}
bc^kg_v &\leq \max G((S\circ (S_{e_2}\circ S_{e_4})^i)(F_u)) \\
&= \max \left\{rb^id^ig_u, rb^{i+1}d^ig_v\right\} &&(\text{by Lemma \ref{P2I}(c)}) \\
&= rb^{i+1}d^ig_v  &&(\text{by } \eqref{P2OA}) \\
&< bc^kg_v 
\end{align*}
and this is another contradiction.

(b)(iii) $S(I_{b^{i+1}d^ig_v})\subset  (S_{e_2}\circ S_{e_3}^{k})(I_v)$ \emph{and} $S(I_{b^{i+1}d^ig_v})\subset (S_{e_2}\circ S_{e_3}^{k+1})(I_v)$.

The fact that $S(I_{b^{i+1}d^ig_v})\subset (S_{e_2}\circ S_{e_3}^{k+1})(I_v)$ means that $I_{bc^kg_v} \subset (S\circ (S_{e_2}\circ S_{e_4})^{i+1})(I_u)\subset (S_{e_2}\circ S_{e_3}^{k})(I_v)$ and this can't happen as was shown in (a)(i).

This covers all the possible cases and we conclude that neither (a) nor (b) occurs which leaves case (c) and so $ S(I_{b^{i+1}d^ig_v})\subset  S_{e_1}(I_u)$. This is enough to prove that 
\begin{equation*}
I_{g_u}\subset (S\circ (S_{e_2}\circ S_{e_4})^{i+1})(I_u)
\end{equation*}
and so $P(i)$ implies $P(i+1)$ for all $i \in \mathbb{N}$ which completes the induction step.
\medskip

 It follows that $P(i)$ holds for all $i \in \mathbb{N}$. Therefore $g_u=0$ and this is our final contradiction.
\end{proof}

The proof of Lemma \ref{P2P} follows the same pattern as the proof of Lemma \ref{P2O}.
\begin{lem}
\label{P2P}
For the $2$-vertex IFS of Figure \ref{P22VertexUnitInterval}, let $S: \mathbb{R} \rightarrow \mathbb{R}$ be a (non-reflecting) contracting similarity with contracting similarity ratio $r$, $0<r<1$, such that $S(F_u) \subset F_v$. 

Then either $S(I_u)\subset S_{e_3}(I_v)$ or $S(I_u)\subset S_{e_4}(I_u)$.
\end{lem}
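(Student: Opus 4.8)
The plan is to mimic the proof of Lemma \ref{P2O} essentially verbatim, carrying the structure over from $F_u\to F_u$ maps to $F_u\to F_v$ maps. First I would assume for a contradiction that $S(I_u)$ spans the gap between the level-$1$ intervals of $F_v$, so that $I_{g_v}\subset S(I_u)$ with $S(0)\in S_{e_3}(I_v)=[0,c]$ and $S(1)\in S_{e_4}(I_u)=[c+g_v,1]$. Since $I_{g_v}\subset S(I_u)$, Lemma \ref{P2I}(c) gives $g_v\leq\max G(S(F_u))=\max\{rg_u,rbg_v\}$; as $rbg_v<g_v$ this forces $g_v<rg_u<g_u$, the analogue of \eqref{P2OA}, and makes $I_{da^kg_u}$-type intervals the relevant long gaps. (Here I would need to be slightly careful about which gap interval of $F_v$ is the unique maximum-length one, using Lemma \ref{P2I}(d): $\max G_v=\max\{g_v,dg_u\}=dg_u$, and $I_{dg_u}:=[c+g_v+da,c+g_v+da+dg_u]$ is the unique longest gap of $F_v$, contained in $S_{e_4}(S_{e_1}(I_u))$.)

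Next, because every gap in $S(F_u)$ has length $<dg_u$, the point $S(1)$ must lie in $(S_{e_4}\circ S_{e_1})(I_u)$, and since there are points of $F_v$ arbitrarily close to $1$ on the left while there are none immediately left of $c+g_v$, we get $S(1)>c+g_v$; hence there is a fixed $k\in\mathbb{N}$ with $(S_{e_4}\circ S_{e_1}^{k+1})(1)<S(1)\leq(S_{e_4}\circ S_{e_1}^{k})(1)$, and then $S(1)\in(S_{e_4}\circ S_{e_1}^k\circ S_{e_2})(I_v)$ — the exact mirror of \eqref{S1}, obtained from the $F_v$-analogue of Figure \ref{P2b} (using the invariance equation \eqref{Invariance equation2} and $F_u=S_{e_1}(F_u)\cup S_{e_2}(F_v)$ inside $S_{e_4}(I_u)$). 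One then sets up the induction statement $P(i)$: $I_{g_v}\subset(S\circ(S_{e_4}\circ S_{e_2})^i)(I_v)$ for all $i\in\mathbb{N}$, which again yields the contradiction $g_v=0$. The base case and inductive step each split into the same three destinations for the relevant long gap interval (of length $da^kg_u$, resp. $d^{i+1}a^i g_u$ by symmetry with the $b\leftrightarrow d$, $a\leftrightarrow c$ swap): (a) it covers $I_{g_v}$, (b) it lands inside $(S_{e_4}\circ S_{e_1}^k)(I_u)$, (c) it lands inside $S_{e_3}(I_v)$; subcases (a)(i)--(ii) and (b)(i)--(iii) are refuted by the same maximum-gap-length inequalities and the same ``$I_{g_v}\subset S(I_{g_v})$ is impossible'' argument as in Lemma \ref{P2O}, with every parameter role interchanged under $a\leftrightarrow c$, $b\leftrightarrow d$, $u\leftrightarrow v$, $e_1\leftrightarrow e_3$, $e_2\leftrightarrow e_4$.

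The one genuine asymmetry I should watch for, and the part I expect to be the real obstacle, is that the two vertices are \emph{not} actually symmetric: the self-map relevant to $F_v$ is built from $(S_{e_4}\circ S_{e_2})$ but $S_{e_4}$ maps $I_u\to I_v$ while the analogous composition in Lemma \ref{P2O} started with $S_{e_2}$ which maps $I_v\to I_u$, and the hypothesis here is $S(F_u)\subset F_v$ rather than $S(F_v)\subset F_v$. So the bookkeeping of which component sits at which vertex inside each nested image has to be redone rather than quoted; in particular the gap-interval notation introduced before Section \ref{three} is written for $F_u$, and I would need the mirrored versions (effectively $R$-conjugates together with $a\leftrightarrow c$, $b\leftrightarrow d$) of $I_{bc^kg_v}$, $I_{b^{i+1}d^ig_v}$, $I_{b^{i+1}d^{i+1}g_u}$, namely intervals of lengths $da^kg_u$, $d^{i+1}a^ig_u$, $d^{i+1}a^{i+1}g_v$ sitting inside $(S_{e_4}\circ S_{e_2})^i(I_v)$, playing the roles that Figures \ref{induction1} and \ref{induction2} play in Lemma \ref{P2O}. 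Once these mirrored intervals and the mirrored invariance identity are in place, every inequality in the case analysis transforms term-by-term, the contradictions survive, and case (c) forces $P(i)\Rightarrow P(i+1)$, completing the induction and the proof. I would present this by saying ``the proof is obtained from that of Lemma \ref{P2O} by interchanging the roles of $u$ and $v$, of $a$ and $c$, and of $b$ and $d$, replacing $S_{e_1},S_{e_2}$ by $S_{e_3},S_{e_4}$ throughout and reflecting the relevant gap intervals; we indicate the changes and leave the remaining details to the reader,'' rather than rewriting all seven subcases in full.
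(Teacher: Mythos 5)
Your opening moves match the paper's proof exactly: the contradiction hypothesis $I_{g_v}\subset S(I_u)$ with $S(0)\in S_{e_3}(I_v)$ and $S(1)\in S_{e_4}(I_u)$, the deduction $g_v\leq rg_u$ and hence $bg_v<g_v<g_u$, and the location of $S(1)$ in $(S_{e_4}\circ S_{e_1}^k\circ S_{e_2})(I_v)$ for a fixed $k$. But the induction, which is the heart of the proof, is set up on the wrong side. Your statement $P(i)$ reads $I_{g_v}\subset (S\circ(S_{e_4}\circ S_{e_2})^i)(I_v)$, and your ``mirrored'' gap intervals of lengths $da^kg_u$, $d^{i+1}a^ig_u$, $d^{i+1}a^{i+1}g_v$ are placed inside $(S_{e_4}\circ S_{e_2})^i(I_v)$. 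Since $S_{e_2}$ maps $I_v$ into $I_u$ and $S_{e_4}$ maps $I_u$ into $I_v$, the set $(S_{e_4}\circ S_{e_2})^i(F_v)$ is a subset of $F_v$, and the intervals you propose to track are gap intervals of $F_v$. The hypothesis $S(F_u)\subset F_v$ gives no control over $S$ restricted to $F_v$, so none of the destination cases (a), (b), (c) can be analysed for these intervals: every subcase relies on knowing that $S$ sends the endpoints of the tracked gap interval into $F_v$, which you only know for points of $F_u$. Relatedly, the wholesale swap $u\leftrightarrow v$, $a\leftrightarrow c$, $b\leftrightarrow d$, $e_1\leftrightarrow e_3$, $e_2\leftrightarrow e_4$ that you propose to apply to Lemma \ref{P2O} transforms its hypothesis $S(F_u)\subset F_u$ into $S(F_v)\subset F_v$, i.e.\ it proves Lemma \ref{P2Ob}, not Lemma \ref{P2P} --- this is exactly how the paper obtains Lemmas \ref{P2Ob} and \ref{P2Pb} by symmetry, so Lemma \ref{P2P} cannot be the mirror image of Lemma \ref{P2O}.

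The correct resolution of the asymmetry you flagged is the opposite of the one you chose: the domain-side bookkeeping is \emph{not} mirrored. The paper's induction statement is $Q(i):\ I_{g_v}\subset (S\circ(S_{e_2}\circ S_{e_4})^i\circ S_{e_2})(I_v)$, where $((S_{e_2}\circ S_{e_4})^i\circ S_{e_2})(F_v)\subset F_u$ so that $S$ is controlled there, and the gaps whose destinations under $S$ are analysed are $I_{g_u}$ (now the unique longest gap of $F_u$, because \eqref{P2PA} reverses the inequality of \eqref{P2OA}) in the base case and $I_{b^{i+1}d^{i+1}g_u}$ in the inductive step --- the very same gap intervals of $F_u$ pictured in Figure \ref{induction2}. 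Only the target-side structure is mirrored: $S_{e_4}\circ S_{e_1}^k$ and the gaps $I_{da^kg_u}$ of $F_v$ replace $S_{e_2}\circ S_{e_3}^k$ and $I_{bc^kg_v}$. Two smaller slips: your claim that $I_{dg_u}$ is the unique longest gap of $F_v$ requires $dg_u>g_v$, which does not follow from $g_v\leq rg_u$ (and is not needed); and $S(1)\in(S_{e_4}\circ S_{e_1})(I_u)$ is not what holds --- the correct statement is the one you give next, $S(1)\in(S_{e_4}\circ S_{e_1}^k\circ S_{e_2})(I_v)$. As written, your plan would not yield a proof without replacing the induction statement and the tracked intervals.
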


\begin{proof} For a contradiction we assume that $S(I_u)$ spans the gap between the level-$1$ intervals with $I_{g_v}\subset S(I_u)$, $S(0) \in S_{e_3}(I_v)$ and $S(1) \in S_{e_4}(I_u)$, as shown in Figure \ref{induction3}.

\begin{figure}[htb]
\begin{center}
\includegraphics[trim = 5mm 125mm 17mm 15mm, clip, scale =0.7]{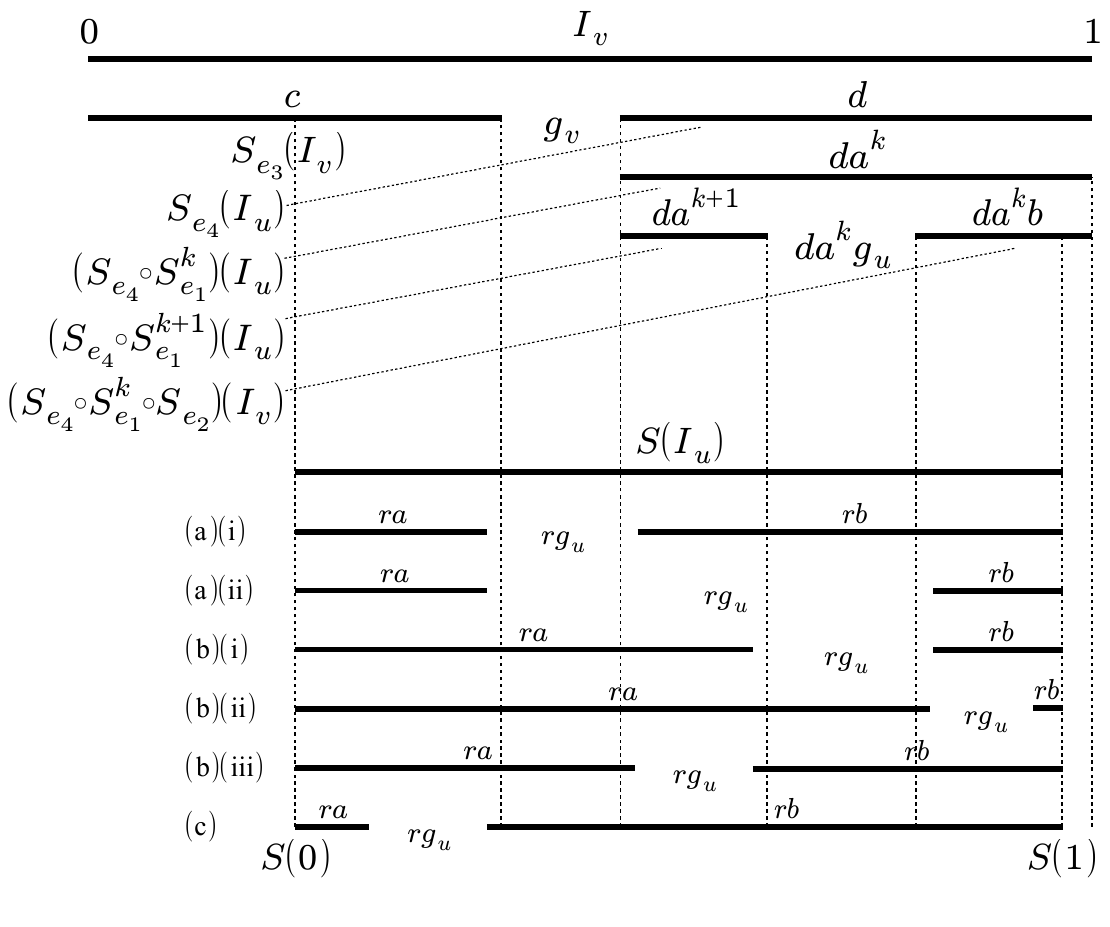}
\end{center}
\caption{The possible cases for Lemma \ref{P2P}. In the diagram $k=0$.}
\label{induction3}
\end{figure}

As $I_{g_v}\subset S(I_u)$ it follows that
\begin{align*}
g_v &\leq \max G(S(F_u)) \\
&= \max \left\{rg_u, rbg_v\right\} &&(\text{by Lemma \ref{P2I}(c)}), \\
&= rg_u  &&(\text{as }rbg_v < g_v),
\end{align*}
which implies
\begin{equation}
\label{P2PA}
bg_v < g_v < g_u
\end{equation}
and so $I_{g_u}$ is the unique maximum length gap interval of $F_u$. 

As there are points of  $F_u$ as close as we like to $1$ on its left and there are no such points immediately to the left of  $c+g_v$ it follows that $S(1) > c+g_v$ and there exists a $k \in \mathbb{N}\cup \left\{0\right\}$ such that 
\begin{equation*}
c+g_v+da^{k+1} = (S_{e_4}\circ S_{e_1}^{k+1})(1) < S(1) \leq (S_{e_4}\circ S_{e_1}^{k})(1) =  c+g_v+da^{k}.
\end{equation*}
For such $k\in \mathbb{N}\cup \left\{0\right\}$ it must be the case that 
\begin{equation}
\label{S12}
S(1) \in (S_{e_4}\circ S_{e_1}^k\circ S_{e_2})(I_v).
\end{equation}
The situation is the same as that shown in  Figure \ref{P2b} if we replace the symbols $b, c, d, S_{e_2}, S_{e_3}, S_{e_4}, I_v, I_u$ by $d, a, b, S_{e_4}, S_{e_1}, S_{e_2}, I_u, I_v$ respectively. For the rest of this proof we consider $k$  in \eqref{S12} to be fixed. An example with $k=0$ is illustrated in Figure \ref{induction3}.

For $i\in \mathbb{N}\cup \left\{0\right\}$ let $Q(i)$ be the statement
\begin{equation*}
I_{g_v}\subset (S\circ (S_{e_2} \circ S_{e_4})^i \circ S_{e_2})(I_v).
\end{equation*}
Again we prove by induction that $Q(i)$ holds for all $i\in  \mathbb{N}\cup \left\{0\right\}$ which is enough for a contradiction as it forces $g_v=0$.
\medskip

\emph{Induction base}.

There are just three possible destinations for $I_{g_u}$ under $S$
\begin{align*}
&\textup{(a)} \quad I_{g_v}\subset S(I_{g_u}), \\
&\textup{(b)} \quad S(I_{g_u})\subset  (S_{e_4}\circ S_{e_1}^{k})(I_u), \\
&\textup{(c)} \quad S(I_{g_u})\subset  S_{e_3}(I_v).
\end{align*}
We prove that neither (a) nor (b) can happen which leaves (c). This is enough to prove $Q(0)$ as should be clear from Figure \ref{induction3}(c). We consider all the possibilities that can arise for (a) and (b) and as we arrive at a contradiction in each of these cases we won't point this out again. 

(a)(i) $ I_{g_v}\subset S(I_{g_u})$ \emph{and} $I_{da^kg_u} \subset (S\circ S_{e_2})(I_v) \subset (S_{e_4}\circ S_{e_1}^{k})(I_u)$.

This is the situation shown in Figure \ref{induction3}(a)(i). Here $rb \leq da^k$ and it must be the case that
\begin{align*}
da^kg_u &\leq \max G((S\circ S_{e_2})(F_v)) \\
&= \max \left\{rbg_v, rbdg_u\right\} &&(\text{by Lemma \ref{P2I}(d)}) \\
&= rbg_v  &&(\text{as } rbdg_u \leq d^2a^kg_u < da^kg_u)  \\
&\leq da^kg_v   \\
&< da^kg_u  &&(\text{by } \eqref{P2PA}) .
\end{align*}

(a)(ii) $ I_{g_v}\subset S(I_{g_u})$ \emph{and} $(S\circ S_{e_2})(I_v) \subset (S_{e_4}\circ S_{e_1}^{k}\circ S_{e_2})(I_v)$.

Here $rb \leq da^kb$ so that $r \leq da^k$. As illustrated in Figure \ref{induction3}(a)(ii)
\begin{equation*}
I_{g_v}\cup  (S_{e_4}\circ S_{e_1}^{k+1})(I_u) \cup I_{da^kg_u} \subset S(I_{g_u})
\end{equation*}
and so 
\begin{equation*}
g_v + da^{k+1} + da^kg_u \leq rg_u \leq da^kg_u.
\end{equation*}

(b)(i) $S(I_{g_u})\subset  (S_{e_4}\circ S_{e_1}^{k})(I_u)$, $(S \circ S_{e_2})(I_v) \subset  (S_{e_4}\circ S_{e_1}^{k}\circ S_{e_2})(I_v)$  \emph{and} $I_{da^kg_u} \subset S(I_{g_u})$.

As illustrated in  Figure \ref{induction3}(b)(i), in this case $da^kg_u \leq rg_u$ and $rb \leq da^kb$ so that $r=da^k$. This forces the following equalities
\begin{align*}
(S \circ S_{e_2})(I_v) &= (S_{e_4}\circ S_{e_1}^{k}\circ S_{e_2})(I_v) \\
S(I_{g_u}) &= I_{da^kg_u} \\
 (S \circ S_{e_1})(I_u) &=  (S_{e_4}\circ S_{e_1}^{k+1})(I_u)
\end{align*}
which means that $S = S_{e_4}\circ S_{e_1}^{k}$. It follows that $S(0)=(S_{e_4}\circ S_{e_1}^{k})(0)=S_{e_4}(0)$ and so $S(0) \notin S_{e_3}(I_v)$.

(b)(ii) $S(I_{g_u})\subset  (S_{e_4}\circ S_{e_1}^{k})(I_u)$ \emph{and} $S(I_{g_u})\subset (S_{e_4}\circ S_{e_1}^{k}\circ S_{e_2})(I_v)$.

As shown in Figure \ref{induction3}(b)(ii), $S(I_{g_u})\subset (S_{e_4}\circ S_{e_1}^{k}\circ S_{e_2})(I_v)$ implies
\begin{equation*}
S(I_{g_u}) \cup (S \circ S_{e_2})(I_v)  \subset (S_{e_4}\circ S_{e_1}^{k}\circ S_{e_2})(I_v)
\end{equation*}
and so $r(g_u + b)\leq da^kb$ with $r<da^k$. It is also the case that $I_{da^kg_u}\subset S(I_u)$, see Figure \ref{induction3}, and we must have
\begin{align*}
da^kg_u &\leq \max G(S(F_u)) \\
&= \max \left\{rg_u, rbg_v\right\} &&(\text{by Lemma \ref{P2I}(c)}) \\
&= rg_u  &&(\text{by } \eqref{P2PA}) \\
&< da^kg_u.
\end{align*}

(b)(iii) $S(I_{g_u})\subset  (S_{e_4}\circ S_{e_1}^{k})(I_u)$ \emph{and} $S(I_{g_u})\subset (S_{e_4}\circ S_{e_1}^{k+1})(I_u)$.

In this situation $I_{da^kg_u} \subset (S\circ S_{e_2})(I_v) \subset (S_{e_4}\circ S_{e_1}^{k})(I_u)$,  see Figure \ref{induction3}(b)(iii), and we can apply the proof of (a)(i).

We have considered all the possible cases for (a) and (b) and shown that none of them can happen. This leaves case (c) and so $S(I_{g_u})\subset  S_{e_3}(I_v)$ which implies
\begin{equation*}
I_{g_v}\subset (S\circ (S_{e_2} \circ S_{e_4})^0 \circ S_{e_2})(I_v)
\end{equation*}
and proves $Q(0)$.

\medskip

\emph{Induction hypothesis}.

For $i \in \mathbb{N}\cup\left\{0\right\}$ we assume $Q(i)$ is true so that
\begin{equation*}
I_{g_v}\subset (S\circ (S_{e_2} \circ S_{e_4})^i \circ S_{e_2})(I_v).
\end{equation*}

\medskip

\emph{Induction step}.

The proof mirrors that for $Q(0)$ but we use $I_{b^{i+1}d^{i+1}g_u}$ in place of $I_{g_u}$. The gap interval $I_{b^{i+1}d^{i+1}g_u}$ is shown in Figure \ref{induction2}. The reader may still refer to Figure \ref{induction3} in each of the following cases if the lengths $ra$, $rg_u$ and $rb$ shown there are replaced by $r(1-b^{i+1}d^{i+1}g_u-b^{i+2}d^{i+1})$, $rb^{i+1}d^{i+1}g_u$ and $rb^{i+2}d^{i+1}$ respectively. These interval lengths, before the map by $S$, are illustrated in Figure \ref{induction2} and are contained in $I_u$. In what follows we replace $S(I_{g_u})$ and $(S\circ S_{e_2})(I_v)$ in the induction base by $S(I_{b^{i+1}d^{i+1}g_u})$ and $(S\circ (S_{e_2}\circ S_{e_4})^{i+1} \circ S_{e_2})(I_v)$ respectively.

As before there are just three possible destinations for $I_{b^{i+1}d^{i+1}g_u}$ under $S$
\begin{align*}
&\textup{(a)} \quad I_{g_v}\subset S(I_{b^{i+1}d^{i+1}g_u}), \\
&\textup{(b)} \quad S(I_{b^{i+1}d^{i+1}g_u})\subset  (S_{e_4}\circ S_{e_1}^{k})(I_u), \\
&\textup{(c)} \quad S(I_{b^{i+1}d^{i+1}g_u})\subset  S_{e_3}(I_v).
\end{align*}
As in the induction base, in each of the following cases we arrive at a contradiction.

(a)(i) $I_{g_v}\subset S(I_{b^{i+1}d^{i+1}g_u})$ \emph{and} $I_{da^kg_u} \subset (S\circ (S_{e_2}\circ S_{e_4})^{i+1} \circ S_{e_2})(I_v) \subset (S_{e_4}\circ S_{e_1}^{k})(I_u)$.

Here $rb^{i+2}d^{i+1} \leq da^k$ and it must be the case that
\begin{align*}
da^kg_u &\leq \max G((S\circ (S_{e_2}\circ S_{e_4})^{i+1} \circ S_{e_2})(F_v)) \\
&= \max \left\{rb^{i+2}d^{i+1}g_v, rb^{i+2}d^{i+2}g_u\right\} &&(\text{by Lemma \ref{P2I}(d)}) \\
&= rb^{i+2}d^{i+1}g_v  &&(\text{as }  rb^{i+2}d^{i+2}g_u \leq d^{2}a^kg_u < da^kg_u)  \\
&\leq da^kg_v   \\
&< da^kg_u  &&(\text{by } \eqref{P2PA}) .
\end{align*}

(a)(ii) $I_{g_v}\subset S(I_{b^{i+1}d^{i+1}g_u})$ \emph{and} $(S\circ (S_{e_2}\circ S_{e_4})^{i+1} \circ S_{e_2})(I_v)  \subset (S_{e_4}\circ S_{e_1}^{k}\circ S_{e_2})(I_v)$.

In this case $rb^{i+2}d^{i+1} \leq da^kb$ so that $rb^{i+1}d^{i+1} \leq da^k$. Also
\begin{equation*}
I_{g_v}\cup  (S_{e_4}\circ S_{e_1}^{k+1})(I_u) \cup I_{da^kg_u} \subset S(I_{b^{i+1}d^{i+1}g_u})
\end{equation*}
and so 
\begin{equation*}
g_v + da^{k+1} + da^kg_u \leq rb^{i+1}d^{i+1}g_u \leq da^kg_u.
\end{equation*}

(b)(i) $S(I_{b^{i+1}d^{i+1}g_u})\subset  (S_{e_4}\circ S_{e_1}^{k})(I_u)$, $(S\circ (S_{e_2}\circ S_{e_4})^{i+1} \circ S_{e_2})(I_v) \subset  (S_{e_4}\circ S_{e_1}^{k}\circ S_{e_2})(I_v)$  \emph{and} $I_{da^kg_u} \subset S(I_{b^{i+1}d^{i+1}g_u})$.

Here $da^kg_u \leq rb^{i+1}d^{i+1}g_u$ and $rb^{i+2}d^{i+1} \leq da^kb$ which implies $rb^{i+1}d^{i+1}=da^k$. This forces the following equalities
\begin{align*}
(S\circ (S_{e_2}\circ S_{e_4})^{i+1} \circ S_{e_2})(I_v)  &= (S_{e_4}\circ S_{e_1}^{k}\circ S_{e_2})(I_v) \\
S(I_{b^{i+1}d^{i+1}g_u}) &= I_{da^kg_u} \\
(S\circ (S_{e_2}\circ S_{e_4})^{i+1} \circ S_{e_1})(I_u)  &=  (S_{e_4}\circ S_{e_1}^{k+1})(I_u)
\end{align*}
which means that $S\circ (S_{e_2}\circ S_{e_4})^{i+1}= S_{e_4}\circ S_{e_1}^{k}$. As shown in Figure \ref{induction2} the gap interval immediately to the left of $(S_{e_2}\circ S_{e_4})^{i+1}(I_u)$ is $I_{b^{i+1}d^ig_v}$ and as shown in Figure \ref{induction3} the gap interval immediately to the left of $(S_{e_4}\circ S_{e_1}^{k})(I_u)$ is $I_{g_v}$. Therefore
\begin{equation*}
I_{g_v} \subset S(I_{b^{i+1}d^ig_v}).
\end{equation*}

(b)(ii) $S(I_{b^{i+1}d^{i+1}g_u})\subset  (S_{e_4}\circ S_{e_1}^{k})(I_u)$ \emph{and} $S(I_{b^{i+1}d^{i+1}g_u})\subset (S_{e_4}\circ S_{e_1}^{k}\circ S_{e_2})(I_v)$.

Here $S(I_{b^{i+1}d^{i+1}g_u})\subset (S_{e_4}\circ S_{e_1}^{k}\circ S_{e_2})(I_v)$ implies
\begin{equation*}
S(I_{b^{i+1}d^{i+1}g_u}) \cup (S\circ (S_{e_2}\circ S_{e_4})^{i+1} \circ S_{e_2})(I_v)  \subset (S_{e_4}\circ S_{e_1}^{k}\circ S_{e_2})(I_v)
\end{equation*}
and so $rb^{i+1}d^{i+1}(g_u + b)\leq da^kb$ with $rb^{i+1}d^{i+1}<da^k$. By the induction hypothesis $ (S\circ (S_{e_2}\circ S_{e_4})^{i} \circ S_{e_2})(0) \in S_{e_3}(I_v)$ and $ (S\circ (S_{e_2}\circ S_{e_4})^{i} \circ S_{e_2})(1) = S(1) \in (S_{e_4}\circ S_{e_1}^{k}\circ S_{e_2})(I_v)$ by \eqref{S12}. This ensures $I_{da^kg_u} \subset  (S\circ (S_{e_2}\circ S_{e_4})^{i} \circ S_{e_2})(I_v)$ and so it must be the case that
\begin{align*}
da^kg_u &\leq \max G( (S\circ (S_{e_2}\circ S_{e_4})^{i} \circ S_{e_2})(F_v)) \\
&= \max \left\{rb^{i+1}d^{i}g_v, rb^{i+1}d^{i+1}g_u\right\} &&(\text{by Lemma \ref{P2I}(d)}) \\
&= rb^{i+1}d^{i}g_v  &&(\text{as } rb^{i+1}d^{i+1}g_u < da^kg_u) \\
&< g_v.
\end{align*}
Also by the induction hypothesis $I_{g_v}\subset (S\circ (S_{e_2} \circ S_{e_4})^i \circ S_{e_2})(I_v)$ which implies
\begin{align*}
g_v &\leq \max G( (S\circ (S_{e_2}\circ S_{e_4})^{i} \circ S_{e_2})(F_v)) \\
&= \max \left\{rb^{i+1}d^{i}g_v, rb^{i+1}d^{i+1}g_u\right\} &&(\text{by Lemma \ref{P2I}(d)}) \\
&= rb^{i+1}d^{i+1}g_u  &&(\text{as } rb^{i+1}d^{i}g_v < g_v) \\
&< da^kg_u &&(\text{as } rb^{i+1}d^{i+1}<da^k).
\end{align*}

(b)(iii)$S(I_{b^{i+1}d^{i+1}g_u})\subset  (S_{e_4}\circ S_{e_1}^{k})(I_u)$ \emph{and} $S(I_{b^{i+1}d^{i+1}g_u})\subset (S_{e_4}\circ S_{e_1}^{k+1})(I_u)$.

Here $S(I_{b^{i+1}d^{i+1}g_u})\subset (S_{e_4}\circ S_{e_1}^{k+1})(I_u)$ implies $I_{da^kg_u} \subset  (S\circ (S_{e_2}\circ S_{e_4})^{i+1} \circ S_{e_2})(I_v)  \subset (S_{e_4}\circ S_{e_1}^{k})(I_u)$ and so the proof of (a)(i) applies.

We have now shown that none of the possible cases for (a) and (b) can actually happen which leaves case (c) and so $S(I_{b^{i+1}d^{i+1}g_u})\subset  S_{e_3}(I_v)$. It must be the case, see Figure \ref{induction2}, that
\begin{equation*}
I_{g_v}\subset (S\circ (S_{e_2} \circ S_{e_4})^{i+1} \circ S_{e_2})(I_v).
\end{equation*}
This completes the induction step since we have shown $Q(i)$ implies $Q(i+1)$ for all $i\in \mathbb{N}\cup \left\{0\right\}$.

\medskip

Therefore $Q(i)$ is true for all $i\in \mathbb{N}\cup \left\{0\right\}$ and $g_v=0$ which is our final contradiction.
\end{proof}
The symmetry of the $2$-vertex IFS of Figure \ref{P22VertexUnitInterval} means we have also proved the next two lemmas.
\begin{lem}
\label{P2Ob}
For the $2$-vertex IFS of Figure \ref{P22VertexUnitInterval}, let $S: \mathbb{R} \rightarrow \mathbb{R}$ be a (non-reflecting) contracting similarity with contracting similarity ratio $r$, $0<r<1$, such that $S(F_v) \subset F_v$. 

Then either $S(I_v)\subset S_{e_3}(I_v)$ or $S(I_v)\subset S_{e_4}(I_u)$.
\end{lem}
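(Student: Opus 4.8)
The plan is to obtain Lemma \ref{P2Ob} from Lemma \ref{P2O} by exploiting the structural symmetry of the $2$-vertex IFS of Figure \ref{P22VertexUnitInterval} under the interchange of the two vertices $u \leftrightarrow v$. Observe that if we swap the roles of $u$ and $v$ — equivalently, swap the symbol pairs $(a,b,g_u) \leftrightarrow (c,d,g_v)$ and relabel the edges $e_1\leftrightarrow e_3$, $e_2\leftrightarrow e_4$ — then the invariance equations \eqref{Invariance equation1} and \eqref{Invariance equation2} are carried into each other, as are the level-$1$ interval descriptions, the similarity formulas, and the gap-length expressions \eqref{G_u}, \eqref{G_v}. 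Thus the pair $(F_v,F_u)$ is the attractor of a $2$-vertex IFS of exactly the same type as in Figure \ref{P22VertexUnitInterval}, with $F_v$ playing the role of ``$F_u$'' and $F_u$ playing the role of ``$F_v$''.

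First I would make this relabelling precise: define the IFS $\bigl(V,E^*,i,t,r,((\mathbb{R},|\ |))_{v\in V},(S_e)_{e\in E^1}\bigr)$ obtained from the original by the vertex transposition described above, and check that it again satisfies the CSSC and is defined on the unit interval, with attractor $(F_v,F_u)$ — the only content here is to match up the parameters, which is immediate from the formulas in Subsection 2.2. Then I would apply Lemma \ref{P2O} verbatim to this relabelled system: a non-reflecting contracting similarity $S$ with $S(F_v)\subset F_v$ is, in the relabelled system, a non-reflecting contracting similarity mapping the ``$u$-component'' into itself, so Lemma \ref{P2O} gives $S(I_v)$ contained in one of the two level-$1$ intervals of the ``$u$-component'', namely $S_{e_3}(I_v)$ (the image under the relabelled ``$S_{e_1}$'') or $S_{e_4}(I_u)$ (the image under the relabelled ``$S_{e_2}$''). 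This is exactly the conclusion of Lemma \ref{P2Ob}.

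There is no substantive obstacle here; the statement is essentially a corollary and the paper itself flags it as such (``The symmetry of the $2$-vertex IFS of Figure \ref{P22VertexUnitInterval} means we have also proved the next two lemmas''). The only thing requiring care — and the step I would be most careful to state explicitly — is verifying that the vertex-transposed system genuinely falls under the hypotheses of Figure \ref{P22VertexUnitInterval} and of Lemma \ref{P2O}: that is, that the relabelled similarities still do not reflect (they do not, since transposition does not introduce any orientation reversal), and that all the structural facts used in the proof of Lemma \ref{P2O} (Lemma \ref{P2I}, the gap-interval notation, Equation \eqref{S1}) are symmetric under $u\leftrightarrow v$, which they are by inspection of \eqref{G_u} and \eqref{G_v}. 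Once that is noted, the proof is a one-line invocation of Lemma \ref{P2O} applied to the transposed system.
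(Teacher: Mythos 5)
Your proposal is correct and is precisely the argument the paper intends: the paper offers no separate proof of Lemma \ref{P2Ob}, asserting only that it follows "by the symmetry of the $2$-vertex IFS," and your relabelling $u\leftrightarrow v$, $(a,b,g_u)\leftrightarrow(c,d,g_v)$, $e_1\leftrightarrow e_3$, $e_2\leftrightarrow e_4$ followed by a verbatim application of Lemma \ref{P2O} is exactly that symmetry made explicit. No gap.
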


\begin{lem}
\label{P2Pb}
For the $2$-vertex IFS of Figure \ref{P22VertexUnitInterval}, let $S: \mathbb{R} \rightarrow \mathbb{R}$ be a (non-reflecting) contracting similarity with contracting similarity ratio $r$, $0<r<1$, such that $S(F_v) \subset F_u$. 

Then either $S(I_v)\subset S_{e_1}(I_u)$ or $S(I_v)\subset S_{e_2}(I_v)$.
\end{lem}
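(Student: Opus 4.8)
The plan is to obtain Lemmas \ref{P2Ob} and \ref{P2Pb} from Lemmas \ref{P2O} and \ref{P2P} by making the cited symmetry precise, namely the fact that the family of $2$-vertex IFSs of Figure \ref{P22VertexUnitInterval} is invariant under interchanging the two vertices. Given such an IFS $\mathcal{S}$ with vertices $u,v$ and edges $e_1,\dots,e_4$ (so that $e_1$ is a loop at $u$, $e_2$ runs from $u$ to $v$, $e_3$ is a loop at $v$, and $e_4$ runs from $v$ to $u$) and parameters $\{g_u,g_v,a,b,c,d\}\subset\mathbb{R}^+$ with $a+g_u+b=c+g_v+d=1$, I would introduce the relabelled system $\mathcal{S}'$ defined by $u'=v$, $v'=u$, $e_1'=e_3$, $e_2'=e_4$, $e_3'=e_1$, $e_4'=e_2$, with parameters $a'=c$, $b'=d$, $c'=a$, $d'=b$, $g_{u'}=g_v$, $g_{v'}=g_u$. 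The first step is to check that $\mathcal{S}'$ is again a member of the family of Figure \ref{P22VertexUnitInterval}: its defining similarities are the same four maps $S_{e_1},\dots,S_{e_4}$ as those of $\mathcal{S}$ (merely reindexed), hence still do not reflect; its directed graph is that of $\mathcal{S}$ with the two vertices renamed; the primed parameters lie in $\mathbb{R}^+$ and satisfy $a'+g_{u'}+b'=c'+g_{v'}+d'=1$ because these are just the original two equations taken in the opposite order; and the CSSC is plainly insensitive to the names of the vertices.

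The second step is to identify the attractor of $\mathcal{S}'$ as the vertex-swap of that of $\mathcal{S}$. Rewriting the invariance equations \eqref{Invariance equation1} and \eqref{Invariance equation2} of $\mathcal{S}$ in the primed notation, and setting $F_{u'}=F_v$, $F_{v'}=F_u$, gives $F_{v'}=S_{e_3'}(F_{v'})\cup S_{e_4'}(F_{u'})$ and $F_{u'}=S_{e_1'}(F_{u'})\cup S_{e_2'}(F_{v'})$; these are precisely the invariance equations \eqref{Invariance equation3} for $\mathcal{S}'$, so by the uniqueness part of the Contraction Mapping Theorem the attractor of $\mathcal{S}'$ is $(F_{u'},F_{v'})=(F_v,F_u)$.

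The third step is routine translation. Since Lemma \ref{P2P} holds for every IFS of the family of Figure \ref{P22VertexUnitInterval}, it holds for $\mathcal{S}'$: any non-reflecting contracting similarity $S$ with $S(F_{u'})\subset F_{v'}$ satisfies $S(I_{u'})\subset S_{e_3'}(I_{v'})$ or $S(I_{u'})\subset S_{e_4'}(I_{u'})$. Substituting $F_{u'}=F_v$, $F_{v'}=F_u$, $I_{u'}=I_v$, $I_{v'}=I_u$, $S_{e_3'}=S_{e_1}$ and $S_{e_4'}=S_{e_2}$ turns this into the statement that $S(F_v)\subset F_u$ implies $S(I_v)\subset S_{e_1}(I_u)$ or $S(I_v)\subset S_{e_2}(I_v)$, which is exactly Lemma \ref{P2Pb}. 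Applying Lemma \ref{P2O} to $\mathcal{S}'$ in the same way (with $S(F_{u'})\subset F_{u'}$) yields Lemma \ref{P2Ob}. There is no genuinely hard step here; the only point requiring care is the bookkeeping of the first two steps, i.e. verifying that the relabelled system really does lie in the family for which Lemmas \ref{P2O} and \ref{P2P} have been established and that its attractor is the vertex-swap of the original — once that is in place, the conclusions transfer verbatim.
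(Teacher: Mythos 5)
Your proposal is correct and is essentially the paper's own argument: the paper dispatches Lemmas \ref{P2Ob} and \ref{P2Pb} with the single remark that they follow ``by the symmetry of the $2$-vertex IFS of Figure \ref{P22VertexUnitInterval},'' which is precisely the vertex-swap relabelling you make explicit. Your careful bookkeeping (checking the relabelled system lies in the same family and that its attractor is the swapped pair, via uniqueness in the Contraction Mapping Theorem) is a fuller write-up of the same idea, with the substitutions matching the stated conclusions exactly.
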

The hard work in the proofs of the preceding lemmas pays off in the proof of Lemma \ref{P2subset1} which we use in the proof of Theorem \ref{P2M} in Subsection \ref{three1}. For an example with $F_u\subset F_v$ and $F_u\neq F_v$ where $F_u$ is a standard IFS attractor see Subsection \ref{five3}.
\begin{lem}
\label{P2subset1}
For the attractor $(F_u,F_v)$ of the $2$-vertex IFS of Figure \ref{P22VertexUnitInterval}, if $F_u\subset F_v$ then $F_u=F_v$.
\end{lem}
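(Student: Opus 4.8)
The plan is to show that the hypothesis $F_u\subset F_v$ actually forces the reverse inclusion $F_v\subset F_u$, so that $F_u=F_v$ as required. The only ingredients needed are the invariance equations \eqref{Invariance equation1} and \eqref{Invariance equation2}, the relations $a+g_u+b=c+g_v+d=1$, and the ``no spanning'' Lemmas \ref{P2P}, \ref{P2Ob} and \ref{P2Pb} applied to the canonical similarities that the invariance equations hand us.

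First I would extract two linear inequalities. By \eqref{Invariance equation1}, $S_{e_1}(F_u)=aF_u\subset F_u\subset F_v$, and $S_{e_1}$ is a non-reflecting contracting similarity, so Lemma \ref{P2P} gives $S_{e_1}(I_u)=[0,a]\subset S_{e_3}(I_v)=[0,c]$ or $[0,a]\subset S_{e_4}(I_u)=[c+g_v,1]$; since $0\in[0,a]$ the second possibility is absurd, hence $a\leq c$. Similarly \eqref{Invariance equation1} gives $S_{e_2}(F_v)=bF_v+a+g_u\subset F_u\subset F_v$, so Lemma \ref{P2Ob} applied to the non-reflecting similarity $S_{e_2}$ gives $S_{e_2}(I_v)=[a+g_u,1]\subset[0,c]$ or $\subset[c+g_v,1]$, and since $1\notin[0,c]$ we obtain $a+g_u\geq c+g_v$. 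Because $a+g_u+b=c+g_v+d=1$, this last inequality at once yields $b\leq d$.

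Next I would use these inequalities to see how $F_u$ and $F_v$ sit over $[c+g_v,1]$. From \eqref{Invariance equation2} and $g_v>0$ one reads off $F_v\cap[c+g_v,1]=S_{e_4}(F_u)=dF_u+(c+g_v)$; from \eqref{Invariance equation1}, using $a\leq c$ (so $[0,a]$ misses $[c+g_v,1]$) and $a+g_u\geq c+g_v$ (so $S_{e_2}(I_v)\subset[c+g_v,1]$), one reads off $F_u\cap[c+g_v,1]=S_{e_2}(F_v)=bF_v+(a+g_u)$. Hence $F_u\subset F_v$ forces $bF_v+(a+g_u)\subset dF_u+(c+g_v)$, that is, $T(F_v)\subset F_u$ where $T(x)=(b/d)x+\alpha/d$ and $\alpha=a+g_u-c-g_v\geq0$; note that $b+\alpha=d$, so $T(1)=1$. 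Now I would rule out $b<d$: in that case $T$ is a non-reflecting contracting similarity with $T(F_v)\subset F_u$, so Lemma \ref{P2Pb} gives $T(I_v)\subset S_{e_1}(I_u)=[0,a]$ or $T(I_v)\subset S_{e_2}(I_v)=[a+g_u,1]$; as $T(1)=1\notin[0,a]$ the first fails, so $T(I_v)=[\alpha/d,1]\subset[a+g_u,1]$, giving $\alpha/d\geq a+g_u$. Substituting $\alpha=a+g_u-(c+g_v)$ and $1-d=c+g_v$ this rearranges to $(a+g_u)(c+g_v)\geq c+g_v$, hence $a+g_u\geq1$, contradicting $a+g_u=1-b<1$. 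Therefore $b=d$, whence $\alpha=0$ and $T(F_v)=F_v\subset F_u$; together with the hypothesis this gives $F_u=F_v$.

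The step I expect to be the main obstacle is establishing the exact set identities $F_u\cap[c+g_v,1]=S_{e_2}(F_v)$ and $F_v\cap[c+g_v,1]=S_{e_4}(F_u)$: without the inequalities $a\leq c$ and $a+g_u\geq c+g_v$ a level-$1$ piece of $F_u$ could straddle the point $c+g_v$ and these identities would fail, so it is precisely here that the no-spanning lemmas earn their keep. One must also check at each use that the similarity involved ($S_{e_1}$, $S_{e_2}$, $T$) is orientation preserving, so that Lemmas \ref{P2P}, \ref{P2Ob} and \ref{P2Pb} genuinely apply.
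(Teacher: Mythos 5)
Your proof is correct and follows essentially the same route as the paper: both reduce the problem to showing $S_{e_2}(F_v)\subset S_{e_4}(F_u)$ (via Lemma \ref{P2Ob} applied to $S_{e_2}$) and then apply Lemma \ref{P2Pb} to the map $S_{e_4}^{-1}\circ S_{e_2}$ to force $S_{e_2}=S_{e_4}$, hence $F_v\subset F_u$. The only cosmetic differences are your extra preliminary use of Lemma \ref{P2P} on $S_{e_1}$ and the fact that you extract an arithmetic contradiction from the positive conclusion of Lemma \ref{P2Pb}, where the paper instead observes directly that a non-identity $S_{e_4}^{-1}\circ S_{e_2}$ would span the level-$1$ gap of $F_u$.
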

\begin{proof}
From Equation \eqref{Invariance equation1}, $S_{e_2}(F_v)\subset F_v$ and by Lemma \ref{P2Ob}, $S_{e_2}(F_v)\subset S_{e_2}(I_v)\subset S_{e_4}(I_u)$, which means $S_{e_2}(F_v)\subset F_v \cap S_{e_4}(I_u) = S_{e_4}(F_u)$ by Equation \eqref{Invariance equation2} and the CSSC. Consider the position of $S_{e_2}(0)$. Clearly $S_{e_4}(0) \leq S_{e_2}(0) < (S_{e_4}\circ S_{e_2})(0)$. If $S_{e_4}(0) < S_{e_2}(0) < (S_{e_4}\circ S_{e_2})(0)$ then $S_{e_4}^{-1}\circ S_{e_2}$ is a contracting similarity such that $(S_{e_4}^{-1}\circ S_{e_2})(F_v)\subset F_u$ where $(S_{e_4}^{-1}\circ S_{e_2})(I_v)$ spans the gap between the level-$1$ intervals of $F_u$, but this is impossible by Lemma \ref{P2Pb}. This implies $S_{e_2}(0) = S_{e_4}(0)$, so that $S_{e_2}=S_{e_4}$ and $F_v\subset F_u$. Therefore $F_u=F_v$.
\end{proof}
Again a symmetrical argument means that we also have the next lemma.
\begin{lem}
\label{P2subset2}
For the attractor $(F_u,F_v)$ of the $2$-vertex IFS of Figure \ref{P22VertexUnitInterval} if $F_v\subset F_u$ then $F_v=F_u$.
\end{lem}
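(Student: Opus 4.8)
The plan is to obtain this by repeating the argument of Lemma~\ref{P2subset1} with the two vertices interchanged. The $2$-vertex IFS of Figure~\ref{P22VertexUnitInterval} is invariant under the relabelling $u\leftrightarrow v$, $e_1\leftrightarrow e_3$, $e_2\leftrightarrow e_4$ (so $a\leftrightarrow c$, $b\leftrightarrow d$, $g_u\leftrightarrow g_v$), and this relabelling swaps Equation~\eqref{Invariance equation1} with Equation~\eqref{Invariance equation2}, Lemma~\ref{P2Ob} with Lemma~\ref{P2O}, and Lemma~\ref{P2Pb} with Lemma~\ref{P2P}, while carrying the hypothesis $F_u\subset F_v$ to $F_v\subset F_u$ and the conclusion $F_u=F_v$ to $F_v=F_u$. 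One could therefore simply invoke this symmetry; for definiteness I would spell it out as follows.

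From Equation~\eqref{Invariance equation2}, $S_{e_4}(F_u)\subset F_v$, so the hypothesis $F_v\subset F_u$ gives $S_{e_4}(F_u)\subset F_u$. Since $S_{e_4}(I_u)=[c+g_v,1]$ contains $1=\max I_u$ while $S_{e_1}(I_u)=[0,a]$ does not, Lemma~\ref{P2O} forces $S_{e_4}(I_u)\subset S_{e_2}(I_v)$. Hence $S_{e_4}(F_u)\subset F_u\cap S_{e_2}(I_v)$, and since $S_{e_2}(I_v)=S_{e_2}(C(F_v))$ is disjoint from $S_{e_1}(C(F_u))$ by the CSSC, Equation~\eqref{Invariance equation1} gives $F_u\cap S_{e_2}(I_v)=S_{e_2}(F_v)$; thus $S_{e_4}(F_u)\subset S_{e_2}(F_v)$.

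Next I would locate $S_{e_4}(0)$. Comparing minima in this inclusion gives $S_{e_2}(0)=\min S_{e_2}(F_v)\leq\min S_{e_4}(F_u)=S_{e_4}(0)$; and $S_{e_4}(0)<(S_{e_2}\circ S_{e_4})(0)$, since otherwise $S_{e_4}(F_u)$, whose smallest element would then lie at or beyond the start of the rightmost level-$1$ piece $S_{e_2}(S_{e_4}(F_u))$ of $S_{e_2}(F_v)$, must lie inside that piece, whose convex hull has length $bd<d=|C(S_{e_4}(F_u))|$, an absurdity. So $S_{e_2}(0)\leq S_{e_4}(0)<(S_{e_2}\circ S_{e_4})(0)$. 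If the first inequality were strict then $S_{e_2}^{-1}\circ S_{e_4}$ would be a genuine contracting similarity (ratio $d/b<1$, since $S_{e_4}(F_u)\subset S_{e_2}(F_v)$ forces $c+g_v\geq a+g_u$, hence $d\leq b$ with equality only if $S_{e_2}=S_{e_4}$), with $(S_{e_2}^{-1}\circ S_{e_4})(F_u)\subset F_v$, and $(S_{e_2}^{-1}\circ S_{e_4})(I_u)$ would have left endpoint $(S_{e_2}^{-1}\circ S_{e_4})(0)\in F_v$ lying at or below $c$ and right endpoint $(S_{e_2}^{-1}\circ S_{e_4})(1)=1$, so it would span the gap $I_{g_v}$ between the level-$1$ intervals of $F_v$, contradicting Lemma~\ref{P2P}. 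Therefore $S_{e_2}(0)=S_{e_4}(0)$, so $S_{e_2}=S_{e_4}$, and then $S_{e_4}(F_u)\subset S_{e_2}(F_v)$ reads $S_{e_2}(F_u)\subset S_{e_2}(F_v)$, i.e.\ $F_u\subset F_v$; together with $F_v\subset F_u$ this yields $F_u=F_v$.

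Exactly as in Lemma~\ref{P2subset1}, the only slightly delicate point is to confirm that the map produced by applying $S_{e_2}^{-1}$ is a strict contraction whose image interval genuinely straddles the middle gap of $F_v$, so that Lemma~\ref{P2P} is applicable; this is handled by the endpoint chain $S_{e_2}(0)<S_{e_4}(0)<(S_{e_2}\circ S_{e_4})(0)$ and the inequality $d\leq b$ established above. All remaining steps use only the invariance equations, the CSSC, and Lemmas~\ref{P2O} and \ref{P2P}, so there is no substantive obstacle once those lemmas are in hand.
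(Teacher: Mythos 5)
Your proposal is correct and takes the same route as the paper, which simply observes that Lemma \ref{P2subset2} follows from the proof of Lemma \ref{P2subset1} by the symmetry $u\leftrightarrow v$, $e_1\leftrightarrow e_3$, $e_2\leftrightarrow e_4$ (using Lemmas \ref{P2O} and \ref{P2P} in place of Lemmas \ref{P2Ob} and \ref{P2Pb}). Your explicit write-out of the symmetric argument, including the justification of the endpoint chain $S_{e_2}(0)\leq S_{e_4}(0)<(S_{e_2}\circ S_{e_4})(0)$ and of the strict contraction ratio $d/b<1$, is accurate and merely fills in details the paper leaves implicit.
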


Although we will not make use of Lemma \ref{P2levelk} it's worth stating as it is a consequence of Lemmas \ref{P2O}-\ref{P2subset2}. It can also be used to provide another proof of Theorem \ref{P2M}.
\begin{lem}
\label{P2levelk}
For the $2$-vertex IFS of Figure \ref{P22VertexUnitInterval}, let $S: \mathbb{R} \rightarrow \mathbb{R}$ be a (non-reflecting) contracting similarity with contracting similarity ratio $r$, $0<r<1$, which maps a component of  the attractor $(F_u,F_v)$ into a component of $(F_u,F_v)$. Suppose also that $F_u\neq F_v$. 

Then $S(F_u)$ \textup{(}or $S(F_v)$\textup{)} is an elementary piece. More precisely
\begin{align*}
&\textup{(a)}  \quad \textrm{if } S(F_u)\subset F_u \textrm{ then } S(F_u) = S_{\be}(F_{t(\be)}) = S_{\be}(F_u) \textrm{ for some } {\be} \in E_{uu}^* , \\
&\textup{(b)}  \quad \textrm{if } S(F_u)\subset F_v \textrm{ then } S(F_u) = S_{\be}(F_{t(\be)}) = S_{\be}(F_u) \textrm{ for some } {\be} \in E_{vu}^* , \\
&\textup{(c)}  \quad \textrm{if } S(F_v)\subset F_v \textrm{ then } S(F_v) = S_{\be}(F_{t(\be)}) = S_{\be}(F_v) \textrm{ for some } {\be} \in E_{vv}^* , \\
&\textup{(d)}  \quad \textrm{if } S(F_v)\subset F_u \textrm{ then } S(F_v) = S_{\be}(F_{t(\be)}) = S_{\be}(F_v) \textrm{ for some } {\be} \in E_{uv}^* . 
\end{align*} 
\end{lem}
\begin{proof}
(a) For a contradiction suppose $S(I_u) \neq S_{\be}(I_{t(\be)})$ for any ${\be} \in E_u^*$. If $S(I_u)$ is strictly contained in a level-$k$ interval for each $k \in \mathbb{N}$ then $r=0$, so there exists $k \in \mathbb{N}$ and $\Bf \in E_u^k$ such that $S(I_u) \subsetneqq S_{\Bf}(I_{t(\Bf)})$ with $S(I_u)$ spanning the gap between the two level-$(k+1)$ subintervals of $S_{\Bf}(I_{t(\Bf)})$. It follows that $(S_{\Bf}^{-1}\circ S)(I_u)$ is a (non-reflecting) contracting similarity which spans the gap between level-$1$ intervals. This is impossible by Lemmas \ref{P2O} and \ref{P2P} and proves $S(I_u) = S_{\be}(I_{t(\be)})$ for some ${\be} \in E_u^*$. 

If ${\be} \in E_{uv}^*$ then $S(I_u) = S_{\be}(I_v)$, $F_u \subset F_v$ and $F_u = F_v$ by Lemma \ref{P2subset1}. Therefore ${\be} \in E_{uu}^*$. 

The proofs of (b), (c) and (d) are similar.
\end{proof}
If we add any number of edges (with non-reflecting similarities) to the directed graph of Figure \ref{P22VertexUnitInterval}, maintaining the CSSC, keeping all level-$1$ gap lengths equal across both vertices and ensuring that $F_u \not\subset F_v$ and $F_v \not\subset F_u$, then the conclusions of Lemma \ref{P2levelk} will still hold. This result can naturally be extended to allow for reflecting similarities and to $n$-vertex (CSSC) IFSs.

\subsection{Proof of Theorem \ref{P2M}} \label{three1}

\begin{proof}
For a contradiction we assume $F_u$ is the attractor of a standard IFS, that is we assume $F_u$ satisfies an invariance equation of the form
\begin{equation*}
\label{onevertexequation}
F_u=\bigcup_{i=1}^n S_i(F_u)
\end{equation*}
for some $n\geq2$ where each $S_i$ is a (non-reflecting) contracting similarity. By Lemma \ref{P2O}, for each $i$, either $S_i(F_u)\subset S_i(I_u)\subset S_{e_1}(I_u)$ or $S_i(F_u)\subset S_i(I_u)\subset S_{e_2}(I_v)$ so the similarities  split into two groups. After relabelling, we may now write $F_u$ as
\begin{equation*}
F_u=\bigcup_{i=1}^{m_1} T_i(F_u)  \ \cup  \ \bigcup_{i=1}^{n_1} S_i^\prime(F_u)
\end{equation*}
with $m_1+n_1=n$, $1\leq m_1,n_1 < n$, $\bigcup_{i=1}^{m_1} T_i(F_u) \subset S_{e_1}(I_u)$ and $\bigcup_{i=1}^{n_1} S_i^\prime(F_u) \subset S_{e_2}(I_v)$. In fact by Equation \eqref{Invariance equation1} and the CSSC it follows that  $S_{e_1}(F_u)=\bigcup_{i=1}^{m_1} T_i(F_u)$ and $S_{e_2}(F_v)=\bigcup_{i=1}^{n_1} S_i^\prime(F_u)$.

We now concentrate on the second of these equations looking to exploit the fact that $F_u \neq F_v$. If $n_1=1$ then $S_1^\prime(F_u)=S_{e_2}(F_v)$ for some single similarity $S_1^\prime$ which implies $S_1^\prime(0)=S_{e_2}(0)$, $S_1^\prime(1)=S_{e_2}(1)$,  so that $S_1^\prime=S_{e_2}$ and $F_u=F_v$. This contradiction means that $1<n_1< n$. If for any $i$, $\left|S_i^\prime(F_u)\right|=\left|S_{e_2}(F_v)\right|$ then as  $S_i^\prime(F_u) \subset S_{e_2}(F_v)$ it follows that $S_i^\prime(0)=S_{e_2}(0)$, $S_i^\prime(1)=S_{e_2}(1)$, so that $S_i^\prime=S_{e_2}$ and $F_u \subset F_v$. This means $F_u = F_v$ by Lemma \ref{P2subset1} and again contradicts the assumption that $F_u \neq F_v$. Therefore $\left|S_i^\prime(F_u)\right|<\left|S_{e_2}(F_v)\right|$ and the maps $S_{e_2}^{-1}\circ S_i^\prime$ are contracting similarities. Relabelling we can now write $F_v= \bigcup_{i=1}^{n_1} (S_{e_2}^{-1}\circ S_i^\prime)(F_u)$ as
\begin{equation*}
F_v= \bigcup_{i=1}^{n_1} S_{1,i}(F_u).
\end{equation*}
By Lemma \ref{P2P} the contracting similarities $S_{1,i}$ must also split into two groups. Again after relabelling we obtain 
\begin{equation*}
F_v=\bigcup_{i=1}^{n_2}S_{1,i}^\prime(F_u)  \ \cup  \ \bigcup_{i=1}^{m_2} T_{1,i}(F_u).
\end{equation*}
where $n_2+m_2=n_1$, $1\leq m_2,n_2 < n_1$, $\bigcup_{i=1}^{n_2}S_{1,i}^\prime(F_u) \subset S_{e_3}(I_v)$ and $\bigcup_{i=1}^{m_2} T_{1,i}(F_u)$  $\subset S_{e_4}(I_u)$. Equation \eqref{Invariance equation2} and the CSSC imply $S_{e_3}(F_v)=\bigcup_{i=1}^{n_2}S_{1,i}^\prime(F_u)$. Exactly as argued above, using Lemma \ref{P2subset1}, it follows that $1< n_2 < n_1 < n$ and that each $S_{e_3}^{-1}\circ S_{1,i}^\prime$ is a contracting similarity. We now have $F_v=\bigcup_{i=1}^{n_2}(S_{e_3}^{-1}\circ S_{1,i}^\prime)(F_u)$  which we relabel as
\begin{equation*}
F_v= \bigcup_{i=1}^{n_2}S_{2,i}(F_u).
\end{equation*}
So far we have $1< n_2 < n_1 < n$ and it is clear that we can repeat this process indefinitely, but from now on using only Lemmas \ref{P2P} and \ref{P2subset1}, Equation \eqref{Invariance equation2}, and the expanding similarity $S_{e_3}^{-1}$. Each time we obtain an expression for $F_v$ 
\begin{equation*}
F_v= \bigcup_{i=1}^{n_{k+1}}S_{k+1,i}(F_u)
\end{equation*}
where the number of contracting similarities $n_{k+1}$ has been reduced with $1 < n_{k+1}<n_k$. This constructs an infinite sequence $(n_k)$, with $n_k \in \mathbb{N}$ and $1 < n_{k+1} < n_k$ for each $k \in \mathbb{N}$, which is impossible. Therefore $F_u$ is not the attractor of a standard IFS. 

An appeal to symmetry is enough to ensure that $F_v$ cannot be the attractor of any standard IFS either and this completes the proof of Theorem \ref{P2M}. 
\end{proof}

\section{An example}\label{four}

For the class of $2$-vertex IFSs of Figure \ref{P22VertexUnitInterval} the Hausdorff dimension $s=\dimH F_u=\dimH F_v$ is the solution of 
\begin{gather}
\label{HDimension}
(r_{e_1}^t-1)(r_{e_3}^t-1)-r_{e_2}^tr_{e_4}^t = (a^t-1)(c^t-1) - b^td^t = 0,
\end{gather}
see Theorem \ref{dimension}. Clearly neither $0$ nor $1$ is a solution so $0<s<1$. 

We now present just one specific example of the class of $2$-vertex IFSs of Figure \ref{P22VertexUnitInterval} in which the golden ratio makes an appearance. Consider the following parameters 
\begin{equation*}
a=\frac{1}{4}, \ g_u=\frac{1}{4}, \ b=\frac{1}{2}, \ c=\frac{1}{2}, \ g_v=\frac{1}{4}, \ d=\frac{1}{4}.
\end{equation*}
For these parameters the level-$k$ intervals, for $0 \leq k \leq 5$, are illustrated in Figure \ref{Paper2Example}. As given in Equation \eqref{HDimension} the Hausdorff dimension is the solution of
\begin{equation*}
\bigl(\left(\tfrac{1}{4}\right)^t-1\bigr)\bigl(\left(\tfrac{1}{2}\right)^t-1\bigr) - \left(\tfrac{1}{2}\right)^t\left(\tfrac{1}{4}\right)^t = 0.
\end{equation*}

\begin{figure}[htb]
\begin{center}
\includegraphics[trim = 5mm 192mm 5mm 16mm, clip, scale=0.7]{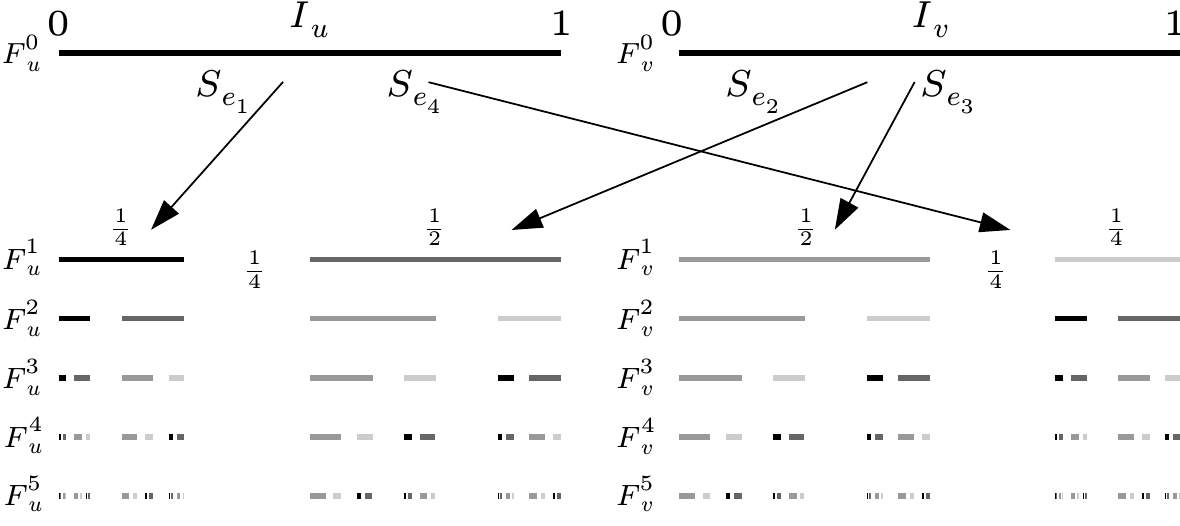}
\end{center}
\caption{Level-$k$ intervals of $F_u$ and $F_v$, for $0 \leq k \leq 5$. Neither $F_u$ nor $F_v$ is the attractor of a standard IFS. }
\label{Paper2Example}
\end{figure} 

\noindent This reduces to a quadratic
\begin{equation}
\label{quadratic}
\bigl(\left(\tfrac{1}{2}\right)^t\bigr)^2 + \left(\tfrac{1}{2}\right)^t - 1 = 0,
\end{equation}
and so the Hausdorff dimension is
\begin{equation*}
s = \frac{\ln \bigl(\frac{-1+\sqrt 5}{2}\bigr)}{\ln \left(\frac{1}{2} \right)} \approx 0.694.
\end{equation*}
Using Equation \eqref{quadratic} we obtain
\begin{gather*}
\frac{1-a^s}{b^s}=\frac{1-\left(\frac{1}{4}\right)^s}{\left(\frac{1}{2}\right)^s} = 1, \\ 
\frac{(1-b)(1-a^s)}{ba^s}=\frac{1-\left(\frac{1}{4}\right)^s}{\left(\frac{1}{4}\right)^s} = \frac{1+\sqrt5}{2} \geq 1.
\end{gather*}

Therefore Conditions (1) and (2) of Theorem \ref{2GthmN} hold and $\mathcal{H}^s(F_u) = \mathcal{H}^s(F_v) =1$. As $F_u \neq F_v$, applying Theorem \ref{P2M}, we conclude that neither $F_u$ nor $F_v$ is the attractor of any standard IFS with or without separation conditions, overlapping or otherwise. In fact we can also apply Theorem \ref{P2Q} or Theorem \ref{P2T} using the reflecting version of Condition (3), which means that neither $F_u$ nor $F_v$ is the attractor of any standard IFS, where the standard IFS may have defining similarities which reflect. 

\section{$n$-vertex IFSs}\label{five}

Before presenting the proof of Theorem \ref{P2Q} in Subsection \ref{five4}, we first discuss briefly Conditions (1), (2) and (3). 

\subsection{Theorem \ref{P2Q} - Condition (1)}\label{five1}

The next lemma shows that Condition (1) is in fact necessary for Theorem \ref{P2Q} and does not impose any effective restriction on the allowable directed graphs. In fact Lemma \ref{P2nv1} applies to any type of directed graph IFS as its proof is purely graph-theoretic. If  Condition (1) doesn't hold then all the simple cycles in the directed graph must pass through $u$. 
\begin{lem}
\label{P2nv1}
Let $\bigl( V, E^*, i, t, r, ((\mathbb{R},\left| \ \  \right|))_{v \in V}, (S_e)_{e \in E^1} \bigr)$ be an $n$-vertex $(n\geq 2)$ IFS with attractor $(F_u)_{u \in V}$. Let $u\in V$ be fixed and suppose that all the simple cycles in the directed graph are attached to $u$.

Then $F_u$ is the attractor of a standard IFS defined on $\mathbb{R}$. 
\end{lem}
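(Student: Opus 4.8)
The plan is to show that, under the hypothesis, the \emph{first-return paths} to $u$ provide the maps of a standard IFS whose attractor is $F_u$.

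First I would delete $u$ from the directed graph and note that the remaining graph on $V\setminus\{u\}$ is acyclic: any cycle lying in $V\setminus\{u\}$ would contain a simple cycle, and that simple cycle would not be attached to $u$, contrary to the hypothesis. Hence every directed walk contained in $V\setminus\{u\}$ has no repeated vertex, so it has at most $n-2$ edges. Next, call a path $\bp=e_1\cdots e_k\in E^*$ a first-return path if $i(\bp)=t(\bp)=u$ while $t(e_j)\neq u$ for $1\leq j<k$, and let $P$ denote the set of all such paths. Since the interior vertices $t(e_1),\dots,t(e_{k-1})$ form a walk in $V\setminus\{u\}$, they are distinct, so $k\leq n$; as $E^1$ is finite, $P$ is finite. (Strong connectivity also shows every $e\in E_u^1$ is the first edge of some member of $P$, so $P\neq\emptyset$ and in fact $\lvert P\rvert\geq\lvert E_u^1\rvert\geq 2$.)

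Then I would expand the invariance equation $F_u=\bigcup_{e\in E_u^1}S_e(F_{t(e)})$ repeatedly: whenever a term $S_{\be}(F_v)$ occurs with $v=t(\be)\neq u$, replace $F_v$ by $\bigcup_{f\in E_v^1}S_f(F_{t(f)})$, and leave terms $S_{\bp}(F_u)$ untouched. Any term reached without having returned to $u$ corresponds to a path $\be=e_1\cdots e_j$ with $t(e_1),\dots,t(e_j)$ a walk in $V\setminus\{u\}$, hence $j\leq n-1$; so after finitely many rounds every surviving term has the form $S_{\bp}(F_u)$ with $\bp\in P$, and each $\bp\in P$ arises exactly once (the term being determined by its edge sequence). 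This gives
\begin{equation*}
F_u=\bigcup_{\bp\in P}S_{\bp}(F_u).
\end{equation*}
Finally, each $S_{\bp}$ is a composition of contracting similarities of $\mathbb{R}$, hence a contracting similarity of ratio $r_{\bp}\in(0,1)$, so $\{S_{\bp}:\bp\in P\}$ is a standard IFS; by the Contraction Mapping Theorem it has a unique attractor, and the displayed equation exhibits $F_u$ as that attractor.

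I expect the only delicate point to be the bookkeeping in the expansion step — verifying that the substitution terminates and that the terminal terms are in bijection with $P$ — but this is governed entirely by the acyclicity of the graph on $V\setminus\{u\}$ established at the outset. An equivalent, perhaps tidier, route is to fix a topological ordering of $V\setminus\{u\}$ and derive the formula for $F_u$ by downward induction along it, expressing each $F_v$ ($v\neq u$) as a finite union of images $S_{\bp}(F_u)$ before substituting into the equation for $F_u$.
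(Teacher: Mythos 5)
Your proof is correct, and it reaches the conclusion by a cleaner route than the paper's. The paper iterates the invariance equation exactly $n$ times, observes that any path $\be\in E^n_{uv}$ with $v\neq u$ cannot be simple and hence (since every simple cycle passes through $u$) must revisit $u$, and then replaces each such leftover term $S_{\be}(F_{t(\be)})$ by the \emph{larger} set $S_{\bc}(F_u)$, where $\bc$ is the initial simple cycle of $\be$ at $u$; this enlargement is harmless because both sets lie inside $F_u$, and it yields a (somewhat redundant) standard IFS consisting of the length-$n$ cycles at $u$ together with some simple cycles. You instead establish that the graph on $V\setminus\{u\}$ is acyclic, use this to bound the depth of the substitution process, and arrive at the first-return decomposition $F_u=\bigcup_{\bp\in P}S_{\bp}(F_u)$, in which every branch genuinely terminates at $u$ rather than being absorbed into a larger set. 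The underlying graph-theoretic content is the same (the hypothesis forces every sufficiently long path from $u$ to return to $u$), but your version buys a canonical, non-redundant generating IFS and makes the termination mechanism explicit, at the cost of the bookkeeping in the expansion step, which you handle correctly via the acyclicity of $V\setminus\{u\}$. One cosmetic remark: your final appeal to the Contraction Mapping Theorem needs $|P|\geq 2$, which you supply via strong connectivity; the paper's proof quietly relies on the analogous fact that its union is over at least two maps, so you are if anything more careful on this point.
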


\begin{proof} We can iterate Equation \eqref{Invariance equation3} $n$ times to obtain
\begin{equation*}
 F_u =  \bigcup_{ \substack{\be \in E_u^n } }  S_{\be}(F_{t(\be)})  =  \bigcup_{ \substack{\be \in E_{uu}^n } }  S_{\be}(F_u)  \cup \bigcup_{ \substack{\be \in E_{uv}^n \\ v \in V \\ v \neq u } }  S_{\be}(F_{t(\be)}). 
\end{equation*} 
If the last union is empty then $F_u   =  \bigcup_{ \substack{\be \in E_{uu}^n } }  S_{\be}(F_u)$ and $F_u$ is a standard IFS attractor. So we assume the last union is non-empty and, enumerating its paths, we write $F_u$ as 
\begin{equation}
\label{F_u eq}
 F_u  = \bigcup_{ \substack{\be \in E_{uu}^n } }  S_{\be}(F_u)  \cup \bigcup_{i=1}^m  S_{\be_i}(F_{t(\be_i)}).
\end{equation}
A simple path, which is not a cycle, of length $n$ has a vertex list which contains exactly $n+1$ different vertices so none of the paths $\be_i$ can be simple. A path that is not simple must contain a simple cycle, which is attached to $u$,  and this implies that for a given path $\be_i$ as we travel along its vertex list from $u$ to $t(\be_i)$ we must revisit the vertex $u$ at least once. Writing $\be_i$ in terms of its edges as $\be_i = e_{i,1}e_{i,2}\cdots e_{i,n}$ it follows that we may put $\be_i = \bc_{i,u}\Bf_i$ where $\bc_{i,u}= e_{i,1}e_{i,2}\cdots e_{i,j}$ is a simple cycle attached to $u$ and $\Bf_i = e_{i,j+1}e_{i,j+2}\cdots e_{i,n}$ is a path from $u$ to $t(\be_i)$. A cycle is normally independent of its initial and terminal vertices but here the $u$ in $\bc_{i,u}$ signifies that we only allow $i(\bc_{i,u}) = i(e_{i,1}) = u = t(e_{i,j}) = t(\bc_{i,u})$. 

Now $S_{\be_i}(F_{t(\be_i)}) \subset S_{\bc_{i,u}}(F_{t(\bc_{i,u})}) \subset F_u$ which means we can replace $S_{\be_i}(F_{t(\be_i)})$ by $S_{\bc_{i,u}}(F_{t(\bc_{i,u})})$ in Equation \eqref{F_u eq} and write $F_u$ as
\begin{equation*}
 F_u  = \bigcup_{ \substack{\be \in E_{uu}^n } }  S_{\be}(F_u)  \cup \bigcup_{i=1}^m  S_{\bc_{i,u}}(F_{t(\bc_{i,u})}) = \bigcup_{ \substack{\be \in E_{uu}^n } }  S_{\be}(F_u)  \cup \bigcup_{i=1}^m  S_{\bc_{i,u}}(F_u).
\end{equation*}
Therefore $F_u$ is a standard IFS attractor.
\end{proof}
 
For a concrete example which illustrates Lemma \ref{P2nv1} consider the $2$-vertex IFS of Figure \ref{P22VertexUnitInterval}. As stated in Section \ref{two} the minimum requirements for any directed graph are that it is strongly connected with at least two edges leaving each vertex. It follows that an $n$-vertex IFS will have a directed graph which contains at least $2n$ edges. Suppose we modify the graph of Figure \ref{P22VertexUnitInterval} by deleting the loops $e_1$ and $e_3$, replacing them with an edge from $u$ to $v$ and one from $v$ to $u$ so that the directed graph now contains the required $4$ edges with two edges still leaving each vertex. All the simple cycles in the directed graph are now of length $2$ and pass through both $u$ and $v$. Iterating Equation \eqref{Invariance equation3} twice we obtain $F_u =  \bigcup_{ \substack{\be \in E_u^2 } }  S_{\be}(F_{t(\be)})  =  \bigcup_{ \substack{\be \in E_u^2 } }  S_{\be}(F_u) $ so that $F_u$ is a standard IFS attractor and similarly for $F_v$.

\begin{figure}[htb]
\begin{center}
\includegraphics[trim = 5mm 130mm 3mm 15mm, clip, scale =0.7]{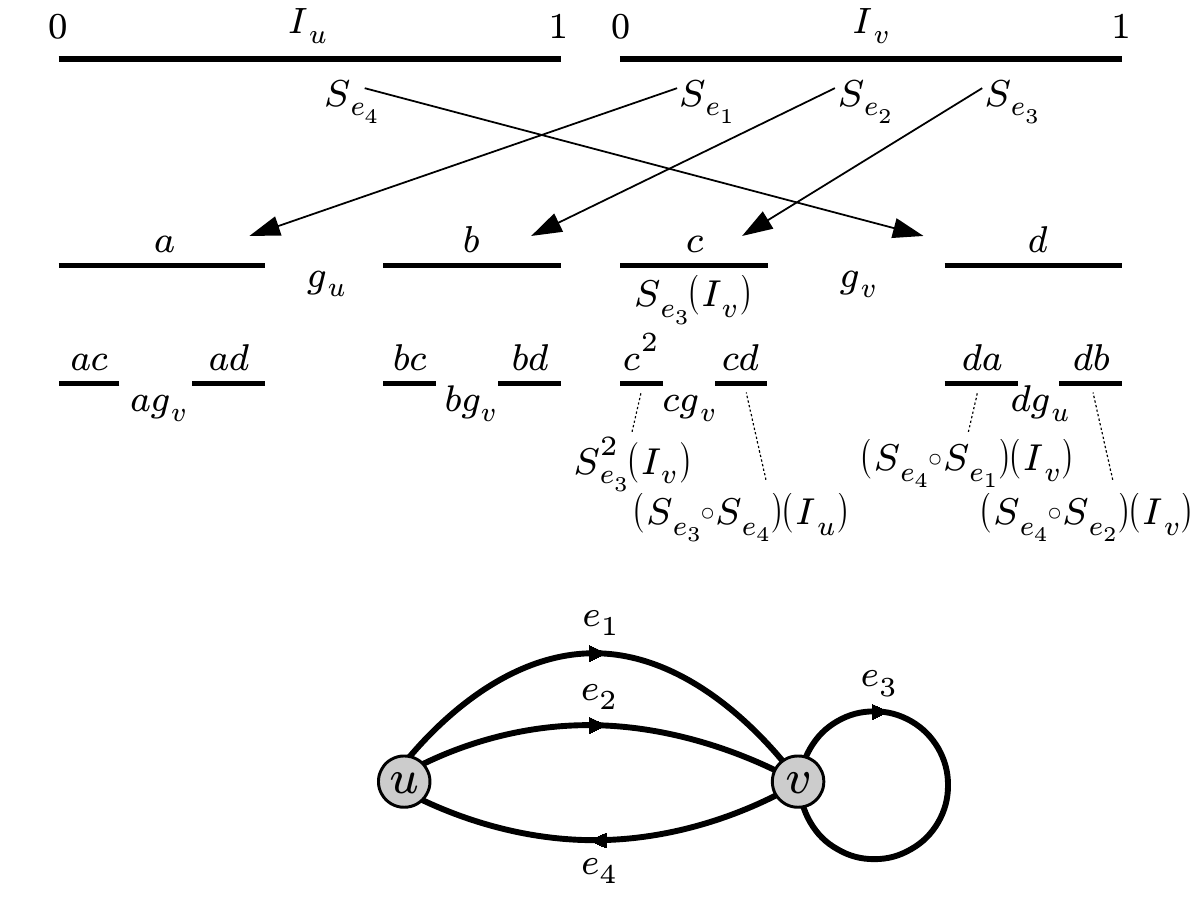}
\end{center}
\caption{A $2$-vertex IFS with just one loop in its directed graph.}
\label{P2VertexOneLoop}
\end{figure}

Now consider what happens if we retain just one loop, $e_3$, in modifying the graph of the $2$-vertex IFS of Figure \ref{P22VertexUnitInterval}. This situation is illustrated in Figure \ref{P2VertexOneLoop} where all the simple cycles are attached to $v$. Note that there is just one path, $e_3e_4$, of length $2$ from $v$ to $u$, that is $E_{vu}^2 = \left\{e_3e_4\right\}$, and $(S_{e_3}\circ S_{e_4})(F_u)\subset S_{e_3}(F_v) $. Following the proof of Lemma \ref{P2nv1} we can write $F_v$ as
\begin{align*}
F_v&= \bigcup_{ \substack{\be \in E_v^2 } }  S_{\be}(F_{t(\be)}) = \bigcup_{ \substack{\be \in E_{vv}^2 } }  S_{\be}(F_v)  \cup \bigcup_{ \substack{\be \in E_{vu}^2 } }  S_{\be}(F_u)  \\
&= S_{e_3}^2(F_v)  \cup (S_{e_4}\circ S_{e_1})(F_v) \cup (S_{e_4}\circ S_{e_2})(F_v) \cup (S_{e_3}\circ S_{e_4})(F_u) \\
&= S_{e_3}^2(F_v)  \cup (S_{e_4}\circ S_{e_1})(F_v) \cup (S_{e_4}\circ S_{e_2})(F_v) \cup S_{e_3}(F_v)  \\
&= (S_{e_4}\circ S_{e_1})(F_v) \cup (S_{e_4}\circ S_{e_2})(F_v) \cup S_{e_3}(F_v)
\end{align*}
where the last line follows since $S_{e_3}^2(F_v)\subset  S_{e_3}(F_v)$. This shows that $F_v$ is the attractor of a standard (CSSC) IFS. 

If Conditions (2) and (3) of Theorem \ref{P2Q} hold for $F_u$ then $F_u$ is not a standard IFS attractor. In fact it may also be possible to construct a proof, along the lines of the proofs of Lemmas \ref{P2O}, \ref{P2P} and Theorem \ref{P2M}, to prove that $F_u$ is not the attractor of any standard IFS provided only that $F_u \neq F_v$. 

This example, and the example illustrated in Figure \ref{P2VertexSubset} below, show that in certain cases $n$-vertex IFS attractors may have components which are a mix of standard IFS attractors and non-standard IFS components. For a complicated $n$-vertex (OSC) IFS if there is a component which is a standard (OSC) IFS attractor then this leads to an easier way of calculating the Hausdorff dimension, see Theorem \ref{dimension}. 

\subsection{Theorem \ref{P2Q} - Condition (2)}\label{five2}

The directed graph is strongly connected so there is always at least one simple path $\bp \in E^*_{uw}$ from the vertex $u$ to $w$. If Condition (2) holds then in particular it holds at the vertex $u$, since $u$ is in the vertex list of $\bp$, so that 
\begin{equation*}
\max G_u \leq \min G_u^1 \leq \max G_u^1 \leq \max G_u.
\end{equation*} 
This means all the level-$1$ gap lengths at the vertex $u$ must be equal with
\begin{equation}
\label{G_uG_u^1}
G_u^1 = \left\{g_u\right\} \text{ and }\max G_u =g_u
\end{equation} 
for some $g_u \in (0,1)$. 

As an example, for the $2$-vertex IFS of Figure \ref{P22VertexUnitInterval}, $G_u^1 = \left\{g_u\right\}$, $G_v^1 = \left\{g_v\right\}$ and by Lemma \ref{P2I}(a), $ \max G_u = \max\left\{g_u, bg_v\right\}$ so Condition (2) holds for $F_u$ if 
\begin{equation*}
bg_v \leq g_u \leq g_v.
\end{equation*} 
For the $2$-vertex IFS of Figure \ref{P2VertexOneLoop}, Condition (2) holds for $F_u$ if 
\begin{equation*}
\max\left\{ag_v, bg_v\right\} \leq g_u \leq g_v.
\end{equation*} 
For another example see the application of Theorem \ref{P2Q} in Subsection \ref{five3}. In general Condition (2) needs to be checked on a case by case basis. This is entirely possible as a constructive algorithm for calculating  sets of gap lengths as a finite union of cosets of finitely generated semigroups for any $n$-vertex (CSSC) IFS defined on the unit interval is given in \cite[Proposition 2.3.4]{phdthesis_Boore}. This means that $\max G_u$ can always be determined.

Condition (2) prevents similarity maps of components of attractors from spanning the gaps between level-$1$ intervals as we prove in the next Lemma.

\begin{lem}
\label{P2nospan}
Let $\bigl( V, E^*, i, t, r, ((\mathbb{R},\left| \ \  \right|))_{v \in V}, (S_e)_{e \in E^1} \bigr)$ be an $n$-vertex ($n\geq 2$, CSSC) IFS, defined on the unit interval, with attractor $(F_u)_{u \in V}$. Let $S: \mathbb{R} \rightarrow \mathbb{R}$ be a contracting similarity with contracting similarity ratio $r$, $0<r<1$, such that $S(F_u) \subset F_v$ for some, not necessarily distinct, $u,v \in V$. Suppose that $\max G_u \leq \min G_v^1$. 

Then $S(I_u)$ is contained in a level-$1$ interval of $F_v$, that is $S(I_u)\subset S(I_{t(e)})$ for some $e \in E_v^1$.
\end{lem}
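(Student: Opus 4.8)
The plan is a proof by contradiction. I would suppose that $S(I_u)$ meets at least two distinct level-$1$ intervals of $F_v$ and then produce a gap interval of $S(F_u)$ whose length is at least $\max G_u$; since $S$ is a similarity of ratio $r$ we have $G(S(F_u))=r\,G_u$, so $\max G(S(F_u))=r\max G_u<\max G_u$, and this is the required contradiction. (Throughout, the maxima and minima invoked exist because $V$ and $E^1$ are finite and, by the CSSC, the gap-length sets of each component of the attractor are well defined.)

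First I would record the elementary geometry. Because the IFS is defined on the unit interval, $\{0,1\}\subset F_u\subset I_u=[0,1]$, so $S(I_u)=C(S(F_u))$ is the closed interval whose endpoints are $S(0)$ and $S(1)$; write $L$ for its left and $R$ for its right endpoint (which of $S(0),S(1)$ is which depends only on whether $S$ reverses orientation, so no non-reflecting hypothesis is needed). Both $L,R\in S(F_u)\subset F_v\subset F_v^1$. The level-$1$ intervals $S_e(I_{t(e)})$, $e\in E_v^1$, are pairwise disjoint closed intervals by the CSSC, and there are at least two of them since every vertex emits at least two edges; list them in increasing order as $J_1<J_2<\dots<J_m$. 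One checks that the gap intervals of $F_v^1$ are precisely $[\max J_i,\min J_{i+1}]$ for $1\le i<m$, so their lengths are exactly the elements of $G_v^1$. Now the case split: if $L$ and $R$ lie in one and the same $J_i$, then by convexity $S(I_u)=[L,R]\subset J_i$ and we are done. Otherwise $L\in J_a$ and $R\in J_b$ with $a<b$, and then the whole gap interval $J:=[\max J_a,\min J_{a+1}]$ lies inside $[L,R]=S(I_u)$, with $|J|\in G_v^1$, hence $|J|\ge\min G_v^1\ge\max G_u$ by hypothesis.

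The one step needing care — the real content of the lemma — is converting $J$ into a genuine gap interval of $S(F_u)$. Here I would set $p'=\max\{x\in S(F_u):x\le\min J\}$ and $q'=\min\{x\in S(F_u):x\ge\max J\}$; both sets are non-empty (they contain $L$ and $R$ respectively) and $S(F_u)$ is compact, so $p',q'\in S(F_u)$ and $p'\le\min J\le\max J\le q'$. Since $S(F_u)\subset F_v\subset F_v^1$, the open interior of $J$ contains no point of $S(F_u)$; combined with the extremal choice of $p'$ and $q'$, the open interval $(p',q')$ contains no point of $S(F_u)$ either. Thus $[p',q']$ is a gap interval of $S(F_u)$ with $q'-p'\ge|J|\ge\max G_u$, so $\max G(S(F_u))\ge q'-p'\ge\max G_u$. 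On the other hand $\max G(S(F_u))=r\max G_u<\max G_u$, a contradiction. Hence $S(I_u)$ is contained in a single level-$1$ interval of $F_v$, i.e. $S(I_u)\subset S_e(I_{t(e)})$ for some $e\in E_v^1$, which is the claim. The only place I expect any fiddliness is in verifying cleanly that the gap intervals of $F_v^1$ are exactly those between consecutive $J_i$ and that $(p',q')$ genuinely misses $S(F_u)$; neither is deep, but both should be spelled out.
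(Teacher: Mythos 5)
Your proposal is correct and follows essentially the same route as the paper: the whole argument rests on the strict inequality $\max G(S(F_u))=r\max G_u<\max G_u\leq\min G_v^1$, from which spanning a level-$1$ gap of $F_v$ is impossible. The paper states this inequality (splitting, somewhat redundantly, into the cases $v=u$ and $v\neq u$) and declares it "enough"; you supply the details it leaves implicit, namely that a spanned level-$1$ gap of $F_v$ would sit inside a gap interval of $S(F_u)$ of length at least $\min G_v^1$.
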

\begin{proof}
First suppose $v=u$ with $S(F_u) \subset F_u$ and $\max G_u \leq \min G_u^1$. By \eqref{G_uG_u^1}, $\max G(S(F_u))=r\max G_u= rg_u < g_u= \min G_u^1$, which is enough to ensure that $S(I_u)$ doesn't span any gap between any level-$1$ intervals of $F_u$.

Secondly if $v\neq u$ then $\max G(S(F_u))=r\max G_u< \max G_u \leq \min G_v^1$ which again is enough to ensure that $S(I_u)$ doesn't span any gap between any level-$1$ intervals of $F_v$.
\end{proof}

\subsection{Theorem \ref{P2Q} - Condition (3)}\label{five3}
We state the next lemma for the purposes of discussion, omitting the proof.
\begin{lem}
\label{P2R}
Let $\bigl( V, E^*, i, t, r, ((\mathbb{R},\left| \ \  \right|))_{v \in V}, (S_e)_{e \in E^1} \bigr)$ be an $n$-vertex $(n\geq 2)$ IFS and suppose that its attractor $( F_u)_{u \in  V}$ has exactly $m$ distinct components with $1\leq m < n$.

Then an $m$-vertex IFS, $\bigl( V^\prime, E^{*\prime}, i^\prime, t^\prime, r^\prime, ((\mathbb{R},\left| \ \  \right|))_{v \in V^\prime}, (S^\prime_e)_{e \in E^{1\prime}} \bigr)$, can be constructed which has an attractor  $(F^\prime_u)_{u \in V^\prime}$ that consists of exactly the $m$ distinct components of  $( F_u)_{u \in  V}$. 
\end{lem}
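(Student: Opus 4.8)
The plan is to realise the required $m$-vertex IFS as a quotient of the given directed graph by the relation ``having the same attractor component''. First I would define $u\sim v$ on $V$ by $F_u=F_v$; since there are exactly $m$ distinct components, the quotient $V^\prime=V/\!\sim$ has exactly $m$ elements, and for an equivalence class $\mathcal C$ we may write $F_{\mathcal C}$ for the common value $F_x$, $x\in\mathcal C$. Then I would keep \emph{all} of the original edges but relabel their endpoints: each $e\in E^1$ gives a new edge $e^\prime$ with $i^\prime(e^\prime)=[\,i(e)\,]$, $t^\prime(e^\prime)=[\,t(e)\,]$, ratio $r^\prime_{e^\prime}=r_e$ and defining map $S^\prime_{e^\prime}=S_e$; this makes sense because every vertex space is $\mathbb R$ with its usual metric, so $S_e$ is already a contracting similarity of $\mathbb R$. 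Thus $E^{1\prime}$ is a relabelled copy of $E^1$ and only the incidence data changes.

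Next I would check that this data really is an $m$-vertex IFS in the sense of Section \ref{two1}: any directed path $z_0z_1\cdots z_k$ in the original graph projects edge-by-edge to the directed path $[z_0][z_1]\cdots[z_k]$ in the new graph, so strong connectedness is inherited, and each class $\mathcal C$ has $\sum_{x\in\mathcal C}|E_x^1|\geq 2$ outgoing edges. The main step is then to verify that $(F_{\mathcal C})_{\mathcal C\in V^\prime}$ satisfies the invariance equation of the new IFS. For any class $\mathcal C$,
\begin{equation*}
\bigcup_{e^\prime\in E_{\mathcal C}^{1\prime}}S^\prime_{e^\prime}\bigl(F_{t^\prime(e^\prime)}\bigr)=\bigcup_{x\in\mathcal C}\ \bigcup_{e\in E_x^1}S_e\bigl(F_{[t(e)]}\bigr)=\bigcup_{x\in\mathcal C}\ \bigcup_{e\in E_x^1}S_e\bigl(F_{t(e)}\bigr)=\bigcup_{x\in\mathcal C}F_x=F_{\mathcal C},
\end{equation*}
using $F_{[t(e)]}=F_{t(e)}$ and then Equation \eqref{Invariance equation3} at each $x\in\mathcal C$. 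Hence, by the uniqueness of the attractor guaranteed by the Contraction Mapping Theorem (exactly as in the derivation of \eqref{Invariance equation3}), the attractor $(F^\prime_{\mathcal C})_{\mathcal C\in V^\prime}$ of the new IFS equals $(F_{\mathcal C})_{\mathcal C\in V^\prime}$, whose components are precisely the $m$ distinct components of $(F_u)_{u\in V}$.

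The step I expect to need the most care is the set-up of the edge set, specifically the strong-connectedness requirement. The tempting shortcut of keeping, for each class, only the outgoing edges of a single chosen representative can break strong connectedness, since the only edge leaving a class may issue from a non-representative vertex of that class (this shortcut is fine when $m=1$, which is the case used in Lemma \ref{P2G}, but not in general). Retaining the whole edge set and merely relabelling avoids this, at the price of redundant edges, i.e.\ several edges leaving $\mathcal C$ whose images all lie in the single set $F_{\mathcal C}$; but the displayed identity shows this redundancy changes nothing, so once the edges are organised this way I expect the argument to close without difficulty.
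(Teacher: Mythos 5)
The paper states Lemma \ref{P2R} ``for the purposes of discussion, omitting the proof,'' so there is no authorial argument to compare yours against; judged on its own, your quotient construction is correct and complete. The key points all check out: the relation $u\sim v$ iff $F_u=F_v$ partitions $V$ into exactly $m$ classes; carrying over \emph{every} edge with relabelled endpoints preserves strong connectedness (a path in the original graph projects edge-by-edge to a path in the quotient, since $t(e_i)=i(e_{i+1})$ gives $[t(e_i)]=[i(e_{i+1})]$) and gives each class at least two outgoing edges; the displayed computation shows $(F_{\mathcal C})_{\mathcal C\in V^\prime}$ satisfies the new invariance equation, and uniqueness of the attractor under the Contraction Mapping Theorem finishes it. Your cautionary remark about the shortcut of keeping only one representative's outgoing edges is well taken --- that really can disconnect the quotient graph --- and the redundancy you accept instead (possibly parallel edges between classes, even with identical similarities) is harmless in the paper's framework, which already permits multi-edges and loops. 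The one cosmetic point worth noting is that the resulting $m$-vertex IFS need not inherit any separation condition even if the original had one, but the lemma as stated asks for none, so this does not affect correctness.
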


\begin{figure}[htb]
\begin{center}
\includegraphics[trim = 10mm 160mm 10mm 15mm, clip, scale =0.7]{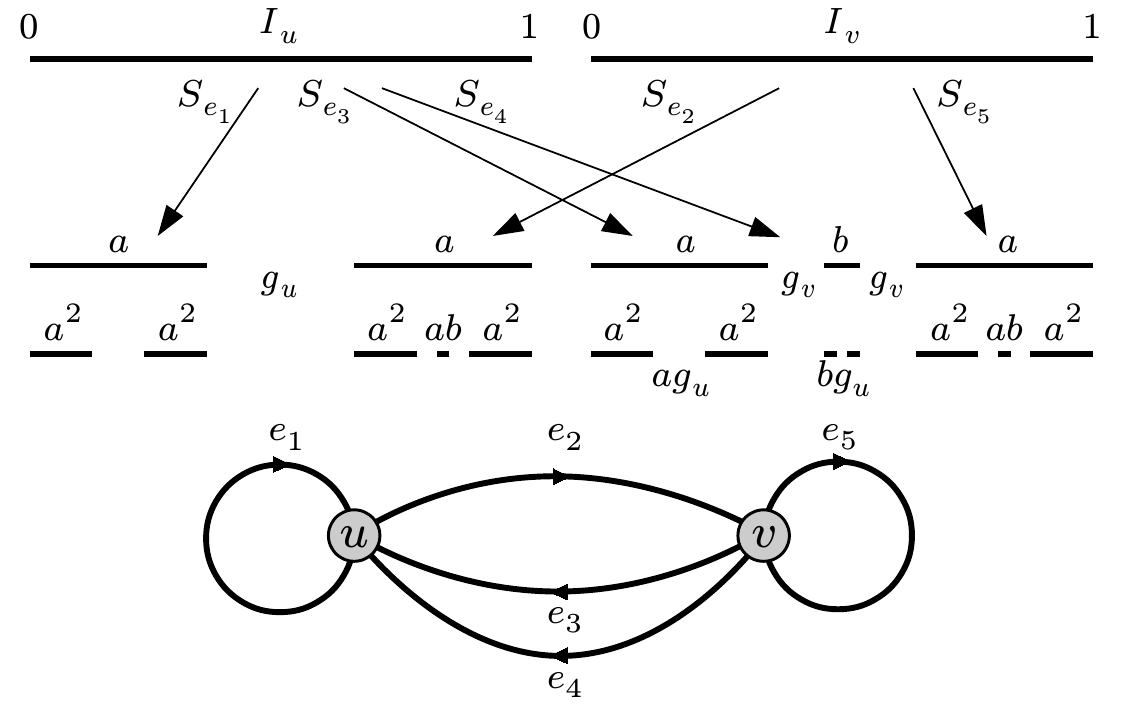}
\end{center}
\caption{A $2$-vertex IFS with attractor $(F_u,F_v)$ such that $F_u \neq F_v$, $F_u\subset F_v$ where $F_u$ is a standard IFS attractor.}
\label{P2VertexSubset}
\end{figure}
Lemma \ref{P2R} means that $F_u \neq F_v$ for all $u,v \in V$, $u \neq v$, would seem to be a natural generalisation of the condition of Theorem \ref{P2M}. However we can't do without $F_u\not\subset F_v$ in Condition (3) as it is needed in the proof of Subsection \ref{five4} . Clearly $F_u\not\subset F_v$ is stronger than $F_u \neq F_v$ since $F_u\not\subset F_v$ implies $F_u \neq F_v$ but the converse isn't true. The $2$-vertex (CSSC) IFS illustrated in Figure \ref{P2VertexSubset} provides an example for which Condition (1) of Theorem \ref{P2Q} holds and $F_u \neq F_v$,  but where $F_u\subset F_v$ and $F_u$ is a standard IFS attractor. Formally we can prove this as follows. From \eqref{Invariance equation3}
\begin{gather*}
F_u=S_{e_1}(F_u) \cup S_{e_2}(F_v) \\
F_v=S_{e_3}(F_u) \cup S_{e_4}(F_u) \cup S_{e_5}(F_v).
\end{gather*} 
Since $S_{e_1}=S_{e_3}$ and $S_{e_2}=S_{e_5}$ we obtain
\begin{equation*}
F_v=S_{e_1}(F_u) \cup S_{e_4}(F_u) \cup S_{e_2}(F_v)=F_u \cup S_{e_4}(F_u),
\end{equation*} 
which proves $F_u \neq F_v$ and $F_u \subset F_v$. It follows that
\begin{gather*}
F_u=S_{e_1}(F_u) \cup S_{e_2}(F_v) =S_{e_1}(F_u) \cup S_{e_2}(F_u \cup S_{e_4}(F_u))\\ 
=S_{e_1}(F_u) \cup S_{e_2}(F_u)\cup  (S_{e_2}\circ S_{e_4})(F_u)
\end{gather*} 
which shows $F_u$ is a standard IFS attractor. In fact the SSC holds for $F_u$ and not the CSSC. This simple example shows that the correct generalisation of the condition $F_u \neq F_v$ of Theorem \ref{P2M} is the stronger requirement that $F_u\not\subset F_v$ and $F_v\not\subset F_u$.  

Now suppose we change the edge $e_4$ in the directed graph of Figure \ref{P2VertexSubset} so that it becomes a loop at the vertex $v$. In this case $F_v = F_u \cup S_{e_4}(F_v)$ so that we still have $F_u \neq F_v$ and $F_u \subset F_v$ but now we have no way of deciding whether or not $F_u$ is a standard IFS attractor. New insights are needed for special cases like this.  

We end this subsection with an application of Theorem \ref{P2Q}. The notation used in Theorem \ref{P2Q} makes it appear more complicated than it actually is, so now we use the example in Figure \ref{P2VertexSubset} to show that it is easy to use in practice by applying it to $F_v$. We can put $\bc_u= e_1$ for Condition (1) and $\bp=e_3 \in E_{vu}^*$ with $V^\prime = \left\{u,v\right\}$. Clearly from Figure \ref{P2VertexSubset}, $\max G_v = \max \left\{g_v, ag_u, bg_u\right\}$ and Condition (2) requires 
\begin{align*}
\max G_v = \max \left\{g_v, ag_u, bg_u\right\} &\leq g_v = \min G_v^1, \\
\max G_v = \max \left\{g_v, ag_u, bg_u\right\} &\leq g_u = \min G_u^1. 
\end{align*}
It is always the case that $g_v < g_u$ so we only need $\max \left\{ag_u, bg_u\right\} \leq g_v $ to ensure Condition (2) holds. As examples $a=g_u=1/3$, $b=g_v=1/9$ will do, as will $a=b=1/4$, $g_u=1/2$, $g_v=1/8$. Clearly $F_v \not\subset F_u$ so that Condition (3) holds. Therefore if $\max \left\{ag_u, bg_u\right\} \leq g_v $ then $F_v$ is not a standard IFS attractor.

If $\max \left\{ag_u, bg_u\right\} > g_v $ is $F_v$ a standard IFS attractor? We discuss how further progress may be made to weaken Condition (2) and provide an answer in Section \ref{six}. 

\subsection{Proof of Theorem \ref{P2Q}}\label{five4}
The $n$-vertex directed graph IFS of  Theorem \ref{P2Q} is defined on the unit interval so that for each component of the attractor  $(F_u)_{u \in V}$, $\left\{0,1\right\}\subset F_u \subset I_u = [0,1]$. The proof uses the same ideas as the proof of Theorem \ref{P2M} in Subsection \ref{three1} but because it deals with general paths and cycles we present a formal proof by induction.  

\begin{proof}
Let $\be\in E_u^\mathbb{N}$ be the infinite path in the graph which starts at the vertex $u$, travels along the edges of the simple path $\bp$ until it reaches the vertex $w$ and then travels round the edges of the simple cycle $\bc_w$ indefinitely. The important thing about the path $\be$ is that the vertex $u$ appears just once in its vertex list as the initial vertex and doesn't appear thereafter. We write $\be$ as $\be=e_1e_2e_3\cdots $ with $i(\be)=i(e_1)=u$ and $t(e_i)\neq u$ for all $i \in \mathbb{N}$.

For a contradiction suppose 
\begin{equation}
\label{standardIFS}
 F_u = \bigcup_{j=1}^{n}  S_j(F_{u}),
\end{equation}
for some $n \geq 2$ so that $F_u$ is a standard IFS attractor. We now use induction to construct an infinite sequence $(n_k)$, with $n_k \in \mathbb{N}$ and $1 < n_{k+1} < n_k$ for each $k \in \mathbb{N}$, which is the required contradiction.

For $k \in \mathbb{N}$ let $P(k)$ be the following statement.

\medskip

\emph{For each $i$, $1 \leq i \leq k$, we can write $ F_{t(e_i)}$ as} 
 \begin{equation*}
 F_{t(e_i)} = \bigcup_{j=1}^{n_i}  S_{i,j}(F_{u}),
\end{equation*}
\indent \emph{for some contracting similarities $S_{i,j}$, with $1<n_k < n_{k-1}<\cdots < n_2 < n_1$.}

\medskip

\emph{Induction base}

By Condition (2) and Lemma \ref{P2nospan}, each similarity $S_j$ in Equation \eqref{standardIFS} is such that $S_j(I_u)$ is contained in a level-$1$ interval of $F_u$. Since $S_{e_1}(I_{t(e_1)})$ is a level-$1$ interval of $F_u$ there is at least one $S_j(I_u)$ with $S_j(F_u)\subset S_j(I_u)\subset S_{e_1}(I_{t(e_1)})$. Relabelling all the similarities of this type we obtain $\bigcup_{j=1}^{n_1}  S_j^\prime(I_{u})\subset S_{e_1}(I_{t(e_1)})$ where $1 \leq n_1 < n$. It follows by Equation \eqref{Invariance equation3} and the CSSC that in fact
\begin{equation*}
\bigcup_{j=1}^{n_1}  S_j^\prime(F_{u}) = S_{e_1}(F_{t(e_1)}).
\end{equation*}
If $n_1=1$ then $S_1^\prime(F_u)=S_{e_1}(F_{t(e_1)})$ for some single similarity $S_1^\prime$ which implies $S_1^\prime(0)=S_{e_1}(0)$, $S_1^\prime(1)=S_{e_1}(1)$,  so that $S_1^\prime=S_{e_1}$ and $F_u=F_{t(e_1)}$. However $u \neq {t(e_1)}$ so by Condition (3), $F_u \neq F_{t(e_1)}$. This means $1 < n_1 < n$. If for any $j$, $\left|S_j^\prime(F_u)\right|=\left|S_{e_1}(F_{t(e_1)})\right|$ then as  $S_j^\prime(F_u) \subset S_{e_1}(F_{t(e_1)})$ it follows that $S_j^\prime(0)=S_{e_1}(0)$, $S_j^\prime(1)=S_{e_1}(1)$, so that $S_j^\prime=S_{e_1}$ and $F_u \subset F_{t(e_1)}$. Again because $u \neq t(e_1)$ this is impossible by Condition (3). Therefore $\left|S_j^\prime(F_u)\right|<\left|S_{e_1}(F_{t(e_1)})\right|$ and the maps $S_{e_1}^{-1}\circ S_j^\prime$ are contracting similarities. Putting $S_{1,j}=S_{e_1}^{-1}\circ S_j^\prime$ we can write $F_{t(e_1)}$ as  
\begin{equation*}
F_{t(e_1)} = \bigcup_{j=1}^{n_1}  S_{1,j}(F_{u}),
\end{equation*}
for some contracting similarities $S_{1,j}$, with $1 < n_1$. This proves P(1).

\medskip

\emph{Induction hypothesis}

For $k \in \mathbb{N}$ we assume $P(k)$ is true. 

\medskip

\emph{Induction step}

From the induction hypothesis we can write $ F_{t(e_k)}$ as 
 \begin{equation}
\label{induction step}
 F_{t(e_k)} = \bigcup_{j=1}^{n_k}  S_{k,j}(F_{u}),
\end{equation}
for some similarities $S_{k,j}$. We now use exactly the same argument as in the induction base omitting some of the details. 

By Condition (2) and Lemma \ref{P2nospan}, each similarity $S_{k,j}$ in Equation \eqref{induction step} is such that $S_{k,j}(I_u)$ is contained in a level-$1$ interval of $ F_{t(e_k)}$. Since $S_{e_{k+1}}(I_{t(e_{k+1})})$ is a level-$1$ interval of $ F_{t(e_k)}$ there is at least one $S_{k,j}(I_u)$ with $S_{k,j}(F_u)\subset S_{k,j}(I_u)\subset S_{e_{k+1}}(I_{t(e_{k+1})})$. Relabelling all the similarities of this type we obtain $\bigcup_{j=1}^{n_{k+1}}  S_{k,j}^\prime(I_{u})\subset  S_{e_{k+1}}(I_{t(e_{k+1})})$ where $1 \leq n_{k+1} < n_k$. It follows by Equation \eqref{Invariance equation3} and the CSSC that in fact
\begin{equation*}
\bigcup_{j=1}^{n_{k+1}} S_{k,j}^\prime(F_{u}) = S_{e_{k+1}}(F_{t(e_{k+1})}).
\end{equation*}
If $n_{k+1}=1$ then $S_{k,1}^\prime(F_u)=S_{e_{k+1}}(F_{t(e_{k+1})})$ for some single similarity $S_{k,1}^\prime$  so that $S_{k,1}^\prime=S_{e_{k+1}}$ and $F_u=F_{t(e_{k+1})}$. However $u \neq t(e_{k+1})$ and by Condition (3) $F_u \neq F_{t(e_{k+1})}$. This means $1 < n_{k+1} < n_k$. If for any $j$, $\left| S_{k,j}^\prime(F_u)\right|=\left|S_{e_{k+1}}(F_{t(e_{k+1})})\right|$ then as  $ S_{k,j}^\prime(F_u) \subset S_{e_{k+1}}(F_{t(e_{k+1})})$ it follows that $ S_{k,j}^\prime=S_{e_{k+1}}$ and $F_u \subset F_{t(e_{k+1})}$. Again because $u \neq t(e_{k+1})$ this is impossible by Condition (3). Therefore $\left| S_{k,j}^\prime(F_u)\right|<\left|S_{e_{k+1}}(F_{t(e_{k+1})})\right|$ and the maps $S_{e_{k+1}}^{-1}\circ  S_{k,j}^\prime$ are contracting similarities. Putting $S_{k+1,j}=S_{e_{k+1}}^{-1}\circ  S_{k,j}^\prime$ we can write $F_{t(e_{k+1})}$ as   
\begin{equation*}
F_{t(e_{k+1})} = \bigcup_{j=1}^{n_{k+1}}  S_{k+1,j}(F_{u}),
\end{equation*}
for some contracting similarities $S_{k+1,j}$, with $1 < n_{k+1} < n_k$. Using the induction hypothesis, this means that for each $i$, $1 \leq i \leq k+1$, we can write $ F_{t(e_i)}$ as 
 \begin{equation*}
 F_{t(e_i)} = \bigcup_{j=1}^{n_i}  S_{i,j}(F_{u}),
\end{equation*}
for some similarities $S_{i,j}$, with $1<n_{k+1}<n_k < n_{k-1}<\cdots < n_2 < n_1$.  This shows that $P(k)$ implies $P(k+1)$ and completes the induction step.

\medskip

Therefore $P(k)$ is true for all $k \in \mathbb{N}$ and we have constructed an infinite sequence $(n_k)$, with $n_k \in \mathbb{N}$ and $1 < n_{k+1} < n_k$ for each $k \in \mathbb{N}$. This contradiction completes the proof of Theorem \ref{P2Q}.
\end{proof}

\subsection{Proof of Theorem \ref{P2T}}\label{five5}
The proof of Theorem \ref{P2T} relies on extending some of Feng and Wang's results for standard IFSs to $n$-vertex IFSs, see \cite{Paper_FengWang}.  We need the following two lemmas where we have omitted the proofs as they generalise in a straightforward way from \cite[Lemmas 3.5.1 and 3.5.7]{phdthesis_Boore}, see also \cite[Lemmas 7.1 and 7.3]{Paper_GCB_KJF} and  \cite[Theorem 4.1]{Paper_FengWang}. Lemma \ref{2GlemO} is only needed in a proof of Lemma \ref{2GlemT},  however we discuss it further in Section \ref{six}. For the calculation of the Hausdorff dimension, $s$, see Theorem \ref{dimension}.

\begin{lem}
\label{2GlemO}Let $\bigl( V, E^*, i, t, r, ((\mathbb{R},\left| \ \  \right|))_{v \in V}, (S_e)_{e \in E^1} \bigr)$ be an $n$-vertex ($n\geq 2$, OSC) IFS, defined on the unit interval, with attractor $(F_u)_{u \in V}$ and $s=\dimH F_u$. Let $S: \mathbb{R} \rightarrow \mathbb{R}$ be a contracting similarity such that $S(F_u) \subset F_v$ for some, not necessarily distinct, $u,v \in V$, and let $S(I_u)=\left[a_S,b_S\right]$. Suppose that $\mathcal{H}^s(F_u)=1$.
Then
\begin{align*}
&\textup{(a)}  \quad \mathcal{H}^s(S(F_u))=\mathcal{H}^s(F_v\cap S(I_u))=\left(b_S-a_S\right)^s, \\
&\textup{(b)}  \quad S(F_u)= F_v\cap S(I_u). 
\end{align*}   

\end{lem}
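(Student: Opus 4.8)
The plan is to argue that the bound $\mathcal H^s(F_v \cap S(I_u)) \geq (b_S - a_S)^s$ forces equality everywhere, and then deduce the set-theoretic statement in (b) from the measure statement in (a). First I would record the elementary facts. Since $(S_e)_{e\in E^1}$ are contracting similarities and the OSC holds, Theorem \ref{dimension} gives $s=\dimH F_v = \dimB F_v$ with $0 < \mathcal H^s(F_v) < \infty$ for every vertex, and the usual scaling of Hausdorff measure under similarities gives $\mathcal H^s(S(F_u)) = r_S^s\,\mathcal H^s(F_u) = r_S^s = (b_S-a_S)^s$, using $\mathcal H^s(F_u)=1$ and $r_S = b_S - a_S$ (the interval $I_u=[0,1]$ has length $1$). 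Because $S(F_u)\subset F_v$ and $S(F_u) = S(F_u)\cap S(I_u) \subset F_v \cap S(I_u)$, monotonicity of $\mathcal H^s$ immediately yields
\begin{equation*}
(b_S - a_S)^s = \mathcal H^s(S(F_u)) \leq \mathcal H^s(F_v \cap S(I_u)).
\end{equation*}
So the whole content of (a) is the reverse inequality $\mathcal H^s(F_v \cap S(I_u)) \leq (b_S - a_S)^s$.

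For that reverse inequality I would follow the Feng--Wang mechanism referenced in the statement (\cite[Theorem 4.1]{Paper_FengWang}, cf. \cite[Lemmas 3.5.1, 3.5.7]{phdthesis_Boore} and \cite[Lemmas 7.1, 7.3]{Paper_GCB_KJF}). The key point is a ``density/optimality'' observation: the natural $s$-dimensional measure $\mu_v := \mathcal H^s|_{F_v}$ is, up to normalisation, the self-similar measure attached to the invariance equation \eqref{Invariance equation3}, and since $\mathcal H^s(F_v)$ achieves its maximal possible value relative to the diameter of $I_v=[0,1]$ (namely $\mathcal H^s(F_v)=1=|I_v|^s$ when $v$ is such a vertex, and in general by the subadditivity below), one has $\mathcal H^s(F_v \cap J) \leq |J|^s$ for every subinterval $J \subset I_v$ that is a union of level-$k$ intervals, and hence for every subinterval by a limiting argument over $k$. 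Concretely: iterate \eqref{Invariance equation3} to write $F_v = \bigcup_{\be \in E_v^k} S_\be(F_{t(\be)})$ with the pieces $S_\be(I_{t(\be)})$ pairwise disjoint (by the CSSC, or essentially disjoint under the OSC), so
\begin{equation*}
\mathcal H^s(F_v \cap S(I_u)) \leq \sum_{\substack{\be \in E_v^k \\ S_\be(I_{t(\be)}) \cap S(I_u) \neq \emptyset}} r_\be^s\, \mathcal H^s(F_{t(\be)}) = \sum r_\be^s,
\end{equation*}
using $\mathcal H^s(F_{t(\be)})=1$ for every vertex appearing (this is exactly where the hypothesis $\mathcal H^s(F_u)=1$ is propagated — under the OSC all components have $\mathcal H^s = $ the same value, which we have normalised to $1$). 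As $k\to\infty$ the level-$k$ intervals meeting $S(I_u)$ but not contained in it have total length tending to $0$ (here one uses that $\max G^k \to 0$ and the CSSC-guaranteed control of gap lengths, Section \ref{two1}), so the right-hand side converges to $r_S^s = (b_S - a_S)^s$, giving the reverse inequality and hence (a).

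Finally, (b) follows from (a) by a measure-exhaustion argument. We already have $S(F_u) \subset F_v \cap S(I_u)$ and, by (a), $\mathcal H^s(S(F_u)) = \mathcal H^s(F_v \cap S(I_u)) < \infty$; therefore $\mathcal H^s\big((F_v\cap S(I_u)) \setminus S(F_u)\big) = 0$. To upgrade this null-set statement to genuine equality of closed sets, suppose $x \in (F_v \cap S(I_u)) \setminus S(F_u)$. Both $F_v$ and $S(F_u)$ are compact, so there is a small open interval $J\ni x$ with $\overline J \subset S(I_u)$ and $J \cap S(F_u) = \emptyset$; but $\overline J$ still meets $F_v$ (indeed contains the point $x$), and running the same upper bound as above localised to $J$ together with the fact that $S(F_u)$ already accounts for all of $\mathcal H^s(F_v \cap S(I_u))$ forces $\mathcal H^s(F_v \cap J) = 0$, which contradicts $x \in F_v$ being a point of positive local $\mathcal H^s$-density — a standard consequence of the self-similar (``Ahlfors-regular'') structure of $F_v$ under the OSC, so that every relatively open neighbourhood of a point of $F_v$ has positive $\mathcal H^s$-measure. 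Hence no such $x$ exists and $F_v \cap S(I_u) = S(F_u)$.

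The main obstacle I expect is the reverse inequality in (a): making rigorous that the level-$k$ pieces straddling the endpoints of $S(I_u)$ contribute negligibly in the limit, and that the normalisation ``$\mathcal H^s = 1$ at one vertex'' genuinely forces ``$\mathcal H^s = 1$ at every vertex appearing'' — this is precisely the extension of Feng--Wang's optimal-measure characterisation from the single-vertex case to the graph-directed setting, and is the step the authors have chosen to cite rather than reprove.
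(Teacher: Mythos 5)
The paper never actually proves this lemma --- it is stated with the proof explicitly omitted, the authors citing \cite[Lemma 3.5.1]{phdthesis_Boore}, \cite[Lemma 7.1]{Paper_GCB_KJF} and \cite[Theorem 4.1]{Paper_FengWang} --- so there is no in-text argument to compare yours against; judged on its own terms, your proposal has a genuine gap at exactly the step you flag as the main obstacle, and it is not a gap of detail. First, the parenthetical claim that ``under the OSC all components have $\mathcal{H}^s$ equal to the same value, which we have normalised to $1$'' is false: the vector $(\mathcal{H}^s(F_v))_{v\in V}$ is a Perron eigenvector of the matrix $\bA(s)$ of Theorem \ref{dimension}, and its entries are in general distinct. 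Theorem \ref{2GthmN} of this very paper exhibits $\mathcal{H}^s(F_u)=1$ alongside $\mathcal{H}^s(F_v)=(1-a^s)/b^s$, which need not equal $1$. Hence your simplification $\sum r_{\be}^s\,\mathcal{H}^s(F_{t(\be)})=\sum r_{\be}^s$ fails. Second, and more seriously, the assertion that the right-hand side converges to $(b_S-a_S)^s$ as $k\to\infty$ is unsubstantiated and essentially circular: under the OSC the level-$k$ pieces overlap in $\mathcal{H}^s$-null sets, so after discarding the at most two straddling cylinders the weighted sum $\sum r_{\be}^s\,\mathcal{H}^s(F_{t(\be)})$ over cylinders inside $S(I_u)$ is \emph{exactly} $\mathcal{H}^s(F_v\cap S(I_u))$ up to an error tending to $0$, i.e.\ letting $k\to\infty$ just returns the quantity you are trying to bound; and the unweighted sum $\sum_{\be\in E_v^k}r_{\be}^s$ converges to an entry of a Perron eigenvector of $\bA(s)$, not to $|I_v|^s=1$. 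The reverse inequality $\mathcal{H}^s(F_v\cap J)\leq |J|^s$ is a genuine density theorem: the cited results characterise $1/\mathcal{H}^s(F_u)$ as a supremum of ratios $\mathcal{H}^s(F_w\cap J)/|J|^s$ over intervals $J$ and vertices $w$, so that $\mathcal{H}^s(F_u)=1$ forces all such ratios to be at most $1$. That convex-density argument cannot be replaced by the subadditivity estimate you give.

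Your deduction of (b) from (a) is essentially sound: the difference $(F_v\cap S(I_u))\setminus S(F_u)$ is $\mathcal{H}^s$-null by (a), every point of $F_v$ has positive local $\mathcal{H}^s$-measure by self-similarity and Theorem \ref{dimension}, and since $a_S=S(0)$ and $b_S=S(1)$ already lie in $S(F_u)$ any offending point lies in the open interval $(a_S,b_S)$, so your interval $J$ can be chosen inside $S(I_u)$. There is no need to ``re-run the upper bound'' on $J$: the set $F_v\cap J$ is contained in the null difference set, so it is null directly, contradicting positive local density. One further small point: the lemma is stated under the OSC, not the CSSC, so appeals to disjoint cylinders and to well-defined gap lengths should be replaced by the measure-theoretic fact that distinct level-$k$ pieces meet in $\mathcal{H}^s$-null sets.
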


\begin{lem}
\label{2GlemT}
Let $\bigl( V, E^*, i, t, r, ((\mathbb{R},\left| \ \  \right|))_{v \in V}, (S_e)_{e \in E^1} \bigr)$ be an $n$-vertex ($n\geq 2$, OSC) IFS, defined on the unit interval, with attractor $(F_u)_{u \in V}$ and $s=\dimH F_u$. Let $S:\mathbb{R} \to \mathbb{R}$ and $T:\mathbb{R} \to \mathbb{R}$ be two distinct contracting similarities such that $S(F_u)\subset F_w$ and $T(F_v)\subset F_w$, for some, not necessarily distinct, $u,v,w \in V$. Suppose that $\mathcal{H}^s(F_u)=\mathcal{H}^s(F_v)=1$.

Then exactly one of the following three statements occurs
\begin{align*}
&\textup{(a)}  \quad S(I_u)\cap T(I_v)=\emptyset, \textrm{ which implies } S(F_u)\cap T(F_v)=\emptyset, \\
&\textup{(b)}  \quad S(I_u)\subset T(I_v), \textrm{ which implies } S(F_u)\subset T(F_v), \\
&\textup{(c)}  \quad T(I_v)\subset S(I_u), \textrm{ which implies } T(F_v)\subset S(F_u).
\end{align*}  
\end{lem}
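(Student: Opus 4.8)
The plan is to push everything through Lemma~\ref{2GlemO}, so that the assertion reduces to a question about the relative position of the two closed intervals $S(I_u)$ and $T(I_v)$ on the line. Applying Lemma~\ref{2GlemO}(b) to $S$ (using $S(F_u)\subset F_w$ and $\mathcal H^s(F_u)=1$) gives $S(F_u)=F_w\cap S(I_u)$, and applying it to $T$ gives $T(F_v)=F_w\cap T(I_v)$. Hence $S(F_u)\cap T(F_v)=F_w\cap\bigl(S(I_u)\cap T(I_v)\bigr)$, so $S(I_u)\cap T(I_v)=\emptyset$ immediately forces $S(F_u)\cap T(F_v)=\emptyset$; likewise $S(I_u)\subset T(I_v)$ forces $S(F_u)=F_w\cap S(I_u)\subset F_w\cap T(I_v)=T(F_v)$, and symmetrically for $T(I_v)\subset S(I_u)$. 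Thus all three ``implies'' clauses are automatic, and the lemma reduces to the statement that exactly one of the three relations $S(I_u)\cap T(I_v)=\emptyset$, $S(I_u)\subset T(I_v)$, $T(I_v)\subset S(I_u)$ holds.

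The easy direction is that these relations are pairwise incompatible: (a) excludes (b) and (c) since the latter need a non-empty intersection, and (b) together with (c) would give $S(I_u)=T(I_v)$, hence $S(0)=T(0)$, $S(1)=T(1)$, hence $S=T$ (the similarities being non-reflecting, as throughout the paper), contradicting $S\neq T$. For existence of at least one relation I would enumerate the mutual positions of two closed intervals on $\mathbb R$: disjoint (case (a)); one nested in the other (cases (b), (c)); sharing exactly one endpoint; or properly overlapping, i.e.\ after relabelling $S(I_u)=[a_S,b_S]$ and $T(I_v)=[a_T,b_T]$ with $a_S<a_T<b_S<b_T$. So it only remains to rule out the last two configurations.

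The hard part is excluding proper overlap. Suppose $a_S<a_T<b_S<b_T$. Then $a_T=T(0)\in T(F_v)\subset F_w$ lies in the interior of $S(I_u)$, so by the reduction above $\alpha:=S^{-1}(a_T)\in F_u\cap(0,1)$; symmetrically $b_S=S(1)\in F_w$ lies in the interior of $T(I_v)$, so $\beta:=T^{-1}(b_S)\in F_v\cap(0,1)$. Intersecting the two descriptions of $F_w$ with the common interval $[a_T,b_S]$ gives
\[
S\bigl(F_u\cap[\alpha,1]\bigr)\;=\;F_w\cap[a_T,b_S]\;=\;T\bigl(F_v\cap[0,\beta]\bigr),
\]
so the right end-portion $F_u\cap[\alpha,1]$ and the left end-portion $F_v\cap[0,\beta]$ are similar copies of one another, and moreover $F_w\cap[a_S,b_T]=S(F_u)\cup T(F_v)$. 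From here I would argue exactly as in the proofs of Theorems~\ref{P2M} and~\ref{P2Q}: iterate, applying the invariance equations for $F_u$ and $F_v$ and Lemma~\ref{2GlemO}(b) to the sub-copies that arise, peeling off boundary level-$1$ pieces, and so obtain an expression of a portion of $F_v$ as a union of copies of $F_u$ with a strictly decreasing number of maps at each stage, which is impossible. The shared-endpoint case $b_S=a_T$ is handled the same way: the common point lies in $F_w$ and, since $1$ is a limit point of $F_u$ from the left (the rightmost level-$k$ interval of $F_u$ shrinks to $\{1\}$) and $0$ is a limit point of $F_v$ from the right, it is a two-sided limit point of $F_w$ sitting in the interior of a level-$1$ interval of one of the copies, which again starts the descent; alternatively one simply reads (a) as ``$S(I_u)\cap T(I_v)$ has empty interior'', harmless since Lemma~\ref{2GlemO}(a) makes single points $\mathcal H^s$-null.

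The main obstacle is this last step. Plain additivity of $\mathcal H^s$ across the overlap is perfectly self-consistent and yields no contradiction on its own, so one must genuinely exploit the self-similar structure, and the descent has to be set up with care because — unlike in Theorems~\ref{P2M} and~\ref{P2Q} — there is no ambient path or cycle to walk along: one must track which vertex each sub-copy sits at and the alternation between left and right end-portions (in the setting of Theorem~\ref{P2T}, Condition~(2) supplies the hypothesis $\mathcal H^s=1$ at the neighbouring vertices that keeps Lemma~\ref{2GlemO} applicable to those sub-copies). A possibly cleaner alternative is to compare largest gap lengths: the largest gap of $S(F_u)$ has length $r_S\max G_u$ and that of $T(F_v)$ has length $r_T\max G_v$ (with $r_S,r_T$ the ratios of $S,T$), and in the overlap region these two scaled gap systems would have to coincide, which with $S\neq T$ is contradictory — provided one first shows that a full-sized gap really does fall inside the overlap, which is itself the delicate point of that route.
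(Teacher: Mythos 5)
Your reduction of the three ``implies'' clauses to Lemma~\ref{2GlemO}(b), and your observation that (a), (b), (c) are pairwise exclusive (with $S\neq T$ ruling out simultaneous nesting), are correct and are exactly how one would begin. The genuine gap is the core step: ruling out a proper overlap $a_S<a_T\leq b_S<b_T$ (and the shared-endpoint case). Neither of the two routes you sketch is carried out. The descent argument of Theorems~\ref{P2M} and~\ref{P2Q} does not transfer here: in those proofs the quantity that strictly decreases is the number of maps in an assumed finite IFS representation of $F_u$, whereas here $S$ and $T$ are just two similarities and there is no finite family whose cardinality can be driven below $1$; you acknowledge the absence of an ambient path to walk along but do not supply a substitute. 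The gap-length alternative likewise stalls at the point you yourself flag (showing a full-sized gap lands in the overlap).

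More importantly, the claim that ``plain additivity of $\mathcal H^s$ across the overlap is perfectly self-consistent and yields no contradiction'' is wrong, and it causes you to discard the argument that actually proves the lemma (this is how \cite[Theorem 4.1]{Paper_FengWang} and \cite[Lemmas 7.1, 7.3]{Paper_GCB_KJF}, from which the paper says the omitted proof generalises, proceed). Lemma~\ref{2GlemO}(a) gives $\mathcal H^s(S(F_u))=(b_S-a_S)^s$ and $\mathcal H^s(T(F_v))=(b_T-a_T)^s$; the convex density bound for (graph-directed) self-similar measures gives $\mathcal H^s(F_w\cap J)\leq |J|^s$ for every interval $J$, hence $\mathcal H^s\bigl(S(F_u)\cap T(F_v)\bigr)\leq (b_S-a_T)^s$ and $\mathcal H^s\bigl(F_w\cap[a_S,b_T]\bigr)\leq (b_T-a_S)^s$. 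Since $F_w\cap[a_S,b_T]=S(F_u)\cup T(F_v)$ by Lemma~\ref{2GlemO}(b), inclusion--exclusion yields
\begin{equation*}
(b_T-a_S)^s \;\geq\; (b_S-a_S)^s+(b_T-a_T)^s-(b_S-a_T)^s,
\end{equation*}
which contradicts the strict concavity of $t\mapsto t^s$ for $0<s<1$ (writing $p=a_T-a_S$, $q=b_S-a_T$, $\rho=b_T-b_S$ with $p,\rho>0$, concavity gives $(p+q)^s+(q+\rho)^s>q^s+(p+q+\rho)^s$). This also disposes of the shared-endpoint case ($q=0$). So the measure argument is not self-consistent with a proper overlap; it is precisely the missing contradiction, and without it (or a completed descent) your proof does not close.
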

To prove the next lemma we adapt part of the proof of \cite[Theorem 4.1]{Paper_FengWang} so that it applies to $n$-vertex IFSs. We use $\#A$ for the number of elements in a (finite) set $A$.
\begin{lem}
\label{2GlemU}
Let $\bigl( V, E^*, i, t, r, ((\mathbb{R},\left| \ \  \right|))_{v \in V}, (S_e)_{e \in E^1} \bigr)$ be an $n$-vertex ($n\geq 2$, CSSC) IFS, defined on the unit interval, with attractor $(F_u)_{u \in V}$ and $s=\dimH F_u$. Let $S: \mathbb{R} \rightarrow \mathbb{R}$ be a contracting similarity such that $S(F_u) \subset F_v$ for some, not necessarily distinct, $u,v \in V$. Suppose that the number of edges in the directed graph is minimal and  that $\mathcal{H}^s(F_u)=\mathcal{H}^s(F_{t(e)})=1$, for all $e \in E_v^1$. 

Then $S(I_u)$ is contained in a level-$1$ interval of $F_v$, that is $S(I_u)\subset S(I_{t(e)})$ for some $e \in E_v^1$. 
\end{lem}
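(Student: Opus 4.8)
The plan is to argue by contradiction, mimicking the structure of the proof of Lemma~\ref{P2nospan} but replacing the gap-length argument (which fails here because Condition~(2) of Theorem~\ref{P2Q} has been dropped in favour of the Hausdorff-measure hypotheses) with a measure-theoretic argument built on Lemmas~\ref{2GlemO} and \ref{2GlemT}. Suppose for contradiction that $S(I_u)$ is not contained in any level-$1$ interval of $F_v$. Since $S(F_u)\subset F_v$ and the level-$1$ intervals of $F_v$ are the sets $S_e(I_{t(e)})$ for $e\in E_v^1$ (which are pairwise disjoint by the CSSC and whose union of attractor-pieces covers $F_v$ by the invariance equation \eqref{Invariance equation3}), the interval $S(I_u)$ must meet at least two of these level-$1$ intervals, say it spans a gap between them.

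First I would apply Lemma~\ref{2GlemO}: since $\mathcal{H}^s(F_u)=1$, part~(b) gives $S(F_u)=F_v\cap S(I_u)$, and part~(a) gives $\mathcal{H}^s(S(F_u))=|S(I_u)|^s=r^s$. The key point is then to count, via $s$-dimensional Hausdorff measure, the contributions to $F_v\cap S(I_u)$ coming from the various level-$1$ pieces $S_e(F_{t(e)})$ of $F_v$ that $S(I_u)$ overlaps. For each $e\in E_v^1$, apply Lemma~\ref{2GlemT} with the two similarities $S$ (sending $F_u$ into $F_v$) and $S_e$ (sending $F_{t(e)}$ into $F_v$): we are in case (a), (b), or (c). Case (b), $S(I_u)\subset S_e(I_{t(e)})$, is exactly the conclusion we want, so we may assume it never occurs; hence for every $e$ either $S(I_u)\cap S_e(I_{t(e)})=\emptyset$ or $S_e(I_{t(e)})\subsetneq S(I_u)$, and the latter happens for at least two edges $e$ (since $S(I_u)$ spans a gap). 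Then $F_v\cap S(I_u)$ is the disjoint union $\bigcup\{S_e(F_{t(e)}):S_e(I_{t(e)})\subset S(I_u)\}$, so taking $\mathcal{H}^s$ and using Lemma~\ref{2GlemO}(a) again (applied to each $S_e$, legitimate because $\mathcal{H}^s(F_{t(e)})=1$ for all $e\in E_v^1$ by hypothesis) yields
\begin{equation*}
r^s=\mathcal{H}^s(S(F_u))=\sum_{e:\,S_e(I_{t(e)})\subset S(I_u)}\mathcal{H}^s(S_e(F_{t(e)}))=\sum_{e:\,S_e(I_{t(e)})\subset S(I_u)} r_e^s.
\end{equation*}

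The contradiction should now come from the minimality of the number of edges in the directed graph, which is exactly what this hypothesis is for. The relation $r^s=\sum r_e^s$ over a sub-collection of at least two edges leaving $v$, combined with $S(F_u)=\bigcup S_e(F_{t(e)})$ over that same sub-collection (a union of genuine pieces of the attractor), says that the similarity $S$ can be "factored": composing with $S^{-1}$ exhibits $F_u$ as a union involving the sets $S^{-1}S_e(F_{t(e)})$, which one can splice into the original graph IFS in place of a single edge incident to $v$, producing another $n$-vertex IFS with the same attractor but strictly fewer edges. I expect the main obstacle to be making this last "edge-reduction" step fully rigorous: one must check that the spliced system is still a bona fide $n$-vertex CSSC IFS defined on the unit interval (strongly connected, at least two edges out of each vertex, contraction ratios in $(0,1)$, convex pieces disjoint) and genuinely has fewer edges, so that minimality is violated. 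A cleaner alternative, if splicing is awkward, is to derive the contradiction directly: since $S(I_u)$ spans a gap $[p,q]$ between two consecutive level-$1$ intervals of $F_v$, the point $q\in F_v$ lies in $S(F_u)=F_v\cap S(I_u)$ and is a left-endpoint of a level-$1$ interval, so it is either $0$-type or has $F_v$-points arbitrarily close on the right but none on the left within distance $q-p$; pulling back by $S$ forces a corresponding internal gap of $F_u$ of length $(q-p)/r$ adjacent to a point of $F_u$, and comparing with the structure of $F_u$ (again via an $\mathcal{H}^s$ count near that point, using $\mathcal{H}^s(F_u)=1$) contradicts the CSSC or the minimality. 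I would present the splicing version as primary and remark that minimality is precisely the hypothesis that rules out the non-trivial factorisation.
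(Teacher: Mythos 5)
Your primary route is essentially the paper's own proof: if $S(I_u)$ meets at least two level-$1$ intervals of $F_v$, the trichotomy of Lemma \ref{2GlemT} forces each such $S_e(I_{t(e)})$ inside $S(I_u)$, so $S(F_u)=\bigcup S_e(F_{t(e)})$ over those edges, and replacing them by a single edge $f$ from $v$ to $u$ with $S_f=S$ yields an IFS with the same attractor and strictly fewer edges, contradicting minimality. The detour through Lemma \ref{2GlemO} and the identity $r^s=\sum r_e^s$ is correct but unnecessary --- the set equality from Lemma \ref{2GlemT} already suffices for the splicing step, which is exactly how the paper argues.
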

\begin{proof}
As the number of edges in the associated directed graph, $\# E^1$,  is minimal, any other $\bigl( V, {E^*}^\prime, i^\prime, t^\prime, r^\prime, ((\mathbb{R},\left| \ \  \right|))_{v \in V}, ({S_e}^\prime)_{e \in {E^1}^\prime} \bigr)$ which has the same attractor  $(F_u)_{u \in V}$, will be such that $\# E^1\leq \#{E^1}^\prime$. Let $m$ be the number of level-$1$ intervals of $F_v$ that intersect $S(I_u)$, that is
\begin{equation*}
m =  \# \left\{ S_e(I_{t(e)}) : \ S(I_u) \cap S_e(I_{t(e)}) \neq \emptyset, \ e \in E_v^1 \right\}.
\end{equation*}
If $m\geq 2$ then by Lemma \ref{2GlemT} 
\begin{equation*}
S(F_u)= \bigcup_{ \substack{\ e \in E_v^1 \\ S(I_u) \cap S_e(I_{t(e)}) \neq \emptyset }}S_e(F_{t(e)}).
\end{equation*}
From Equation \eqref{Invariance equation3}
\begin{equation*}
 F_v = \bigcup_{ \substack{e\in E_v^1} } S_e(F_{t(e)}) = S(F_u) \cup \bigcup_{ \substack{e\in E_v^1 \\ S(I_u) \cap S_e(I_{t(e)}) = \emptyset} }S_e(F_{t(e)}) = \bigcup_{ \substack{e\in {E_v^1}^\prime} }S_e(F_{t(e)}),
\end{equation*} 
where ${E_v^1}^\prime$ is obtained from $E_v^1$ by replacing those edges $e$ for which $S(I_u) \cap S_e(I_{t(e)}) \neq \emptyset$ by a single edge $f$, from $v$ to $u$, with associated similarity $S_f = S$. Since $m\geq 2$, $\# {E_v^1}^\prime < \# E_v^1$ and we now have an $n$-vertex IFS with the same attractor $(F_u)_{u \in V}$ but where the number of edges in the directed graph is strictly less than $\#E^1$. This contradicts the minimality of $\#E^1$. Therefore $m=1$ and $S(I_u)\subset S(I_{t(e)})$ for some $e \in E_v^1$. 
\end{proof}
Lemma \ref{2GlemU} replaces the use of Lemma \ref{P2nospan} in the proof of Subsection \ref{five4} and so proves Theorem \ref{P2T}. 

\section{Conclusion}\label{six}

We conclude with a practical discussion of how Condition (2) of Theorem  \ref{P2Q} might be weakened. The proof of Theorem \ref{P2M} relies on Lemmas \ref{P2O} and \ref{P2P} but the arguments used in their proofs are not easy to adapt to general $n$-vertex ($n\geq 2$, CSSC) IFSs, so what follows is a consideration of other ways in which progress may be made. The sole purpose of Condition (2) is to prevent similarity maps of components of attractors from spanning the gaps between level-$1$ intervals as we showed in  Lemma \ref{P2nospan}. In fact most of the work of this paper has been in this direction (see also Lemmas \ref{P2O}, \ref{P2P}  and \ref{2GlemU}) because this is the main ingredient in the proofs of Subsections \ref{three1} and \ref{five4}.  

\begin{figure}%[htb]
\begin{center}
\includegraphics[trim = 9mm 10mm 10mm 15mm, clip, scale =0.74]{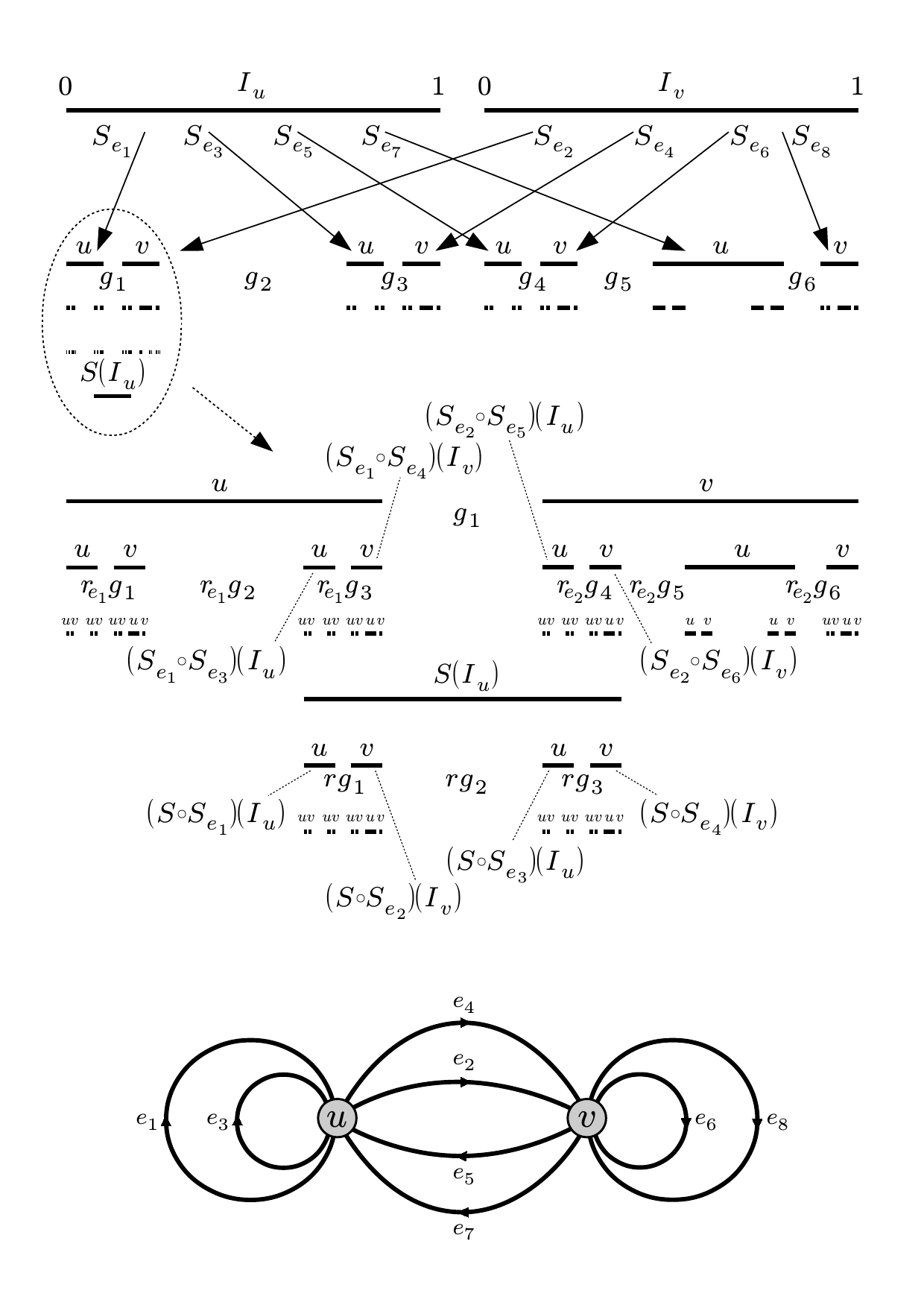}
\end{center}
\caption{A $2$-vertex IFS, defined on the unit interval, with attractor $(F_u,F_v)$, $F_u \not\subset F_v$ (and $F_v \not\subset F_u$) where $S$ is a contracting similarity such that $S(F_u)\subset F_u$ and $S(I_u)$ spans a gap between level-$1$ intervals}
\label{CounterExample}
\end{figure}

An interesting example of a standard (CSSC) IFS where a similarity map of the attractor is shown to span the gap between two level-$1$ intervals is given by Feng and Wang in \cite[Example 6.2]{Paper_FengWang}. This example would seem to be fairly special as it was produced independently in \cite[Theorem 6.2]{Paper_Elkes_Keleti_Mathe}. As illustrated in Figure \ref{CounterExample}, we can adapt this example to create a $2$-vertex IFS, defined on the unit interval, with attractor $(F_u,F_v)$, $F_u \not\subset F_v$ (and $F_v \not\subset F_u$) where $S$ is a contracting similarity such that $S(F_u)\subset F_u$ and $S(I_u)$ spans a gap between level-$1$ intervals. The contracting similarities are defined as
\begin{equation}
\label{S_{e_i}}
\begin{split}
S_{e_1}(x)=r_{e_1}x, \ S_{e_2}(x)=r_{e_2}x+r_{e_1}&+g_1, \ S_{e_3}(x)=r_{e_3}x+r_{e_1}+g_1+r_{e_2}+g_2, \\
S_{e_4}(x)=r_{e_4}x+1-r_{e_4}, \ S_{e_5}&(x)=r_{e_5}x, \ S_{e_6}(x)=r_{e_6}x+r_{e_5}+g_4, \\
S_{e_7}(x)=r_{e_7}x+r_{e_5}+g_4+&r_{e_6}+g_5, \ S_{e_8}(x)=r_{e_8}x+1-r_{e_8}, \\
S(x)=rx+r_{e_1}^2+&r_{e_1}g_1+r_{e_1}r_{e_2}+r_{e_1}g_2.
\end{split}
\end{equation}
The parameters in \eqref{S_{e_i}} are strictly positive and subject to the constraints
\begin{equation}
\label{constraints}
\begin{split}
r_{e_1}+g_1+r_{e_2}+g_2+r_{e_3}+g_3+r_{e_4}=1, \\
r_{e_5}+g_4+r_{e_6}+g_5+r_{e_7}+g_6+r_{e_8}=1.
\end{split}
\end{equation}
The level-$1$ intervals and gap lengths illustrated in Figure \ref{CounterExample} are drawn for a specific set of parameters which take the values 
\begin{equation}
\label{parameters}
\begin{split}
r=r_{e_8}=r_{e_i}=\frac{2}{20}, &\text{ for } 1\leq i \leq 6, \ r_{e_7}=\frac{7}{20}, \\ 
g_1=g_3=g_4=\frac{1}{20}, \ g_2&=\frac{10}{20}, \ g_5=\frac{4}{20} \text{ and } g_6=\frac{2}{20}. 
\end{split}
\end{equation}
Clearly it would be useful to know just how special this set of parameter values is. To this end we now consider the parameter values to be arbitrary. Assuming $S$ maps the four level-$1$ intervals of $F_u$ to four level-$2$ intervals of $F_u$, as shown in Figure \ref{CounterExample}, it follows that 
 \begin{gather*}
(S_{e_1}\circ S_{e_3})(I_u)=(S\circ S_{e_1})(I_u), \quad (S_{e_1}\circ S_{e_4})(I_v)=(S\circ S_{e_2})(I_v), \\
(S_{e_2}\circ S_{e_5})(I_u)=(S\circ S_{e_3})(I_u), \quad (S_{e_2}\circ S_{e_6})(I_v)=(S\circ S_{e_4})(I_v),
\end{gather*}
so that $r=r_{e_3}$ and 
 \begin{gather*}
r_{e_1}r_{e_4}=r_{e_2}r_{e_3}, \ r_{e_2}r_{e_5}=r_{e_3}^2, \ r_{e_2}r_{e_6}=r_{e_3}r_{e_4}, \\ 
g_1=r_{e_3}g_2, \ r_{e_1}g_3=r_{e_3}g_1, \ r_{e_2}g_4=r_{e_3}g_3. 
\end{gather*}
Here we have $6$ equations in $10$ unknowns which reduce to $6$ equations in $6$ unknowns if we fix the gap lengths $g_1,g_2,g_3,g_4$. This determines the following unique solution
\begin{equation}
\label{solution}
\begin{split}
r_{e_1}=\frac{g_1^2}{g_2g_3}, \ r_{e_2}=\frac{g_1g_3}{g_2g_4}, \ r_{e_3}=\frac{g_1}{g_2}, \ r_{e_4}=\frac{g_3^2}{g_2g_4}, \ r_{e_5}=\frac{g_1g_4}{g_2g_3}, \ r_{e_6}=\frac{g_3}{g_2}.
\end{split}
\end{equation}
Putting this solution into the constraint equations of \eqref{constraints} gives
\begin{gather}
\label{constraint1}
\frac{g_1^2}{g_2g_3}+g_1+\frac{g_1g_3}{g_2g_4}+g_2+\frac{g_1}{g_2}+g_3+\frac{g_3^2}{g_2g_4}=1, \\
\label{constraint2}
\frac{g_1g_4}{g_2g_3}+g_4+\frac{g_3}{g_2}+g_5+r_{e_7}+g_6+r_{e_8}=1.
\end{gather}
Any $2$-vertex (CSSC) IFS, with the similarities defined in \eqref{S_{e_i}} and with the associated directed graph as illustrated at the bottom of Figure \ref{CounterExample}, is obviously very special if its parameters satisfy the constraints of Equations \eqref{constraint1} and \eqref{constraint2}, thereby permitting $S(I_u)$ to span the gap between two level-$1$ intervals. To be clear about this suppose we choose  $g_1=g_3=g_4$, as is the case with the parameters of \eqref{parameters} that are used in Figure \ref{CounterExample}, then from \eqref{solution}, $r_{e_i}= g_1/g_2$ for $1 \leq i \leq 6$ and Equation \eqref{constraint1} reduces to a quadratic
\begin{equation*}
g_2^2 + (2g_1 - 1)g_2 + 4g_1=0
\end{equation*}
which has real solutions if  $g_1 \leq (20 - \sqrt{384})/8 \approx 0.0505$. This brief analysis shows that if we put $g_1=g_3=g_4=\alpha$, for $\alpha > (20 - \sqrt{384})/8$, then we can ensure that no such similarity $S$ can exist. 

We now describe another way in which the situation shown in Figure \ref{CounterExample} can be seen to be exceptional. We have labelled some level-$k$ intervals in Figure \ref{CounterExample}, $u$ or $v$ depending on whether an interval is a similarity map of $I_u$ or $I_v$ respectively. The symbols $u$ and $v$ for the level-$2$ and level-$3$ intervals immediately above $S(I_u)$ line up with the symbols $u$ and $v$ for the image level-$1$ and level-$2$ intervals immediately below $S(I_u)$. Matching up the symbols $u$ and $v$ places a very strong restriction on the possible directions for the similarities. To see this, suppose we were to swap the order of the level-$1$ intervals of $F_v$ so that the associated symbols read $vuuv$ instead of $uvuv$ (this can be done by changing the edge $e_5$ in the directed graph to a loop at $v$ and the loop $e_6$ at $v$ to an edge from $v$ to $u$). Then the level-$2$ intervals above $S(I_u)$ would now read $uvvu$ which doesn't match up with the level-$1$ intervals below $S(I_u)$ which read $uvuv$. In terms of similarities we would have $(S\circ S_{e_3})(I_u)=(S_{e_2}\circ S_{e_5})(I_v)$ which implies $F_u \subset F_v$ and $(S\circ S_{e_4})(I_v)=(S_{e_2}\circ S_{e_6})(I_u)$ which implies $F_v \subset F_u$ so that $F_u = F_v$ and the system reduces to a standard IFS which is excluded. In this case no such mapping $S$ can exist and there is the distinct advantage that we haven't needed to consider gap lengths or similarity ratios to prove it. 

Futher evidence that similarity maps like $S$ are special is provided by Elekes, Keleti and M\'{a}th\'{e} in \cite{Paper_Elkes_Keleti_Mathe}. They prove in 
 \cite[Lemma 4.8]{Paper_Elkes_Keleti_Mathe}, that for any given standard ($1$-vertex, self-similar, SSC) IFS, defined on $\mathbb{R}^m$, with attractor $F$, there are only a finite number of contracting similarities $S$, $S(F) \subset F$, such that $S(F)$ intersects at least two first generation elementary pieces. This means that for any standard (CSSC) IFS, defined on the unit interval, with attractor $F$, there are only a finite number of contracting similarities $S$, $S(F) \subset F$, such that $S(I)$ spans the gap between two level-$1$ intervals. It is reasonable to expect this result to carry over to any $n$-vertex ($n\geq 2$, CSSC) IFS, defined on the unit interval, so that there will only be at most a finite number, $N$, of similarities which span the gaps between level-$1$ intervals. The work of this paper suggests that in most cases we may expect $N=0$.

Taken together the above observations mean that we should be able to weaken Condition (2) of Theorem \ref{P2Q} and apply the proof of Subsection \ref{five4} to many other $n$-vertex IFSs, as long as the next conjecture is true.

\begin{conj}
\label{conjecture1}
Let $\bigl( V, E^*, i, t, r, ((\mathbb{R},\left| \ \  \right|))_{v \in V}, (S_e)_{e \in E^1} \bigr)$ be an $n$-vertex (CSSC) IFS, defined on the unit interval, with attractor $(F_u)_{u \in V}$. Let $S: \mathbb{R} \rightarrow \mathbb{R}$ be a contracting similarity such that $S(F_u) \subset F_v$ for some, not necessarily distinct, $u,v \in V$.

Then there exist $j,k \in \mathbb{N}$ with for each $\be \in E_u^j$ 
\begin{equation*}
(S\circ S_\be)(I_{t(\be)}) = S_\Bf(I_{t(\Bf)}),
\end{equation*}
for some $\Bf \in E_v^k$. In other words, $S$ maps the level-$j$ intervals of  $F_u$ to level-$k$ intervals of  $F_v$, for some $j,k \in \mathbb{N}$.  
\end{conj}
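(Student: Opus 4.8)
### Proof proposal for Conjecture \ref{conjecture1}

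The plan is to leverage the Hausdorff-measure machinery that is already available for the OSC case, reducing the general CSSC statement to a question about commensurability of contraction ratios. First I would normalise: by passing to the IFS with a minimal number of edges (as in Lemma \ref{2GlemU}) and, if necessary, renormalising each $F_v$ so that $\mathcal{H}^s(F_v) = 1$ at every vertex — the CSSC forces the OSC, so Theorem \ref{dimension} gives $0 < \mathcal{H}^s(F_v) < \infty$, and the renormalisation is a harmless rescaling that does not affect whether $S$ maps levels to levels. With this reduction in place, Lemma \ref{2GlemU} already tells us that $S(I_u)$ sits inside a single level-$1$ interval $S_{e}(I_{t(e)})$ of $F_v$; writing $T_1 = S_e^{-1}\circ S$ we get a contracting similarity with $T_1(F_u)\subset F_{t(e)}$, and we may iterate. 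The natural object is therefore the address sequence: applying Lemma \ref{2GlemU} repeatedly produces an infinite word $e_1 e_2 e_3\cdots$ (an element of $E_v^{\mathbb N}$) together with contracting similarities $T_k = (S_{e_1}\circ\cdots\circ S_{e_k})^{-1}\circ S$, each with $T_k(F_u)\subset F_{t(e_k)}$ and with $r_{T_k} = r_S / (r_{e_1}\cdots r_{e_k})\to 0$.

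The heart of the argument is then to show that for some $k$ the similarity $T_k$ is actually \emph{equal} to a similarity of the form $S_\Bf$ along a finite path $\Bf \in E_{t(e_k)}^{*}$ whose image is a level-$j$ interval of $F_u$ — equivalently, that $T_k$ maps $\{0,1\}$, i.e.\ the endpoints of $I_u$, to a pair of points that are the endpoints of some $S_\Bf(I_{t(\Bf)})$. The key step here is to analyse the \emph{left endpoint} $T_k(0)$ and the ratio $r_{T_k}$ simultaneously. Since $T_k(F_u)\subset F_{t(e_k)}$, the point $T_k(0)$ lies in $F_{t(e_k)}$ and has points of $F_{t(e_k)}$ arbitrarily close on its right, so $T_k(0)$ is a left endpoint of every sufficiently deep level interval containing it; chasing which level interval this is, and using that $r_{T_k}$ shrinks geometrically while the set of possible ratios of level-$j$ intervals is $\{r_\Bf : \Bf\in E^j\}$, one wants to force $r_{T_k} = r_\Bf$ and $T_k(0) = S_\Bf(0)$ for some finite path $\Bf$ of the appropriate length. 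The argument should run parallel to the proof of Lemma \ref{P2subset1}, where exactly this kind of endpoint-matching forced $S_{e_2}=S_{e_4}$: the obstruction to matching is precisely that $T_k$ could span a gap, and Lemma \ref{2GlemU} has ruled that out.

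The main obstacle I anticipate is the \emph{uniformity in $k$}: a priori the depth $j$ needed to catch $T_k$ as a path-similarity could grow with $k$, and there is no obvious reason the process terminates rather than producing a genuinely new infinite address not of the form coming from a finite path. Controlling this seems to require a finiteness input of the flavour of \cite[Lemma 4.8]{Paper_Elkes_Keleti_Mathe} — that there are only finitely many contracting similarities $S$ with $S(F)\subset F$ whose image meets two first-generation pieces — extended to the $n$-vertex CSSC setting, which is exactly the extension discussed informally in Section \ref{six}. Granting such a finiteness statement, the set of ``exceptional'' similarities is finite, every other similarity maps levels to levels by a pigeonhole / compactness argument on the address tree, and the exceptional ones can be handled by hand; without it, one is left with the delicate endpoint-and-ratio bookkeeping sketched above, and I would expect that to be where the real work lies.
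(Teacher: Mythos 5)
This statement is a \emph{conjecture}: the paper offers no proof of it, and in fact Section \ref{six} is devoted to explaining why it is hard, so your proposal has to stand entirely on its own. It does not. The fatal step is the very first reduction, ``renormalising each $F_v$ so that $\mathcal{H}^s(F_v)=1$.'' The quantity $\mathcal{H}^s(F_v)/\left|C(F_v)\right|^s$ is invariant under rescaling (both numerator and denominator scale by $\lambda^s$), so you cannot simultaneously keep the system defined on the unit interval and force $\mathcal{H}^s(F_v)=1$; the hypothesis $\mathcal{H}^s(F_v)=1$ in Lemmas \ref{2GlemO}--\ref{2GlemU} is a genuine geometric density condition (measure attaining the diameter bound), which the paper explicitly calls ``very strong'' and which fails for general CSSC systems --- indeed the paper notes that $S(F_u)=F_v\cap S(I_u)$ is \emph{not} true under the CSSC alone and constructs a modification of Figure \ref{CounterExample} with $S(F_u)\subsetneqq F_u\cap S(I_u)$. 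Without that hypothesis Lemma \ref{2GlemU} is unavailable, and your claim that ``$S(I_u)$ sits inside a single level-$1$ interval'' is directly contradicted by the paper's own example in Figure \ref{CounterExample}, where a CSSC $2$-vertex system on the unit interval admits an $S$ with $S(F_u)\subset F_u$ whose image spans the gap between level-$1$ intervals. The conjecture is precisely designed to cover such $S$ (in that example $S$ maps level-$1$ intervals to level-$2$ intervals, so the conclusion holds with $j=1$, $k=2$, even though $S$ itself is not a path similarity); any argument that rules out gap-spanning at the outset is proving a false strengthening.

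Two further problems. First, your address-sequence bookkeeping has the ratios going the wrong way: $r_{T_k}=r_S/(r_{e_1}\cdots r_{e_k})$ is strictly \emph{increasing} and bounded above by $1$, not tending to $0$; the iteration therefore terminates after at most $\log r_S/\log(\max_e r_e)$ steps, and (were Lemma \ref{2GlemU} available) would force $S$ to be exactly a path similarity $S_{\Bf}$ --- again too strong, and again incompatible with Figure \ref{CounterExample}. Second, you openly defer the actual content of the conjecture --- handling the finitely many (or not) gap-spanning similarities and proving termination --- to an unproved extension of \cite[Lemma 4.8]{Paper_Elkes_Keleti_Mathe} to the $n$-vertex CSSC setting, which is exactly the open problem the paper is describing. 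So the proposal is a reasonable survey of the available machinery and of where the difficulty lies, but it is not a proof, and its concrete steps (the $\mathcal{H}^s=1$ normalisation and the blanket use of Lemma \ref{2GlemU}) would fail.
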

A proof of this conjecture would also provide a partial answer to \cite[Question 9.3]{Paper_Elkes_Keleti_Mathe} which asks the following related question. For an attractor $F$ of a standard (self-similar, $1$-vertex, SSC) IFS, defined on $\mathbb{R}^m$, and a similarity $S$ with $S(F)\subset F$, is  $S(F)$ always a finite union of elementary pieces of $F$? Or for $m=1$, is $S(F)$ always a finite union of level-$k$ elementary pieces for some $k\in \mathbb{N}$? 

If we could show that $S(F_u)= F_v\cap S(I_u)$ then this would certainly be of help in approaching a proof of Conjecture \ref{conjecture1}, as it should then be straight forward to show that end-points of level-$j$ intervals are mapped to end-points of level-$k$ intervals and also it would ensure that gap intervals are mapped to other gap intervals. It is certainly the case that $S(F_u)= F_v\cap S(I_u)$ under the OSC with the very strong condition that $\mathcal{H}^s(F_u)=1$, see Lemma \ref{2GlemO}, however this is not the case under the CSSC alone. To see this we can modify the example in Figure \ref{CounterExample} so that $S(F_u)\subsetneqq F_u\cap S(I_u)$. All we need to do is to add three similarities which map $I_u$ into the gaps between $S_{e_1}(I_u)$ and $S_{e_2}(I_v)$, $S_{e_3}(I_u)$ and $S_{e_4}(I_v)$, and $S_{e_5}(I_u)$ and $S_{e_6}(I_v)$. 

On the other hand if a counter-example to Conjecture \ref{conjecture1} can be constructed then new ideas will be needed to make further progress. 

Of course for the specific $2$-vertex example shown in Figure \ref{CounterExample}, with the parameters of \eqref{parameters} and attractor $(F_u,F_v)$ with $F_u \not\subset F_v$ (and $F_v \not\subset F_u$), where $S(I_u)$ does indeed span a gap between level-$1$ intervals, there remains the interesting question as to whether or not $F_u$ is a standard IFS attractor. 

New insights will be needed to answer questions like these.

%\bibliographystyle{amsplain}
 
%\bibliography{P}

\begin{thebibliography}{14}\bibitem{Paper_Ayer_Strichartz}E. Ayer and R. S. Strichartz, \emph{Exact {H}ausdorff measure and intervals of maximum density for {C}antor sets}, Trans. Amer. Math. Soc. \textbf{351} (1999), 3725--3741.\bibitem{Book_Barnsley2}M. F. Barnsley, \emph{Super{F}ractals}, Cambridge University Press, Cambridge, (2006).\bibitem{phdthesis_Boore}G. C. Boore, \emph{Directed {G}raph {I}terated {F}unction {S}ystems}, Ph.D.  thesis, School of Mathematics and Statistics, University of St Andrews, (October  2011),  http://research-repository.st-andrews.ac.uk/handle/10023/2109.\bibitem{Paper_GCB_KJF}G. C. Boore and K. J. Falconer, \emph{Attractors of directed graph {IFS}s that are not standard {IFS} attractors and their {H}ausdorff measure}, Math. Proc. Camb. Phil. Soc. \textbf{154} (2013), 324--349.\bibitem{Book_Edgar2}G. A. Edgar, \emph{Measure, {T}opology, and {F}ractal {G}eometry}, Springer-Verlag, New York, (2000).\bibitem{Paper_Edgar_Mauldin} G. A. Edgar and R. D. Mauldin, \emph{Multifractal Decompositions of Digraph Recursive Fractals}, Proc. London Math. Soc. (3) \textbf{65} (1992), 604--628.\bibitem{Paper_Elkes_Keleti_Mathe}M.~Elekes, T. Keleti and A.~M\'{a}th\'{e}, \emph{Self-similar and self-affine  sets: measure of the intersection of two copies}, Ergodic Theory Dynam.  Systems \textbf{30} (2010), 399--440.\bibitem{Book_KJF2}K.~J. Falconer, \emph{Fractal {G}eometry, {M}athematical {F}oundations and  {A}pplications}, John Wiley, Chichester, 2nd Ed. 2003.\bibitem{Paper_KJF4} K. J. Falconer, \emph{The {H}ausdorff dimension of some fractals and attractors of overlapping construction}, J. Stat. Phys. \textbf{47}, Nos. 1/2, (1987), 123--132.\bibitem{Paper_FengWang}De-J. Feng and Y. Wang, \emph{On the structures of generating iterated function systems of {C}antor sets}, Adv. Math. \textbf{222} (2009), 1964--1981.\bibitem{Paper_Hutchinson}J.~Hutchinson, \emph{Fractals and self-similarity}, Indiana Univ. Math. J.  \textbf{30} (1981), 713--747.\bibitem{Paper_Marion}J. Marion, \emph{Mesure de {H}ausdorff d'un fractal \`a similitude interne}, Ann. sc. Qu\'ebec. \textbf{10} (1986), 51--84.\bibitem{Paper_Mauldin_Williams}R. D. Mauldin and S. C. Williams, \emph{{H}ausdorff dimension in graph directed constructions}, Trans. Amer. Math. Soc. \textbf{309} (1988), 811--829.\bibitem{Paper_ZhouFeng}Z. Zhou and L. Feng, \emph{Twelve open problems on the exact value of the {H}ausdorff measure and on topological entropy: a brief survey of recent results}, IOP. Nonlinearity. \textbf{17} (2004), 493--502.\end{thebibliography}

\end{document}